\definecolor{red}{rgb}{1.00,0.00,0.00}
\definecolor{blue}{rgb}{0.00,0.00,0.63}
\definecolor{black}{rgb}{0.00,0.00,0.00}
\definecolor{purple}{rgb}{0.00,1.00,0.00}
\definecolor{pink}{rgb}{0.95,0.01,0.08}
\newtheorem{theorem}{Theorem}[section]
\newtheorem{lemma}{Lemma}[section]
\newtheorem{prop}{Proposition}[section]
\newtheorem{remark}{Remark}[section]
\newtheorem{defn}{Definition}[section]
\def\var{\varepsilon}
\def\bma#1\ema{{\allowdisplaybreaks\begin{aligned}#1\end{aligned}}}
\numberwithin{equation}{section}
\begin{document}

\author{Qingyou He, Ling-Yun Shou \& Leyun Wu}

\title{{\LARGE \textbf{Hele-Shaw limit of  chemotaxis-Navier-Stokes flows}}}


\date{}
\maketitle
\begin{abstract}
This paper investigates the connection between the chemotaxis-Navier-Stokes system with  porous medium type nonlinear diffusion  and the Hele-Shaw problem in $\mathbb{R}^d$ ($d\geq2$). First, we prove the global-in-time existence of weak solutions for the Cauchy problem of the chemotaxis-Navier-Stokes system with general initial data, uniformly in the diffusion range $m\in [3,\infty)$. In particular, this existence result does not require additional structural assumptions on the chemotactic sensitivity $\chi(c)$ and the oxygen consumption rate $f(c)$, and remains valid in arbitrary dimensions. Then, we rigorously justify the Hele-Shaw limit for this system as $m\rightarrow\infty$, showing the convergence to a free boundary problem of Hele-Shaw type, where the bacterium (cell) diffusion is governed by the stiff pressure law. Moreover, the complementarity relation characterizing the limiting bacterium (cell) pressure via a degenerate elliptic equation is verified by a novel application of the Hele-Shaw framework.
\end{abstract}

\noindent{\bf Keywords.} Chemotaxis-Navier-Stokes system;  Nonlinear diffusion; Global existence; Hele-Shaw problem.
\\
2020 {\bf MSC.} 35A01; 35B40; 35B44; 35K55; 76D27; 92C17.

\section{Introduction}



In biological environments, bacteria (cells) typically inhabit viscous fluids, where both the bacteria (cells) and the chemical signals they produce are transported by the surrounding medium. Moreover, the fluid dynamics may be affected by gravitational forces resulting from bacterium (cell) aggregation. A notable example is the bacterium {\emph{Bacillus subtilis}} suspended in water, where experiments have demonstrated the spontaneous emergence of spatial patterns from an initially homogeneous distribution. To simulate bacterium (cell)–fluid interactions, Tuval et al. \cite{PANS2005} first  introduced the chemotaxis-Navier–Stokes system in $\mathbb{R}^d$ ($d\geq2$):
\begin{equation}\label{eq1-1}
\left\{
\begin{aligned}
&\partial_t n+u\cdot \nabla n=\Delta n^m-
\nabla \cdot (n\chi(c) \nabla c),\\
&\partial_t c+u \cdot \nabla c=\Delta c-nf(c),\\
&\partial_t u +(u \cdot \nabla) u+\nabla \Pi=\Delta u-n\nabla \phi,\\
&\nabla \cdot u=0.
	\end{aligned}
 \right.
\end{equation}
Here $n=n(t,x): \mathbb{R}^{+}\times\mathbb{R}^d\rightarrow \mathbb{R}^{+}$, $c=c(t,x):\mathbb{R}^{+}\times\mathbb{R}^d\rightarrow \mathbb{R}^+,$ $u=u(t,x):\mathbb{R}^{+}\times\mathbb{R}^d\rightarrow \mathbb{R}^d$ and $\Pi=\Pi(t,x):\mathbb{R}^{+}\times\mathbb{R}^d\rightarrow  \mathbb{R}$ denote the density of the bacterium (cell) population, the concentration of the oxygen (chemotactic signal), the fluid velocity and the pressure of the fluid, respectively.  $m>1$ is the parameter associated with porous medium type nonlinear slow diffusion.  $\chi(c)$ and $f(c)$ stand for the chemotactic sensitivity and the oxygen consumption rate, respectively.  $\phi=\phi(x)$ denotes the  potential function produced by different physical mechanisms. We are concerned with the Cauchy problem for \eqref{eq1-1} supplemented with the initial data
\begin{align} 
(n,c,u)(x,0)=(n_0,c_0,u_0)(x),\quad \quad x\in \mathbb{R}^d.\label{d}
\end{align}
Note that the motion of the bacterium (cell) density can be described by a conservative equation
\begin{align}
\partial_t n+\nabla\cdot ( nV)=0,\label{mass}
 \end{align}
coupled with Darcy's law
\begin{align}
    V=u-\nabla P+\chi(c)\nabla c,\label{darcy}
\end{align}
where  $V:\mathbb{R}^{+}\times\mathbb{R}^d\to\mathbb{R}^d$ is the velocity field for bacterium (cell) motion, and 
\begin{align}
P:=\frac{m}{m-1}n^{m-1}\label{Pin}
\end{align}
is the  bacterium (cell) pressure.

The main purpose of this work is to establish a new bacterium (cell) diffusion mechanism, namely, the stiff pressure law, in the context of the chemotaxis-fluid interaction system \eqref{eq1-1} as the diffusion exponent $m\to\infty$. More precisely, the stiff pressure, solving a degenerate elliptic equation, exists in the bacterium (cell) saturation region where the bacterium (cell) density is equal to $1$ and suppresses bacterium (cell) density larger than $1$. To this end, we first verify the global existence of weak solutions for the Cauchy problem \eqref{eq1-1}-\eqref{d} with $m\geq3$ (see Theorem~\ref{thm1}). Furthermore, we establish the Hele-Shaw (or incompressible) limit  as the diffusion exponent $m\to\infty$  (see Theorem~\ref{thm2}). This limit solves a free boundary problem of Hele-Shaw type with a complementarity relation. These findings extend the mathematical theory of coupled chemotaxis-fluid systems and offer new insights into modeling interactions between biological and fluid dynamics under nonlinear diffusion or in Hele-Shaw flow regimes.


\vspace{2mm}




\noindent
{\textbf{Chemotaxis-fluid equations: Linear diffusion.}} 
The classical chemotaxis-Navier-Stokes equations with linear diffusion (i.e., \eqref{eq1-1} with $m=1$) have been studied extensively, yielding many significant results. Regarding the Cauchy problems, Duan, Lorz and Markowich \cite{duan2010} established the global well-posedness and convergence rates of classical solutions for the chemotaxis–Navier–Stokes equations in $\mathbb{R}^3$, provided that the initial perturbation is small in $H^3(\mathbb{R}^3)$. Furthermore, considering the chemotaxis-Stokes system (i.e., \eqref{eq1-1} without $u\cdot\nabla u$ and with $m=1$) instead of the chemotaxis–Navier–Stokes system in $\mathbb{R}^2$, the authors \cite{duan2010} demonstrated the global existence of weak solutions to the corresponding Cauchy problem under either weak external forcing or small substrate concentration.  
Subsequently, Liu and Lorz \cite{liu2011} removed the previous weak external forcing or small substrate concentration and obtained global weak solutions for chemotaxis-Navier-Stokes equations under the structural conditions
\begin{equation}
\begin{aligned}
\chi(c),f(c),f'(c),\chi'(c)\geq0,\quad f(0)=0,\quad \frac{\chi'(c)f(c)+\chi(c)f'(c)}{\chi(c)}>0,\quad \frac{d^2}{dc^2}\Big(\frac{f(c)}{\chi(c)}\Big)<0,\label{struct2}
\end{aligned}
\end{equation}
and the initial assumptions
\begin{equation}\label{initial1}
\begin{aligned}
n_0(1+|x|+|\log n_0|)\in L^1(\mathbb{R}^d),\quad c_0\in L^1(\mathbb{R}^d)\cap L^{\infty}(\mathbb{R}^d),\quad \nabla \Psi(c_0)\in L^2(\mathbb{R}^d),\quad u_0\in L^2(\mathbb{R}^d),
\end{aligned}
\end{equation}
where $\Psi(c)$ is defined by $\Psi(c):=\int_0^c\sqrt{\frac{\chi(s)}{f(s)}}\,ds$.  Chae, Kang and Lee \cite{ckl2014} presented some blow-up criteria for the
local solutions to the chemotaxis-Navier-Stokes system in $\mathbb{R}^d$ ($d=2,3$) and proved the global existence of classical solutions in $\mathbb{R}^2$ under quite different assumptions from \eqref{struct2} that for some constant $\nu$,
\begin{equation}
\begin{aligned}
\chi(c),f(c),f'(c),\chi'(c)\geq0, \quad \sup_{c}|\chi(c)-\nu f(c)|\ll 1.\label{struct3}
\end{aligned}
\end{equation}
Duan, Li and Xiang \cite{rxz2017} obtained the existence and uniqueness of weak solutions and classical solutions in the two-dimensional case when $\|n_0\|_{L^1(\mathbb{R}^2)}$ is suitably small. Lorz  \cite{lorz2012} showed the global existence of weak solutions in $\mathbb{R}^3$ with small $L^{\frac{3}{2}}$-norm of initial data. The uniqueness of weak solutions has been addressed by Zhang and Zheng; cf.~\cite{zz2014}. Chae, Kang and Lee \cite{ckl2013} obtained the local existence of regular solutions with $(n_0,u_0)\in H^{s}(\mathbb{R}^d)$ and $c_0\in H^{s+1}(\mathbb{R}^d)$ with $s\geq3$ and $d=2,3$. In \cite{ckl2014},  they also presented some blow-up criteria and constructed global solutions for the three-dimensional chemotaxis-Stokes equations under some smallness  of initial data. For the Cauchy problem of the self-consistent chemotaxis-fluid system, Carrillo, Peng and Xiang  \cite{cpx2023} established several extensibility criteria of classical solutions in two and three dimensions. In the same paper, they also presented the global weak solution with small $c_0$ for the three-dimensional flow. We also refer to the recent work \cite{heshouwu} where the authors relaxed the smallness condition for global well-posedness and time decay such that the possibly large oscillations are allowed. 

When the spatial domain is considered to be a bounded domain with a smooth boundary, the local existence of weak solutions is established by Lorz \cite{Lotz10}. Winkler \cite{MR2876834} obtained a unique global classical solution in the two-dimensional case and proved the global existence of weak solutions in the three-dimensional Stokes case when $\chi(c)=1$ and $f(c)=c$, which cannot be covered by  \eqref{struct2} due to $\frac{d^2}{dc^2}\big(\frac{f(c)}{\chi(c)}\big)=0$. Such solutions of the two-dimensional version, shown in \cite{wm2014,Zhang2d},  converge to a unique spatially homogeneous steady state  at an exponential rate as the time tends to infinity. The global solvability of weak solutions to the three-dimensional chemotaxis-Navier–Stokes
system was obtained by Winkler \cite{wm2016} as the limit of smooth solutions for suitably regularized problems. We also refer to \cite{px2018,px2019} concerning the global existence or stabilization of solutions to the initial boundary value problems in different domains. Such global weak solutions
do become smooth eventually but may develop singularities prior to such ultimate regularization (see \cite{MR2876834,wm2017}). Recently, Winkler \cite{wm2023JEMS} established Leray’s structure theorem to characterize the possible extent of unboundedness phenomena.

Recent studies revealed that fluid flows can significantly influence the bacterium (cell) aggregation behaviors for Keller–Segel type equations, where the oxygen equation is governed by an elliptic type Poisson equation. For instance, Kiselev and Ryzhik \cite{Kiselev1} analyzed the impact of specific fluid flows on spreading properties and additional absorbing reactions in broadcast spawning models. Moreover, Kiselev and Xu \cite{Kiselev3} developed a framework in which introducing a suitably chosen incompressible velocity field via a simple transport mechanism prevented the blow-up phenomena that would otherwise occur in the classical Keller–Segel system. 
Additionally, enhanced dissipation and blow-up suppression in the two-dimensional chemotaxis-fluid systems near the Couette flow were investigated by Zeng, Zhang and Zi \cite{ZZZ-2021-JFA} and He \cite{He-2023-SIAM}. Lai, Wei and Zhou \cite{LWZ2023} showed the global existence of free-energy solutions for the two-dimensional system with critical and subcritical mass $8\pi$. Recently, the impressive work  \cite{HuKiselevYao1} showed that  the bacterium (cell) aggregation was suppressed via buoyancy in the general fluid context.

\noindent
{\textbf{Chemotaxis-fluid equations: Nonlinear diffusion.}} When $m>1$, the nonlinear bacterium (cell) diffusion mechanisms in \eqref{eq1-1} cause essential mathematical difficulties due to the degeneracy of $\Delta n^m$ near $n=0$. Under $m\in (\frac{3}{2},2]$, Francesco, Lorz and Markowich \cite{map2010} constructed global   weak solutions for the chemotaxis-Navier-Stokes system \eqref{eq1-1} in either a bounded domain or the whole space in $\mathbb{R}^2$. In the three-dimensional case, they also established a similar global existence result for the chemotaxis-Stokes system. Using \emph{a priori} estimates derived from the Lyapunov functional under the conditions \eqref{struct2} and \eqref{initial1}, Liu and Lorz \cite{liu2011} improved the range of $m$ to $(\frac{4}{3},2]$. With the same assumptions, Duan and Xiang \cite{DuanIMRN12} addressed the optimal condition on   $m>1$ ensuring global existence.  Tao and Winkler \cite{ym2012} constructed global weak solutions in a two-dimensional bounded domain for $\chi(c)=1$, $f(c)=c$ and arbitrary $m>1$. They \cite{wm2013AIHP} further established the global existence and large-time asymptotics of locally bounded solutions to the initial boundary-value problem in three dimensions and studied its large-time asymptotics for $m>\frac{8}{7}$. Then, the global existence and boundedness of weak solutions for the initial boundary value problem were justified by Winkler \cite{wm2015} and Zhang and Li \cite{ZhangLi15}. One of the key ingredients in \cite{liu2011,DuanIMRN12} is the use of the functional
\begin{equation}
\begin{aligned}
E(t):=\int_{\mathbb{R}^d} \Big( n\log{n}+n\sqrt{1+|x|^2} +\frac{1}{2}|\nabla\Psi(c)|^2+\frac{1}{2}|u|^2\Big)\,dx,\label{entropy}
\end{aligned}
\end{equation}
under the conditions \eqref{struct2} and \eqref{initial1}. Here the space-weighted term $n\sqrt{1+|x|^2} $ plays a role in ensuring the lower bound of the functional $E(t)$ due to the $n\log{n}$ term.

A natural question is whether the global existence of weak solutions holds for suitably large $m$ and general $\chi(c), f(c)$ without the structural condition \eqref{struct2} and the spatial weight assumption for $n_0$.

\vspace{1mm}

\noindent
{\textbf{Hele-Shaw limit.}} 
The Hele-Shaw (incompressible) limit for the Patlak-Keller-Segel model (with Newtonian attractive potential) was first established in \cite{CKY2018} using a combination of viscosity solution and gradient flow. Recently,  for the Keller-Segel system, even in the presence of a growth term, general attractive kernel and volume-filling effect, the Hele-Shaw limits were proved via weak solution techniques; cf.~\cite{HLP2023,HLP2022,HZ2024}. Perthame et al.  \cite{PERTHAME2014} first studied the Hele-Shaw asymptotics for the porous medium type reaction-diffusion equation modeling tumor growth, which leads to a significant body of research in this direction~\cite{DAVID2021,DPSV2021,DS2020,GKM2022,KM2023,KT2018,LX2021,PV2015}. Representative studies on the Hele-Shaw limit for tumor growth models using weak solutions were conducted in~\cite{DAVID2021,PERTHAME2014,PQTV2014}. The incompressible (Hele-Shaw) limit for tumor growth incorporating convective effects was rigorously analyzed in \cite{DS2021}, and the decay rates on the diffusion exponent \( m \) were further explored in \cite{DDB2022,NAF2024}.  For tumor growth models governed by Brinkmann’s pressure law,  the convergence for density and pressure  was established in \cite{KT2018} through viscosity solution methods. The authors   \cite{DPSV2021,DS2020} proved the Hele-Shaw limit for the two-species case via compactness techniques. The Hele-Shaw asymptotics for porous medium equations with non-monotonic or non-local reaction terms were obtained through the viewpoint of the obstacle problem in \cite{GKM2022}. Furthermore, for tissue growth incorporating autophagy, the existence of weak solutions and the Hele-Shaw limit  were analyzed in \cite{LX2021}.  In the case of porous medium equations with drift, the singular limit was studied using viscosity solution methods in \cite{KPW2019}. The convergence of the free boundary in the incompressible limit of tumor growth with convective effects was recently achieved in \cite{ZT2014}. In addition, the non-symmetric traveling wave solutions and the rigorous derivation for a Hele-Shaw type tumor growth model with nutrient supply were provided in \cite{FHLZ2024}.  

To the best of our knowledge, there is no result concerning both the uniform regularity estimates with respect to $m$ and the rigorous justification of the Hele-Shaw limit for \eqref{eq1-1}.

\vspace{2mm}

\noindent
{\textbf{Our contributions.}} 
In the present paper, we build a new bacterium (cell) diffusion mechanism (stiff pressure law) in the context of chemotaxis-fluid interaction equations \eqref{eq1-1}. 
\begin{itemize}
\item On the one hand, we prove a new global existence theorem for the chemotaxis-Navier-Stokes system \eqref{eq1-1} with any $m\in[3,\infty)$. Different from earlier significant works {\rm{\cite{liu2011,rxz2017}}}, our result relaxes the conditions \eqref{struct2} and \eqref{struct3} for $\chi(c), f(c)$ and replaces the space-weighted assumption $n_0(1+|x|+|\log n_0|)\in L^1(\mathbb{R}^d)$ in \eqref{initial1} by $n_0\in L^{m-1}(\mathbb{R}^d)$. In particular, we obtain the result on the global existence of weak  solutions in the presence of the fluid convection effect $u\cdot\nabla u$ in \eqref{eq1-1} for arbitrary  dimensions $d\geq2$, while previous studies focused on the chemotaxis-Stokes system when $d=3$. Towards this end, inspired by \eqref{mass} and \eqref{darcy}, we establish \emph{a priori} estimates using the energy functional
\begin{equation}
\begin{aligned}
\mathcal{E}(t):=\int_{\mathbb{R}^d} \Big(\frac{1}{m-2}P +\frac{1}{2}|\nabla c|^2+\frac{1}{2}|u|^2\Big)\,dx,\label{entropynew}
\end{aligned}
\end{equation}
where  the effective ``pressure'' $P:=\frac{m}{m-1}n^{m-1}$ is used to replace the $n\log{n}$ term in \eqref{entropy}.  In addition, we show that the corresponding a priori estimates are uniform in $m\geq3$.  
\item On the other hand, our work provides the first rigorous justification of the Hele-Shaw limit for the complex chemotaxis-fluid interaction flows and finds a novel approach to verify the complementarity relation. Specifically, departing from classical methods, we introduce a special test function acting on the established Hele–Shaw system \eqref{eq1-4}-\eqref{ilr} to justify this relation. This particularly allows us to weaken the spatially weighted assumptions required in the analysis such as Lemma \ref{ka}.

To highlight the novelty of this approach, we briefly review existing strategies. The authors \cite{CKY2018} combined viscosity solution with gradient flow  to establish the Hele–Shaw limit for the Keller–Segel model with Newtonian potential when the initial data is a patch function. This result was later extended to the same model with general initial data via a weak solution framework in \cite{HLP2022}. Specifically, the estimates in \cite{HLP2022} such as the Aronson–B\'enilan estimate, the $L^1$ estimate for the time derivative of the pressure and the $L^3$ estimate of the pressure gradient collectively yield the $L^2$ strong convergence of the pressure gradient, thereby providing a sufficient condition for the complementarity relation. More recently, for chemotaxis systems with growth or volume-filling effect, inspired by tissue growth models \cite{noemi2023, LX2021}, the authors \cite{HLP2023,HZ2024} leveraged the special structure of the porous medium type equation to achieve $L^2$ strong convergence of the pressure gradient to verify the complementarity relation, without the required additional regularities as in \cite{HLP2022}.  In addition, as a comparison, we also derive the complementarity relation  (see Appendix~\ref{appendix1}) by additionally proving the $L^2$ strong compactness of the gradient of the $m$-th power density $n_m^m$ via exploiting the porous medium type structure as in \cite{HLP2023,HZ2024}. 
\end{itemize}


\noindent\textbf{Organization of this article.} In the forthcoming section \ref{sect:result}, we state our main results (Theorems \ref{thm1} and \ref{thm2}). The proof of Theorem \ref{thm1} is presented in Section \ref{section2}, which consists of the construction of approximate solutions and the compactness argument. In Section \ref{sec:hs}, we first establish the additional uniform regularity estimates of global weak solutions and then justify rigorously the Hele-Shaw limit  as $m\rightarrow\infty$.  Furthermore, we verify the complementarity relation via the obtained Hele-Shaw framework. Another proof of the complementarity relation based on the compactness techniques \cite{HLP2023,HZ2024} is carried out in Appendix \ref{appendix1}. 
Appendix~\ref{App:proofapp} presents some supplementary  calculations for the part of weak solutions (Section \ref{section2}).
Some useful technical results are collected in Appendix \ref{appendix2}.

\section{Main results}\label{sect:result}

Before stating our main results, we give the definition of global weak solutions. Throughout this paper, for any time $T>0$, we denote  
$$Q_T:=\mathbb{R}^d\times (0,T).$$

\begin{defn}[Weak solution]\label{ws}
 A triple $(n ,c ,u )\in L_{\rm loc}^1(Q_T)$ is called a global weak solution for the Cauchy problem \eqref{eq1-1}-\eqref{d} of the chemotaxis-Navier-Stokes system with the initial data $(n_0 ,c_0 ,u_0 )\in L_{\rm loc}^1(\mathbb{R}^d)$ if for any given time $T>0$, the following properties hold{\rm:}  
 \begin{itemize}
 \item $ n |u|,\ n ^m,\ \chi(c )n |\nabla c|,\ c |u|,\ nf(c),\ |u|^2$ are locally integrable in $Q_T$.
 \item For any scalar function $\varphi\in \mathcal{C}_0^\infty([0,T)\times\mathbb{R}^d)$ and  $d$-vector valued function $\psi\in [\mathcal{C}_0^\infty([0,T)\times\mathbb{R}^d)]^d$ satisfying $\nabla\cdot \psi=0$, we have 
\begin{align*}
&\iint_{Q_T}\big(n \partial_t \varphi+n  u \cdot \nabla\varphi + n ^m\Delta\varphi +\chi(c)n \nabla c \cdot\nabla\varphi \big) \,dxdt+\int_{\mathbb{R}^d}n_0\varphi(0,x)\,dx=0,\\
&\iint_{Q_T}\big(c \partial_t \varphi+c  u \cdot \nabla\varphi+ c \Delta \varphi -n f(c ) \varphi\big) \,dxdt+\int_{\mathbb{R}^d}c_0\varphi(0,x)\,dx=0,\\
&\iint_{Q_T}\big(u \cdot \partial_t \psi+(u \otimes u) : \nabla \psi+u\cdot \Delta \psi-n \nabla\phi \cdot \psi\big) \,dxdt+\int_{\mathbb{R}^d}u_0\cdot \psi(0,x)\,dx=0,\\
&\iint_{Q_T}u \cdot\nabla \varphi\, dxdt=0.
\end{align*}

\end{itemize}
\end{defn}

\paragraph{Assumptions.} We suppose that $\chi,f$ and $\phi$ satisfy 
\begin{align}\label{ca}\tag{${\rm{H}}_1$}
\chi,\ f\in W^{1,\infty}(\mathbb{R}_+),\quad\quad f\geq0,\quad\quad  \phi\in W^{1,\infty}(\mathbb{R}^d),
\end{align}
and the initial data $(n_0,c_0,u_0)$ has the properties
\begin{equation}\label{a1}\tag{${\rm{H}}_2$}
\left\{
\begin{aligned}
& n_0,\ c_0 \geq0,\quad\quad\quad\quad\quad~~~  \nabla\cdot u_0=0,\\
&\|n_{0}\|_{L^{1}(\mathbb{R}^d)}\leq C_0,\quad\quad ~ \quad\|c_{0}\|_{L^1(\mathbb{R}^d)}\leq C_0,~~ \quad\quad\quad  c_0\leq c_B,\\
&\|n_{0}\|_{L^{m-1}(\mathbb{R}^d)}\leq C_0,\quad \quad ~\|c_0\|_{H^1(\mathbb{R}^d)}\leq C_0,\quad \quad\quad~ \|u_{0}\|_{L^2(\mathbb{R}^d)}\leq C_0,
\end{aligned}
\right.
\end{equation}
where $c_B$ and $C_0$ are two positive constants independent of $m$.

\begin{theorem}[Global existence]\label{thm1}
Let $d\geq 2$ and $m\geq3$, and assume that \eqref{ca} and \eqref{a1} hold. Then the Cauchy problem \eqref{eq1-1}-\eqref{d} admits a global weak solution $(n,c,u)$ with $n, c\geq0$ in the sense of Definition~\ref{ws} such that, for any time $T>0$,
\begin{equation}\label{r1}
\left\{
\begin{aligned}
&n\in L^{\infty}(0,T;L^1(\mathbb{R}^d)\cap L^{m-1}(\mathbb{R}^d)),\\
&c\in L^{\infty}(0,T;L^{1}(\mathbb{R}^d)\cap L^{\infty}(\mathbb{R}^d)\cap H^1(\mathbb{R}^d))\cap L^2(0,T; H^2(\mathbb{R}^d)),\\
& u\in L^{\infty}(0,T;L^{2}(\mathbb{R}^d))\cap L^{2}(0,T;H^{1}(\mathbb{R}^d)),
\end{aligned}
\right.
\end{equation}
and
\begin{equation}\label{basep}
\begin{aligned}
&\sup\limits_{t\in[0,T]} \mathcal{E}(t)+\int_0^T\Big(\|\nabla P(t)\|_{L^2(\mathbb{R}^d)}^2+\frac{1}{2}\|\Delta c\|_{L^2(\mathbb{R}^d)}^2+\frac{1}{2}\|\nabla u\|_{L^2(\mathbb{R}^d)}^2\Big) \,dt\\
&\quad \leq C\Big(\| c_0\|_{L^2(\mathbb{R}^d)}^2+\frac{1}{m-2}\|n_0\|_{L^{m-1}(\mathbb{R}^d)}^{m-1}+\|u_0\|_{L^2(\mathbb{R}^d)}^2+T\Big),
\end{aligned}
\end{equation}
where $P$ and $\mathcal{E}(t)$ are defined in \eqref{Pin} and \eqref{entropynew}, respectively, and $C>0$ is a constant independent of $m$. In particular, the corresponding a priori bounds in \eqref{r1} are independent of $m$. 
\end{theorem}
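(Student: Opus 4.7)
My strategy is to construct approximate solutions for the regularized system, derive a priori bounds on the functional $\mathcal{E}(t)$ defined in \eqref{entropynew} with constants \emph{independent of $m$}, and pass to the limit via Aubin--Lions type compactness. The central novelty is the replacement of the classical entropy $\int n\log n\,dx$ used in \cite{liu2011,DuanIMRN12} by the pressure-based quantity $\int P/(m-2)\,dx$. This object is automatically nonnegative (so no spatial weight on $n_0$ is required) and its prefactor $1/(m-2)$ is precisely calibrated to cancel the $m$-dependent constants produced by testing the porous-medium diffusion by a power of $n$, yielding constants that are $m$-uniform.

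\textbf{Step 1 (Approximation).} I would regularize in several layers: mollify the initial data to be smooth and compactly supported, replace $\Delta n^m$ by $\Delta n^m + \varepsilon\Delta n$ (so the $n$-equation becomes uniformly parabolic), and localize on expanding balls $B_R\subset\mathbb{R}^d$ with compatible boundary data. For each fixed $(\varepsilon,R)$, classical solutions exist by combining a Galerkin scheme for the Navier--Stokes component with Schauder / Leray--Schauder theory for the $(n,c)$ pair. Standard maximum-principle arguments preserve nonnegativity of $n$ and $c$, deliver the upper bound $c\leq c_B$ (using $n f(c)\geq 0$), and mass conservation $\|n(t)\|_{L^1}=\|n_0\|_{L^1}$ follows by integrating the $n$-equation.

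\textbf{Step 2 (Master energy identity --- the heart of the proof).} I test the $n$-equation by $\frac{m}{m-2}n^{m-2}$, the $c$-equation by $-\Delta c$, and the $u$-equation by $u$, then sum. Using $\nabla\cdot u=0$ to kill $u\cdot \nabla n$ and integrating by parts, the left-hand side becomes $\frac{d}{dt}\mathcal{E}(t)+\|\nabla P\|_{L^2}^2+\|\Delta c\|_{L^2}^2+\|\nabla u\|_{L^2}^2$, where the identity $\frac{m}{m-2}\int n^{m-2}\Delta n^m\,dx = -\|\nabla P\|_{L^2}^2$ shows that the $m$-factors cancel exactly as designed. The surviving coupling terms are $\int \chi(c)\nabla P\cdot\nabla c\,dx$ (from the chemotactic flux, via the analogous identity $\frac{m}{m-2}\int n^{m-2}\nabla\cdot(\chi(c)n\nabla c)\,dx=-\int\chi(c)\nabla P\cdot\nabla c\,dx$), the cubic term $-\int \partial_i u_j\partial_i c\partial_j c\,dx$ (after rewriting $\int u\cdot \nabla c\,\Delta c\,dx$ by integration by parts and $\nabla\cdot u=0$), the reaction term $\int n f(c)\Delta c\,dx$, and the potential forcing $-\int n\nabla\phi\cdot u\,dx$. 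The first is absorbed by Young's inequality using $\|\chi\|_{L^\infty}<\infty$, keeping half of $\|\nabla P\|_{L^2}^2$ on the left-hand side. The cubic term is handled by Gagliardo--Nirenberg with the maximum-principle bound $\|c\|_{L^\infty}\leq c_B$, and the reaction term by $\|f\|_{L^\infty}<\infty$ and mass conservation. The most delicate coupling is the forcing; I control it via the interpolation
\begin{equation*}
\|n\|_{L^2}\leq \|n\|_{L^1}^{\alpha}\|n\|_{L^{m-1}}^{1-\alpha},\qquad \alpha=\frac{m-3}{2(m-2)},
\end{equation*}
which is well-defined precisely when $m\geq 3$. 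Combined with the elementary bound $X^{1/(m-2)}\leq X+1$ for all $X\geq 0$ (valid since $1/(m-2)\leq 1$), this yields $\|n\|_{L^2}^2\leq C(\mathcal{E}+1)$ with constants independent of $m$, so Young's inequality produces $|\int n\nabla\phi\cdot u\,dx|\leq \tfrac{1}{2}\|u\|_{L^2}^2+C(\mathcal{E}+1)$. Gronwall then delivers \eqref{basep} uniformly in $m$ and the regularization parameters.

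\textbf{Step 3 (Compactness and limit passage).} The bound \eqref{basep} together with $\|n\|_{L^1}=\|n_0\|_{L^1}$, $0\leq c\leq c_B$, and $\|c\|_{L^1}\leq \|c_0\|_{L^1}$ yields weak-$\ast$ convergence in the spaces of \eqref{r1}. Estimates on $\partial_t n$, $\partial_t c$, $\partial_t u$ in negative Sobolev spaces follow directly from the equations. Applying the Aubin--Lions--Simon lemma on each bounded subdomain gives strong local convergence of $c$ and $u$ in $L^2$ and of $n$ in $L^p_{\rm loc}$ for appropriate $p$ (using $\nabla P\in L^2$), sufficient to pass to the limit in every nonlinearity $n^m$, $\chi(c)n\nabla c$, $n f(c)$, $u\otimes u$ in the weak formulation of Definition~\ref{ws}. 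A diagonal procedure $R\to\infty$, $\varepsilon\to 0$ produces the desired global weak solution on $\mathbb{R}^d$.

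\textbf{Expected main obstacle.} The decisive technical point is Step 2, specifically keeping all constants $m$-uniform without invoking the spatial weight $n_0\sqrt{1+|x|^2}\in L^1$ or the structural conditions \eqref{struct2}--\eqref{struct3}. The $\frac{1}{m-2}P\leftrightarrow \frac{m}{m-2}n^{m-2}$ calibration is the key algebraic observation that makes this possible, and the interpolation used to tame $\int n\nabla\phi\cdot u\,dx$ is exactly what forces the threshold $m\geq 3$; relaxing this hypothesis would seem to require either a different energy functional or additional structure on $\phi$.
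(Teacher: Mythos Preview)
Your overall architecture---regularize, derive $m$-uniform bounds on the pressure-based functional $\mathcal{E}$, then extract limits via Aubin--Lions---matches the paper's, and your identification of the calibration $\tfrac{1}{m-2}\int P\,dx \leftrightarrow$ testing by $\tfrac{m}{m-2}n^{m-2}$ as the central algebraic device is exactly right. The interpolation controlling $\|n\|_{L^2}$ and the explanation of why $m\geq 3$ is the threshold are also correct.

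There is, however, one genuine gap in your ``master energy identity'' as written. When you bound the cubic term $-\int \partial_i u_j\,\partial_i c\,\partial_j c\,dx$ via Gagliardo--Nirenberg and $\|c\|_{L^\infty}\leq c_B$, you unavoidably produce on the right-hand side a contribution of size $Cc_B^2\|\nabla u\|_{L^2}^2$ (or equivalently $Cc_B^2\|\Delta c\|_{L^2}^2$), with a constant you cannot make small. Since the left-hand side carries $\|\nabla u\|_{L^2}^2$ and $\|\Delta c\|_{L^2}^2$ each with coefficient $1$, absorption fails as soon as $c_B$ is large---and the hypotheses place no smallness restriction on $c_B$. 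A simple fix is to weight the $u$-test by a large constant $\Lambda>Cc_B^2$; the forcing term $\Lambda\int n\nabla\phi\cdot u$ then just enlarges the Gr\"onwall constant. But as written, the step does not close.

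The paper avoids this altogether by proceeding \emph{sequentially} rather than through a single coupled identity: first test the $c$-equation by $c$ to obtain $\|\nabla c\|_{L^2(Q_T)}^2\leq \|c_0\|_{L^2}^2$ outright (no Gr\"onwall); feed this into the $P$-estimate to control $\|\nabla P\|_{L^2(Q_T)}$ and hence $\|n\|_{L^\infty_tL^{m-1}_x}$; use the resulting uniform bound on $\|n\|_{L^2}$ to close the $u$-estimate; and only then test the $c$-equation by $-\Delta c$, at which point $\int_0^T\|\nabla u\|_{L^2}^2\,dt$ is already known and the cubic term (after a second integration by parts to $-\int\partial_j u^i\,c\,\partial_{ij}c\,dx$) is harmless. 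This ordering is the practical content the paper adds beyond your sketch. Two smaller points: the paper also mollifies the aggregation flux, replacing $\nabla c$ by $\nabla(J_\varepsilon*c)$ in the $n$-equation, which eases the construction of the approximate solutions; and for the strong compactness of $n$ it invokes the Dubinski\u{\i} lemma (powers of $n$ in $H^1$ plus $\partial_t n$ in a negative space) rather than plain Aubin--Lions.
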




\begin{remark}
The fluid pressure $\Pi$ can be solved by the 
elliptic problem 
\begin{align}
   -\Delta \Pi=\nabla\cdot\nabla\cdot(u\otimes u)+\nabla\cdot(n\nabla\phi)\label{pressure} 
\end{align}
in the distributional sense. According to the regularity properties of $n$ and $u$ in \eqref{r1} {\rm(}see Proposition \ref{propuee}{\rm)}, using the elliptic regularity theory yields 
$$
\Pi\in L^{\frac{2(d-1)}{d}}(0,T;L^{\frac{d-1}{d-2}}(\mathbb{R}^d))\quad \text{for}\quad 
 d\geq3\quad  \text{and}\quad  \Pi\in L^{\frac{3}{2}}(0,T;L^3(\mathbb{R}^2))\quad  \text{for}\quad 
 d=2.
$$

\end{remark}

Next, we aim to study the Hele-Shaw limit as the diffusion exponent $m\to\infty$. To this end, we label $(n, c, u, P, \Pi)$ by $(n_m, c_m, u_m, P_m, \Pi_m)$ and rewrite the chemotaxis-Navier-Stokes system \eqref{eq1-1} as
\begin{equation}\label{eqm}
	\begin{cases}
\partial_t n_m+u_m\cdot \nabla n_m=\Delta n_m^m-
\nabla \cdot (n_m\chi(c_m) \nabla c_m),\\
\partial_t c_m+u_m \cdot \nabla c_m=\Delta c_m-n_mf(c_m),\\
\partial_t u_m +(u_m \cdot \nabla) u_m+\nabla \Pi_m=\Delta u_m-n_m\nabla \phi,\\
\nabla \cdot u_m=0,	\end{cases}
\end{equation}
with the initial data 
\[(n_m(x,0),c_{m}(x,0),u_{m}(x,0))=(n_{m,0}(x),c_{m,0}(x),u_{m,0}(x)),\quad x\in\mathbb{R}^d.\]
The  pressure \big($(m-1)$-th power of the density\big) expressed by $P_m:=\frac{m}{m-1}n_m^{m-1}$ plays a central role in the analysis. Indeed, $P_m$ satisfies the equation
\begin{equation}\label{dpe}
\partial_t P_m+u_m\cdot\nabla P_m=(m-1)P_m(\Delta P_m-\nabla \cdot(\chi(c_m)\nabla c_m))+\nabla P_m\cdot(\nabla P_m-\chi(c_m)\nabla c_m).
\end{equation}
Formally, we derive that  the limit $(n_\infty,P_\infty, c_\infty,u_\infty, \Pi_\infty)$ of $(n_m,P_m,c_m,u_m,\Pi_m)$ in $m$ solves a so-called Hele-Shaw type system 
\begin{equation}\label{eq1-4}
	\begin{cases}
\partial_t n_\infty+u_\infty\cdot \nabla n_\infty=\Delta P_\infty -
\nabla \cdot (n_\infty\chi(c_\infty) \nabla c_\infty),\\
\partial_t c_\infty+u_\infty \cdot \nabla c_\infty=\Delta c_\infty-n_\infty f(c_\infty),\\
\partial_t u_\infty +(u_\infty \cdot \nabla) u_\infty+\nabla \Pi_\infty=\Delta u_\infty-n_\infty\nabla \phi,\\
\nabla \cdot u_\infty=0,		
	\end{cases}
\end{equation}
with the initial data 
\begin{align}
(n_\infty,c_\infty, u_\infty)(0,x)=(n_{\infty,0},c_{\infty,0},u_{\infty,0})(x),\label{eq1-4:d}
\end{align}
and the following Hele-Shaw graph relation
\begin{equation}\label{ilr}
0\leq n_\infty\leq1,\quad (1-n_\infty)P_\infty=0,\quad (1-n_\infty)\nabla P_\infty=0.
\end{equation}
Equations \eqref{eq1-4} and \eqref{ilr} give a weak formulation of the Hele-Shaw type free boundary problem.
We need one more complementarity equation to describe the limiting pressure $P_\infty$. We take the limit for the pressure equation~\eqref{dpe} in $m$ and formally derive a degenerate elliptic equation, called the {\emph{complementarity relation}}: 
\begin{equation}\label{cr2}
P_\infty(\Delta P_\infty -\nabla\cdot(\chi(c_\infty)\nabla c_\infty))=0.
\end{equation}

In addition to \eqref{ca} and \eqref{a1}, we impose the following additional assumptions:
\begin{equation}\label{H3}\tag{${\rm{H}}_3$}
\left\{
\begin{aligned}
&\|n_{m,0}\|_{L^{m+1}(\mathbb{R}^d)}^{m+1}\leq C,\\
&\lim_{m\rightarrow\infty}\big( \|n_{m,0}-n_{\infty,0}\|_{L^1(\mathbb{R}^d)}+\|c_{m,0}-c_{\infty,0}\|_{L^1(\mathbb{R}^d)}+\|u_{m,0}-u_{\infty,0}\|_{L^2(\mathbb{R}^d)}\big)= 0,
\end{aligned}
\right.
\end{equation}
where the constant $C>0$ is independent of $m$.

Then, we establish a rigorous justification of the convergence from the chemotaxis-Navier-Stokes system \eqref{eqm} to the  Hele-Shaw type system \eqref{eq1-4}-\eqref{ilr} with the complementarity property~\eqref{cr2} as $m\rightarrow \infty$.


\begin{theorem}[Hele-Shaw limit]\label{thm2}
Let $(n_m,c_m,u_m)$ be a weak solution for the Cauchy problem \eqref{eqm} obtained in Theorem \ref{thm1} with $m\geq \max\{d+1,5\}$, and let the fluid pressure $\Pi_m$ be given by \eqref{pressure}. Set $P_m:=\frac{m}{m-1} n_m^{m-1}$ as the bacterium (cell) pressure. In addition, we define $q\in (1,\infty)$, $p\in [1,\frac{2d}{d-2})$, $(p_1,q_1):=(\frac{2(d-1)}{d},\frac{d-1}{d-2})$ for $d\geq3$ and $(p_1,q_1)=(\frac{3}{2},3)$ for $d=2$. Then, under the assumptions \eqref{ca}, \eqref{a1} and \eqref{H3} on $\chi,\ f,\ \phi$ and the initial data $(n_{m,0},c_{m,0},u_{m,0})$, there exists a limit $(n_\infty,c_\infty,u_\infty,P_\infty,\Pi_\infty)$ such that as $m\rightarrow \infty$, it holds true that 
\begin{equation}\label{uml}
\left\{
\begin{alignedat}{4}
u_m \to u_\infty \quad
&\text{strongly}\quad &\text{in}\quad &L^2(0,T;L^2_{\mathrm{loc}}(\mathbb{R}^d)),\\
c_m \to c_\infty \quad
&\text{strongly}\quad &\text{in}\quad &L^2(0,T;W^{1,p}_{\mathrm{loc}}(\mathbb{R}^d)),\\
\Pi_m \rightharpoonup \Pi_\infty \quad
&\text{weakly}\quad &\text{in}\quad &L^{p_1}(0,T;L^{q_1}(\mathbb{R}^d)),\\
P_m \rightharpoonup P_\infty \quad
&\text{weakly}\quad &\text{in}\quad &L^2(0,T;H^1(\mathbb{R}^d)),\\
n_m^m \rightharpoonup P_\infty \quad
&\text{weakly}\quad &\text{in}\quad &L^2(0,T;H^1(\mathbb{R}^d)),\\
n_m \rightharpoonup n_\infty \quad
&\text{weakly-$\ast$}\quad &\text{in}\quad &L^\infty(0,T;L^q(\mathbb{R}^d)),\\
n_m \to n_\infty \quad
&\text{strongly}\quad &\text{in}\quad &L^2(0,T;\dot{H}^{-1}_{\mathrm{loc}}(\mathbb{R}^d)).
\end{alignedat}
\right.
\end{equation}
Moreover, the limit $(n_\infty,c_\infty, u_\infty,P_\infty,\Pi_\infty)$ satisfies the Hele-Shaw type system \eqref{eq1-4}-\eqref{eq1-4:d} in the sense of distributions, with the Hele-Shaw graph \eqref{ilr} almost everywhere, and the complementarity relation \eqref{cr2} remains valid in the distributional sense.
\end{theorem}

\begin{remark} In general, the free boundary of the Hele-Shaw problem \eqref{eq1-4}-\eqref{ilr} is referred to as the support boundary of the density or the pressure. When the initial cell mass $M(>0)$ is bounded, due to the conservation of mass, the saturation region, where the density equals $1$, is bounded. Since the pressure is supported on the level set of the density $1$, the pressure is compactly supported, which means the existence of the free boundary.
\end{remark}

\begin{remark}
In Theorem \ref{thm1}, the condition 
$m\geq 3$ is used to ensure the global existence of weak solutions. For Theorem \ref{thm2}, we impose the stronger condition $m\geq \max\{d+1,5\}$, which is needed to obtain 
$m$-independent higher integrability estimates (see Lemma \ref{lemma43}). These estimates play a key role in deriving time-derivative bounds (see Lemma \ref{lemma44}) and in the compactness argument for passing to the Hele–Shaw limit. Note that this condition is automatically satisfied in the singular limit 
$m\to \infty$.
\end{remark}

\begin{remark}
In Appendix~\ref{appendix1}, we give an alternative proof of the validity of the complementarity relation \eqref{cr2} by employing the classical method as in {\rm{\cite{HLP2023,HZ2024}}}, under the additional conditions that $\|n_{m,0}\|_{L^{m+3}(\mathbb{R}^d)}^{m+3}$, $\||x|^2n_{m,0}\|_{L^1(\mathbb{R}^d)}$ and $\||x|c_{m,0}\|_{L^2(\mathbb{R}^d)}$ are uniformly bounded with respect to $m$. In particular, we can obtain the strong convergence from $\nabla n_m^m$ to $\nabla P_\infty$ in $L^2(Q_T)$ as $m\rightarrow \infty$ {\rm{(}}see Proposition \ref{sg}{\rm)}.
\end{remark}

\section{Global existence}\label{section2}

This section is devoted to the proof of Theorem \ref{thm1} on the global existence of weak solutions. 
For any $0<\varepsilon<1$, we regularize the initial data as follows
\begin{align}
&(n_{0,\varepsilon},c_{0,\varepsilon}, u_{0,\varepsilon})(x):=(J_{\varepsilon}\ast n_{0},J_{\varepsilon}\ast c_{0},J_{\varepsilon}\ast u_{0})(x),\nonumber
\end{align}
where $J_{\varepsilon}$ denotes the mollifier. By the initial assumptions \eqref{a1}, one knows that $(n_{0,\varepsilon},c_{0,\varepsilon}, u_{0,\varepsilon})$ is smooth for any $0<\varepsilon<1$ and satisfies
\begin{equation}\label{duniform}
\begin{alignedat}{2}
&0\le n_{0,\varepsilon}, \qquad\qquad\qquad
&&\|n_{0,\varepsilon}\|_{L^p(\mathbb{R}^d)}\le \|n_0\|_{L^p(\mathbb{R}^d)}, \qquad 1\le p\le m-1,\\
&0\le c_{0,\varepsilon}\le c_B, \qquad
&&\|c_{0,\varepsilon}\|_{L^p(\mathbb{R}^d)}\le \|c_0\|_{L^p(\mathbb{R}^d)}, \qquad 1\le p\le \infty,\\
&\|\nabla c_{0,\varepsilon}\|_{L^2(\mathbb{R}^d)}\le \|\nabla c_0\|_{L^2(\mathbb{R}^d)}, \qquad
&&\|u_{0,\varepsilon}\|_{L^2(\mathbb{R}^d)}\le \|u_0\|_{L^2(\mathbb{R}^d)}.
\end{alignedat}
\end{equation}
We consider the following approximating equations with artificial viscosity and regularized aggregation: 
\begin{equation}\label{app}
\left\{
\begin{aligned}
&\partial_t n_\varepsilon+u_\varepsilon\cdot \nabla n_\varepsilon
=\Delta n_\varepsilon^m+\varepsilon \Delta n_\varepsilon
-\nabla \cdot \big(n_\varepsilon\chi(c_\varepsilon)
\nabla( J_{\varepsilon}\ast c_\varepsilon)\big),\\
&\partial_t c_\varepsilon+ u_\varepsilon \cdot \nabla c_\varepsilon
=\Delta c_\varepsilon-n_\varepsilon f(c_\varepsilon),\\
&\partial_t u_\varepsilon +u_\varepsilon \cdot \nabla u_\varepsilon+\nabla \Pi_\varepsilon
=\Delta u_\varepsilon-n_{\varepsilon}\nabla \phi,\\
&\nabla \cdot u_{\varepsilon}=0,
\end{aligned}
\right.
\end{equation}
with the initial data 
\begin{equation}
(n_{\varepsilon},c_{\varepsilon},u_{\varepsilon})(x,0)=(n_{0,\varepsilon},c_{0,\varepsilon}, u_{0,\varepsilon})(x).\label{appd}
\end{equation}

We have the following proposition pertaining to the global existence of the approximate sequence. For the proof, one can refer to Appendix \ref{App:proofapp}.
\begin{prop}\label{propapp}
For any fixed $0<\varepsilon<1$, there exists a global weak solution $(n_\varepsilon,c_\varepsilon,u_\varepsilon)$ to the approximate problem \eqref{app}-\eqref{appd} satisfying
\begin{equation}\label{r100}
\left\{
\begin{aligned}
&n_{\varepsilon}\in L^{\infty}(0,T;L^1(\mathbb{R}^d)\cap L^{p}(\mathbb{R}^d))\cap L^2(0,T;H^1(\mathbb{R}^d)),\\
&c_{\varepsilon}\in L^{\infty}(0,T;L^{1}(\mathbb{R}^d)\cap L^{\infty}(\mathbb{R}^d)\cap H^1(\mathbb{R}^d))\cap L^2(0,T; H^2(\mathbb{R}^d)),\\
& u_{\varepsilon}\in L^{\infty}(0,T;L^{2}(\mathbb{R}^d))\cap L^{2}(0,T;H^{1}(\mathbb{R}^d)), 
\end{aligned}
\right.
\end{equation}
for any time $T>0$ and $1<p<\infty$.
\end{prop}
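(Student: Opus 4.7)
The plan is to construct a solution to the approximate system via a Schauder fixed-point argument and then to use $\varepsilon$-dependent a priori energy estimates to extend the local solution globally in time. Throughout this construction all constants are allowed to depend on $\varepsilon$, since uniform-in-$\varepsilon$ bounds are the content of the subsequent compactness analysis rather than of the existence step itself.

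For fixed $\varepsilon \in (0,1)$, I would set up the linearization as follows. Given a triple $(\bar n, \bar c, \bar u)$ in a ball of $L^\infty(0,T_*;L^2(\mathbb{R}^d))$ for some small $T_*>0$, with $\nabla\cdot\bar u = 0$ and $\bar n\geq 0$, solve successively: (i) the linear Navier--Stokes equation $\partial_t u + (\bar u\cdot \nabla) u + \nabla \Pi = \Delta u - \bar n \nabla \phi$, $\nabla\cdot u = 0$, with datum $u_{0,\varepsilon}$, to produce $u_\varepsilon$; (ii) the linear advection-diffusion equation $\partial_t c + u_\varepsilon \cdot \nabla c = \Delta c - \bar n f(\bar c)$ with datum $c_{0,\varepsilon}$, to produce $c_\varepsilon$; and (iii) the quasilinear parabolic equation $\partial_t n + u_\varepsilon\cdot \nabla n = \Delta n^m + \varepsilon \Delta n - \nabla\cdot(\chi(c_\varepsilon) n \nabla(J_\varepsilon * c_\varepsilon))$, with datum $n_{0,\varepsilon}$, to produce $n_\varepsilon$. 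The key point is that $J_\varepsilon * c_\varepsilon$ is smooth with all derivatives bounded by an $\varepsilon$-dependent multiple of $\|c_\varepsilon\|_{L^1}$ via Young's inequality, while the linear viscosity $\varepsilon \Delta n$ makes step (iii) uniformly parabolic and thus amenable to the classical theory of quasilinear parabolic equations. The resulting map $\Phi:(\bar n,\bar c,\bar u)\mapsto(n_\varepsilon,c_\varepsilon,u_\varepsilon)$ sends a suitable closed convex set to itself and, by Aubin--Lions, is compact in the chosen topology, so Schauder's fixed point theorem yields a local solution on $[0,T_*]$.

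To extend to arbitrary $T>0$, I would derive the following $\varepsilon$-dependent a priori bounds to preclude blow-up. Nonnegativity of $n_\varepsilon$ and $c_\varepsilon$ follows from the divergence-form structure together with $f\geq 0$ and nonnegative initial data. The upper bound $c_\varepsilon\leq c_B$ is an immediate consequence of the weak maximum principle applied to $\partial_t c + u_\varepsilon\cdot\nabla c - \Delta c = -n_\varepsilon f(c_\varepsilon)\leq 0$, and conservation of the $L^1$ mass of $n_\varepsilon$ follows from integration and $\nabla\cdot u_\varepsilon=0$. For $1<p<\infty$, multiplying the density equation by $p n_\varepsilon^{p-1}$ and integrating by parts produces good dissipation terms from both $\Delta n_\varepsilon^m$ and $\varepsilon\Delta n_\varepsilon$, and the chemotactic contribution is absorbed using the $\varepsilon$-dependent $L^\infty$ bound on $\nabla(J_\varepsilon*c_\varepsilon)$ together with Young's inequality. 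Energy estimates for $c_\varepsilon$ (tested against $c_\varepsilon$ and $-\Delta c_\varepsilon$) and for $u_\varepsilon$ (tested against $u_\varepsilon$) close the loop, since the coupling source $n_\varepsilon f(c_\varepsilon)$ is controlled by $\|f\|_{L^\infty}$ and the $L^p$ bound on $n_\varepsilon$, and the forcing $-n_\varepsilon\nabla\phi$ by $\|\nabla\phi\|_{L^\infty}$.

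Together these bounds give the regularity stated in \eqref{r100} on any $[0,T]$, rule out blow-up, and allow global continuation of the local solution. The main technical obstacle is the $L^p$ estimate for $n_\varepsilon$ at large $p$: the combined action of the porous-medium diffusion $\Delta n_\varepsilon^m$ and the linear viscosity $\varepsilon\Delta n_\varepsilon$ is essential to absorb the chemotactic drift, and one must accept that the constants deteriorate as $\varepsilon\to 0$. Once the a priori estimates are in hand, passage to the limit within the fixed-point iteration and global continuation proceed by standard Aubin--Lions-type compactness, completing the construction.
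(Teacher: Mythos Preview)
Your strategy is sound in outline, but it differs from the paper's route. The paper does not run a Schauder argument directly on the $\varepsilon$-system; instead it introduces a \emph{second} regularization layer with parameter $\eta\in(0,1)$, replacing the convective velocities by $J_\eta*u_\eta$, the porous-medium coefficient by $n_\eta^{m-1}*J_\eta$, and $\phi$ by $J_\eta*\phi$. At the $\eta$-level the system is smooth enough to admit global strong solutions in $C([0,T);H^{s_*})$ by standard local theory plus mollifier-driven a priori bounds; the paper then derives uniform-in-$\eta$ energy estimates (essentially your list: maximum principle for $c_\eta$, $L^2$ energy for the triple, $H^1$ estimate for $c_\eta$, and $L^q$ estimates for $n_\eta$ using the $\varepsilon$-viscosity and $\|\nabla(J_\varepsilon*c_\eta)\|_{L^\infty}\le C_\varepsilon$), obtains time-derivative bounds, and passes to the limit $\eta\to0$ via Aubin--Lions and Dubinski\u{\i}-type compactness to recover a weak solution of \eqref{app}--\eqref{appd}.

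The trade-off is this: your fixed-point scheme is more direct and avoids a second limit, but it forces you to solve, in step~(iii), a genuinely quasilinear equation whose drift $u_\varepsilon$ is only in $L^\infty_tL^2_x\cap L^2_tH^1_x$ and whose diffusion coefficient $mn^{m-1}+\varepsilon$ is merely measurable in $n$; invoking ``classical quasilinear parabolic theory'' here requires care, and on the whole space $\mathbb{R}^d$ the Aubin--Lions compactness you cite is only local, so the Schauder map is not obviously compact in $L^\infty(0,T_*;L^2(\mathbb{R}^d))$ without a further localization or a weak-$*$/Schauder--Tychonoff variant. The paper's extra mollification sidesteps both issues by making everything smooth at the $\eta$-level, at the cost of one more compactness passage. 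Either approach can be made to work; the paper's is cleaner to justify rigorously, while yours is conceptually leaner but leaves more analytic details to be filled in.
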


To proceed, the key point is to establish the a priori estimates that are uniform with respect to both $\var$ and $m$. These estimates allow us to pass the limit as $\varepsilon\rightarrow0$ and prove the convergence of the global approximate sequence $\{(n_\varepsilon,c_\varepsilon,u_\varepsilon)\}_{0<\varepsilon<1}$ to the desired global weak solution $(n,c,u)$ of the Cauchy problem \eqref{eq1-1}-\eqref{d}.


\subsection{Uniform energy estimates}

This subsection concerns uniform regularity estimates of $(n_{\varepsilon},c_{\varepsilon},u_{\varepsilon})$. We first state low-order regularity estimates of $n_{\varepsilon}$ and $c_{\varepsilon}$.

\begin{lemma}\label{lemma21}
If $(n_{\varepsilon},c_{\varepsilon},u_{\varepsilon})$ is the strong solution to \eqref{app}-\eqref{appd} on $[0,T]\times\mathbb{R}^d$ for a given time $T>0$, then under the assumption of \eqref{ca} and \eqref{a1}, we have
\begin{align}
&\|n_{\varepsilon}(t)\|_{L^1(\mathbb{R}^d)}\leq \|n_0\|_{L^1(\mathbb{R}^d)},\quad t\in[0,T],\label{211}\\
&\|c_{\varepsilon}(t)\|_{L^1(\mathbb{R}^d)}\leq \|c_0\|_{L^1(\mathbb{R}^d)},\quad ~t\in[0,T],\label{212}\\
&\|\nabla c_{\varepsilon}\|_{L^2(Q_T)}\leq \|c_0\|_{L^2(\mathbb{R}^d)},\label{213}\\
&0\leq c_{\varepsilon}(t,x)\leq  c_B,\quad\quad \quad \quad\quad (t,x)\in [0,T]\times\mathbb{R}^d.\label{214}
\end{align}
\end{lemma}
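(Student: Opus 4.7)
The plan is to derive each of the four estimates by standard parabolic energy techniques applied to the regularized system \eqref{app}, exploiting the divergence-free condition $\nabla\cdot u_\var=0$ and the sign structure of the reaction and chemotactic terms. Since $\var\Delta n_\var$ renders the first equation uniformly parabolic, the solution $n_\var$ in Proposition~\ref{propapp} is smooth enough that all the integrations by parts below are justified, and standard parabolic maximum principles can be invoked to obtain $n_\var\geq 0$ (using $n_{0,\var}\geq 0$ and the fact that the equation, rewritten as a linear parabolic equation with bounded lower-order coefficients once $c_\var$ is known to be bounded, preserves non-negativity).

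For \eqref{212} and \eqref{214}, I would first apply the maximum/minimum principle to the $c_\var$-equation. Since $f\geq 0$ from \eqref{ca} and $n_\var\geq 0$, the source term $-n_\var f(c_\var)$ is $\leq 0$ whenever $c_\var\geq 0$, so the lower bound $c_{0,\var}\geq 0$ propagates to give $c_\var\geq 0$; combined with $-n_\var f(c_\var)\leq 0$ and $c_{0,\var}\leq c_B$ from \eqref{duniform}, this yields $c_\var\leq c_B$. Next, to obtain \eqref{212}, integrate the $c_\var$-equation over $\mathbb{R}^d$: the convective term $u_\var\cdot\nabla c_\var$ integrates to zero because $\nabla\cdot u_\var=0$, $\Delta c_\var$ integrates to zero, and the remaining term $-\int n_\var f(c_\var)\,dx\leq 0$, so $t\mapsto \|c_\var(t)\|_{L^1}$ is nonincreasing and bounded by $\|c_{0,\var}\|_{L^1}\leq \|c_0\|_{L^1}$.

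For the mass conservation \eqref{211}, I would integrate the $n_\var$-equation over $\mathbb{R}^d$: the convection term $u_\var\cdot\nabla n_\var=\nabla\cdot(u_\var n_\var)$ integrates to zero by $\nabla\cdot u_\var=0$; the two diffusion terms $\Delta n_\var^m$ and $\var\Delta n_\var$ integrate to zero; and the chemotactic flux $\nabla\cdot(\chi(c_\var)n_\var\nabla(J_\var\ast c_\var))$ is already in divergence form and integrates to zero. Thus $\|n_\var(t)\|_{L^1}=\|n_{0,\var}\|_{L^1}\leq\|n_0\|_{L^1}$ by \eqref{duniform} and the established non-negativity. Finally, for \eqref{213}, I would test the $c_\var$-equation by $c_\var$ and integrate by parts: the transport term drops out by incompressibility, integration by parts on $\Delta c_\var$ produces $-\|\nabla c_\var\|_{L^2}^2$, and the reaction term $-\int n_\var f(c_\var) c_\var\,dx\leq 0$, so
\begin{equation*}
\frac{1}{2}\frac{d}{dt}\|c_\var(t)\|_{L^2(\mathbb{R}^d)}^2+\|\nabla c_\var(t)\|_{L^2(\mathbb{R}^d)}^2\leq 0.
\end{equation*}
Integrating in time over $[0,T]$ and using $\|c_{0,\var}\|_{L^2}\leq\|c_0\|_{L^2}$ yields $\|\nabla c_\var\|_{L^2(Q_T)}^2\leq \tfrac{1}{2}\|c_0\|_{L^2}^2\leq\|c_0\|_{L^2}^2$, which gives \eqref{213}.

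There is no real obstacle in this lemma — the derivation is a sequence of standard energy and maximum-principle arguments; the only points to be careful about are ensuring that the non-negativity of $n_\var$ is in place before invoking the sign of $-n_\var f(c_\var)$, and verifying that each boundary term at infinity vanishes (which follows from the regularity in \eqref{r100} together with the decay of smooth approximations). The bounds obtained are manifestly independent of both $\var$ and $m$, which is exactly what will be needed later for the passage $\var\to 0$ and eventually for the Hele–Shaw limit $m\to\infty$.
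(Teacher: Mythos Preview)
Your proposal is correct and follows essentially the same approach as the paper: maximum principle for $n_\var\geq 0$ and $0\leq c_\var\leq c_B$, integration in space for the $L^1$ bounds, and the $L^2$ energy identity for $c_\var$ to obtain \eqref{213}. The paper merely sketches these steps in two sentences, whereas you have spelled out the details (including the sign considerations and the role of $\nabla\cdot u_\var=0$) accurately.
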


\begin{proof}
Owing to the maximum principle for the first and second equations of $\eqref{app}$, one gets $n_\varepsilon\geq0$ and $0\leq c_\varepsilon\leq \|c_{0,\varepsilon}\|_{L^{\infty}(\mathbb{R}^d)}\leq c_B$, which verifies \eqref{214}. Then, integrating $\eqref{app}_1$ and $\eqref{app}_2$ in time gives rise to \eqref{211}-\eqref{212}. The estimate \eqref{213} can be achieved by taking the $L^2$ scalar product of $\eqref{app}_2$ with  $c_\var$ and using the facts that $\nabla\cdot  u_\varepsilon=0$ and $f\geq0$. We omit the details for brevity.
\end{proof}

In order to establish higher integrability estimates of $n_{\varepsilon}$ which are uniform in $\var$, our key ingredient is to introduce the effective ``pressure'' term  
\begin{align*}
P_\varepsilon=\frac{m}{m-1}n_{\varepsilon}^{m-1}.
\end{align*}

\begin{lemma}
Let $m>2$, and $T>0$ be any given time.
Under the assumptions \eqref{ca} and \eqref{a1}, we have 
\begin{equation}
\begin{aligned}
&\frac{1}{m-2}\sup_{t\in[0,T]}\|P_\varepsilon(t)\|_{L^1(\mathbb{R}^d)}+ \|\nabla P_\varepsilon\|_{L^2(Q_T)}^2\leq C\| c_0\|_{L^2(\mathbb{R}^d)}^2+\frac{C}{m-2}\|n_0\|_{L^{m-1}(\mathbb{R}^d)}^{m-1},\label{PL1var}
\end{aligned}
\end{equation}
and 
\begin{equation}
\begin{aligned}
\sup_{t\in[0,T]}\|n_\varepsilon(t)\|_{L^{p}(\mathbb{R}^d)}\leq N_0,\quad  1< p\leq m-1,\label{nm1var}
\end{aligned}
\end{equation}
where $C, N_0>0$ are two constants independent of $T$ and $m$.
\end{lemma}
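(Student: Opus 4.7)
The plan is to test the first equation of the approximate system \eqref{app} with the carefully chosen multiplier $\frac{m}{m-2}n_\var^{m-2}$, which is tuned so that the time-derivative produces exactly $\frac{d}{dt}\int \frac{1}{m-2}P_\var\,dx$, matching the quantity on the left of \eqref{PL1var}. The convection term $u_\var\cdot\nabla n_\var$ integrates to zero via $\nabla\cdot u_\var=0$, and the artificial viscosity contributes $-\var(m-2)\int n_\var^{m-3}|\nabla n_\var|^2\,dx\le 0$, which I discard. The porous-medium diffusion $\Delta n_\var^m$, after integration by parts, yields precisely $-\|\nabla P_\var\|_{L^2}^2$ thanks to the identity $\nabla P_\var=m n_\var^{m-2}\nabla n_\var$; this is the source of the dissipation in \eqref{PL1var}.

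For the chemotaxis term, integration by parts together with the same identity rewrites it as $\int \chi(c_\var)\nabla(J_\var\ast c_\var)\cdot\nabla P_\var\,dx$, with no leftover $m$-dependent prefactor. I then apply Young's inequality, use the mollification bound $\|\nabla (J_\var\ast c_\var)\|_{L^2}\le \|\nabla c_\var\|_{L^2}$ together with $\chi\in L^\infty$ from \eqref{ca}, and absorb half of $\|\nabla P_\var\|_{L^2}^2$ into the left-hand side. This yields a differential inequality of the form
\[
\frac{d}{dt}\int \frac{P_\var}{m-2}\,dx+\frac{1}{2}\|\nabla P_\var\|_{L^2}^2\le C\|\nabla c_\var\|_{L^2}^2.
\]
Integrating in time, combining with \eqref{213} from Lemma \ref{lemma21} to control $\int_0^T\|\nabla c_\var\|_{L^2}^2\,dt$, and using \eqref{duniform} to bound the initial contribution $\int \frac{P_\var(0)}{m-2}\,dx=\frac{m}{(m-1)(m-2)}\|n_{0,\var}\|_{L^{m-1}}^{m-1}$ produces \eqref{PL1var}.

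The second bound \eqref{nm1var} follows by interpolation. From \eqref{PL1var} I read off $\|n_\var(t)\|_{L^{m-1}}^{m-1}=\frac{m-1}{m}\|P_\var(t)\|_{L^1}\le C(m-2)\|c_0\|_{L^2}^2+C\|n_0\|_{L^{m-1}}^{m-1}$. Taking the $(m-1)$-th root and using $(a+b)^{1/(m-1)}\le a^{1/(m-1)}+b^{1/(m-1)}$ together with the fact that $(m-2)^{1/(m-1)}$ remains bounded for all $m\ge 3$, the assumption $\|n_0\|_{L^{m-1}}\le C_0$ in \eqref{a1} gives $\sup_{t\in[0,T]}\|n_\var(t)\|_{L^{m-1}}\le N_0$ uniformly in $m$ and $\var$. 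Lebesgue interpolation with the mass conservation \eqref{211} then extends this to all $p\in(1,m-1]$.

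The main subtlety throughout is the uniform-in-$m$ tracking of constants. The choice of multiplier $\frac{m}{m-2}n_\var^{m-2}$ is essential: any other normalization introduces residual $m$-factors that would either multiply $\|\nabla P_\var\|_{L^2}^2$ (spoiling the later Hele--Shaw limit analysis) or distort the initial contribution. The factor $\frac{m-2}{m-1}$ arising after division by $m-1$ stays in $[\tfrac{1}{2},1]$ for $m\ge 3$, which, combined with the boundedness of $(m-2)^{1/(m-1)}$, is exactly what allows the raw bound on $\|n_\var\|_{L^{m-1}}^{m-1}$ (which does grow with $m$) to convert into a \emph{uniform} $L^p$ bound for every fixed $p$.
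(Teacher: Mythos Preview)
Your proof is correct and follows essentially the same approach as the paper. The only cosmetic difference is that the paper first writes out the equation satisfied by $P_\var$ and then integrates it over $Q_t$, whereas you test $\eqref{app}_1$ directly with the multiplier $\frac{m}{m-2}n_\var^{m-2}$; these two routes are algebraically identical (the pressure equation is obtained precisely by multiplying $\eqref{app}_1$ by $m n_\var^{m-2}$, and dividing the paper's resulting integral identity by $m-2$ reproduces your differential inequality). One trivial slip: the artificial-viscosity contribution is $-\var m\int n_\var^{m-3}|\nabla n_\var|^2$, not $-\var(m-2)\int\ldots$, but since you discard this nonpositive term anyway it has no effect. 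For the $L^{m-1}$ bound your subadditivity argument $(a+b)^{1/(m-1)}\le a^{1/(m-1)}+b^{1/(m-1)}$ together with $\sup_{m\ge 3}(m-2)^{1/(m-1)}<\infty$ is a slightly more direct variant of the paper's limit-plus-finite-range argument, reaching the same conclusion.
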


\begin{proof}
The term $P_\var$ allows us to rewrite $\eqref{app}_1$ as
\begin{equation*}\label{dpevar}
\begin{aligned}
\partial_t P_\varepsilon+&u_\varepsilon\cdot\nabla P_\varepsilon\\
\quad=&(m-1)P_\varepsilon\big(\Delta P_\varepsilon-\nabla \cdot(\chi(c_\varepsilon)\nabla (J_{\varepsilon}\ast c_\varepsilon)\big)+\nabla P_\varepsilon\cdot\big(\nabla P_\varepsilon-\chi(c_\varepsilon)\nabla (J_\varepsilon*c_\varepsilon)\big)\\
&+\varepsilon  m n_\varepsilon^{m-2}\Delta n_\varepsilon.
\end{aligned}
\end{equation*}
Integrating on $Q_t$ and using the Cauchy-Schwarz inequality gives rise to
 \begin{equation*}
 \begin{aligned}
\int_{\mathbb{R}^d}P_{\varepsilon}\, &dx+(m-2)\iint_{Q_t}|\nabla P_\varepsilon|^2\,dxd\tau+\frac{4\varepsilon  m(m-2)}{(m-1)^2}\iint_{Q_t}|\nabla n_\varepsilon^{\frac{m-1}{2}}|^2\,dxd\tau\\
 =&\int_{\mathbb{R}^d}\frac{m}{m-1}n_{0,\varepsilon}^{m-1}\, dx+(m-2)\iint_{Q_t}\nabla P_\varepsilon\cdot\nabla( J_{\varepsilon}\ast c_\varepsilon)\chi(c_\varepsilon)\,dxd\tau\\
 \leq &\int_{\mathbb{R}^d}\frac{m}{m-1}n_{0,\varepsilon}^{m-1}\, dx+\frac{m-2}{2}\iint_{Q_t}|\nabla P_\varepsilon|^2\,dxd\tau+\frac{m-2}{2}\iint_{Q_t}\chi^2(c_\varepsilon)|\nabla( J_{\varepsilon}\ast c_\varepsilon)|^2\,dxd\tau.
 \end{aligned}
 \end{equation*}
 Here by \eqref{duniform}, \eqref{213}, \eqref{214} and $\|\nabla( J_{\varepsilon}\ast c_\varepsilon)\|_{L^2(\mathbb{R}^d)}\leq \|\nabla c_\varepsilon\|_{L^2(\mathbb{R}^d)}$ due to Young's inequality for convolutions, one gets
 \begin{equation*}
 \begin{aligned}
 \frac{2}{m-2}\sup\limits_{t\in[0,T]}&\|P_{\varepsilon}(t)\|_{L^1(\mathbb{R}^d)}+\|\nabla P_\varepsilon\|_{L^2(Q_T)}^2 \\
 \leq& \sup_{0\leq s\leq c_B}|\chi(s)|^2\|\nabla c_\varepsilon\|_{L^2(Q_T)}^2+\frac{2m}{(m-1)(m-2)}\|n_{0,\varepsilon}\|_{L^{m-1}(\mathbb{R}^d)}^{m-1}\\
 \leq& C \|c_0\|_{L^2(\mathbb{R}^d)}^2+\frac{C}{m-2}\|n_{0}\|_{L^{m-1}(\mathbb{R}^d)}^{m-1} .
 \end{aligned}
 \end{equation*}
 This leads to \eqref{PL1var}.

In addition, one infers from \eqref{a1} and \eqref{PL1var} that
\begin{equation*}
\begin{aligned}
\sup\limits_{t\in[0,T]}\|n_\varepsilon(t)\|_{L^{m-1}(\mathbb{R}^d)}&\leq \Big( (m-2)\|c_0\|_{L^2(\mathbb{R}^d)}^2+\|n_0\|_{L^{m-1}(\mathbb{R}^d)}^{m-1} \Big)^{\frac{1}{m-1}}\\
&\leq \Big( (m-2)C_0^2+C_0^{m-1} \Big)^{\frac{1}{m-1}}\\
&\rightarrow \max\{1,C_0\}~~\text{as}~~m\rightarrow \infty.
\end{aligned}
\end{equation*}
where the uniform constant $C_0>0$ is given in \eqref{a1}. This implies that for $m> m_0$ with some suitably large $m_0>2$,
\begin{equation*}
\begin{aligned}
\sup\limits_{t\in[0,T]}\|n_\varepsilon(t)\|_{L^{m-1}(\mathbb{R}^d)}\leq 2\max\{1,C_0\}.
\end{aligned}
\end{equation*}
On the other hand, one directly concludes from \eqref{a1} and \eqref{PL1var} that, for some constant $C(m_0,C_0)>0$,
\begin{equation*}
\begin{aligned}
\sup\limits_{t\in[0,T]}\|n_\varepsilon(t)\|_{L^{m-1}(\mathbb{R}^d)}\le C \Big((m-2)\|c_0\|_{L^2(\mathbb{R}^d)}^2+\|n_0\|_{L^{m-1}(\mathbb{R}^d)}^{m-1}\Big)^{\frac{1}{m-1}}
\le C(m_0,C_0),
\qquad 2<m\le m_0.
\end{aligned}
\end{equation*}
Consequently, we have \eqref{nm1var} for $p=m-1$.
This, combined with \eqref{211} and the interpolation between $L^1(\mathbb{R}^d)$ and $L^{m-1}(\mathbb{R}^d)$, implies  \eqref{nm1var} for $1< p<m-1$.
\end{proof}

\begin{lemma}\label{lemma23}
Let $m\geq 3$, and $T>0$ be any given time. Then, under the assumptions \eqref{ca} and \eqref{a1}, it holds that 
\begin{equation}\label{uL2var}
\begin{aligned}
&\sup_{t\in[0,T]}\|u_{\varepsilon}(t)\|_{L^2(\mathbb{R}^d)}^2+\|\nabla u_{\varepsilon}\|_{L^2(Q_T)}^2\leq Ce^{CT}\big(\|u_0\|_{L^2(\mathbb{R}^d)}^2+TN_0^2\big),
\end{aligned}
\end{equation}
and
\begin{equation}
\begin{aligned}
&\sup_{t\in[0,T]}\|\nabla c_{\varepsilon}(t)\|_{L^2(\mathbb{R}^d)}^2+\int_0^T \|\nabla^2 c_{\varepsilon}(t)\|_{L^2(\mathbb{R}^d)}^2\,dt\\
&\quad \quad\leq C\bigg(\|\nabla c_0\|_{L^2(\mathbb{R}^d)}^2+c_B^2\Big(e^{CT}\|u_0\|_{L^2(\mathbb{R}^d)}^2+TN_0^2\Big)\bigg),\label{nablacL2var}
\end{aligned}
\end{equation}
where $C>0$ is a constant independent of $T$ and $m$, and $N_0$ is given by \eqref{nm1var}.
\end{lemma}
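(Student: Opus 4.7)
The plan is to derive two energy estimates in sequence, both crucially exploiting the uniform bound $\|n_\var(t)\|_{L^2(\mathbb{R}^d)}\le N_0$ (valid because $m\ge 3$ forces $m-1\ge 2$, so \eqref{nm1var} applied with $p=2$ yields the bound), together with the $L^\infty$ controls on $\nabla\phi$ and $f$ from \eqref{ca} and the maximum-principle bound $0\le c_\var\le c_B$ from Lemma~\ref{lemma21}.

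For \eqref{uL2var}, I take the $L^2(\mathbb R^d)$ pairing of $\eqref{app}_3$ with $u_\var$. The divergence-free condition $\nabla\cdot u_\var=0$ annihilates both the pressure contribution and the convection term $\int(u_\var\cdot\nabla)u_\var\cdot u_\var\,dx$, yielding
$$\frac{1}{2}\frac{d}{dt}\|u_\var\|_{L^2}^2+\|\nabla u_\var\|_{L^2}^2=-\int_{\mathbb{R}^d} n_\var\nabla\phi\cdot u_\var\,dx.$$
I then control the right-hand side by Hölder, Sobolev's inequality $\|u_\var\|_{L^{2d/(d-2)}}\le C\|\nabla u_\var\|_{L^2}$ (or a Gagliardo-Nirenberg/Ladyzhenskaya inequality when $d=2$), and the $L^q$ bound on $n_\var$ at the Hölder-conjugate exponent, after which Young's inequality absorbs $\tfrac12\|\nabla u_\var\|_{L^2}^2$ into the left-hand side while leaving $CN_0^2$ on the right. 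Integration over $[0,T]$ then delivers \eqref{uL2var}.

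For \eqref{nablacL2var}, I test $\eqref{app}_2$ with $-\Delta c_\var$, obtaining
$$\frac{1}{2}\frac{d}{dt}\|\nabla c_\var\|_{L^2}^2+\|\Delta c_\var\|_{L^2}^2=\int_{\mathbb{R}^d}(u_\var\cdot\nabla c_\var)\Delta c_\var\,dx+\int_{\mathbb{R}^d} n_\var f(c_\var)\Delta c_\var\,dx.$$
The reaction contribution is absorbed by Cauchy-Schwarz and Young as $\le CN_0^2+\tfrac14\|\Delta c_\var\|_{L^2}^2$. For the convection term, the identity
$$\int(u_\var\cdot\nabla c_\var)\Delta c_\var\,dx=-\int\nabla u_\var:(\nabla c_\var\otimes\nabla c_\var)\,dx,$$
which follows from integration by parts combined with $\nabla\cdot u_\var=0$, together with the dimension-free interpolation
$$\|\nabla c_\var\|_{L^4}^2\le C\|c_\var\|_{L^\infty}\|\Delta c_\var\|_{L^2}\le Cc_B\|\Delta c_\var\|_{L^2},$$
obtained by integrating by parts directly on $\int|\nabla c_\var|^4\,dx$ and using Calderón-Zygmund, gives
$$\Bigl|\int(u_\var\cdot\nabla c_\var)\Delta c_\var\,dx\Bigr|\le\|\nabla u_\var\|_{L^2}\|\nabla c_\var\|_{L^4}^2\le\tfrac14\|\Delta c_\var\|_{L^2}^2+Cc_B^2\|\nabla u_\var\|_{L^2}^2.$$
Collecting the bounds and integrating on $[0,T]$ while invoking \eqref{uL2var} to control $\int_0^T\|\nabla u_\var\|_{L^2}^2\,dt$ produces \eqref{nablacL2var}.

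The main obstacle is the convection term in the $c$-equation: a naive $L^2$-$L^2$-$L^2$ splitting of $\int u_\var\cdot\nabla c_\var\,\Delta c_\var\,dx$ creates a nonlinear feedback $\|\nabla u_\var\|_{L^2}^2\|\nabla c_\var\|_{L^2}^2$, forcing a Grönwall exponential in $T$ and destroying the stated linear-in-$T$ right-hand side. Circumventing this requires the $L^\infty$-based interpolation $\|\nabla c_\var\|_{L^4}^2\le C\|c_\var\|_{L^\infty}\|\Delta c_\var\|_{L^2}$, which genuinely exploits the maximum-principle bound $c_\var\le c_B$ and is precisely what produces the factor $c_B^2$ appearing in \eqref{nablacL2var}.
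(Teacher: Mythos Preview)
Your proposal is correct, and the overall structure (energy estimate on $u_\var$, then test $\eqref{app}_2$ by $-\Delta c_\var$) matches the paper's. The technical handling differs in two places.

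For \eqref{uL2var}, the paper simply bounds $\int n_\var\nabla\phi\cdot u_\var\,dx$ by $C(\|n_\var\|_{L^2}^2+\|u_\var\|_{L^2}^2)$ via Cauchy--Schwarz and then invokes Gr\"onwall; your absorption into $\|\nabla u_\var\|_{L^2}^2$ through Sobolev is a valid alternative that in fact better matches the claimed $T$-independent constant for $d\ge 3$, though in $d=2$ the Ladyzhenskaya route still leaves a factor of $\|u_\var\|_{L^2}$ and some Gr\"onwall-type closure is unavoidable there as well.

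For the convection term in the $c$-equation, the paper integrates by parts \emph{once more}: after reaching $\sum_{i,j}\int\partial_j u^i_\var\,\partial_i c_\var\,\partial_j c_\var\,dx$ it moves a further derivative off $\partial_i c_\var$ onto $c_\var$ (using $\nabla\cdot u_\var=0$ again), arriving at $-\sum_{i,j}\int\partial_j u^i_\var\,c_\var\,\partial_{ij}c_\var\,dx$, which is then bounded directly by $c_B^2\|\nabla u_\var\|_{L^2}^2+\tfrac14\|\Delta c_\var\|_{L^2}^2$. Your route via the interpolation $\|\nabla c_\var\|_{L^4}^2\le C\|c_\var\|_{L^\infty}\|\Delta c_\var\|_{L^2}$ reaches the same bound with one extra (but standard) step. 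Both arguments exploit the same structural fact---that $\|c_\var\|_{L^\infty}\le c_B$ decouples the $c$-estimate from $\|\nabla c_\var\|_{L^2}$ and avoids a Gr\"onwall exponential---and produce the same $c_B^2$ factor.
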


\begin{proof}
Testing $\eqref{app}_3$ by $u_\var$ and employing \eqref{duniform}, the Cauchy-Schwarz inequality  and the condition \eqref{ca} on $\phi$, we have
\begin{equation*}
\begin{aligned}
\frac{1}{2}\|u_\varepsilon  \|_{L^2(\mathbb{R}^d)}^2+\int_0^t\|\nabla u_\varepsilon  \|_{L^2(\mathbb{R}^d)}^2\,d\tau
=&\frac{1}{2}\|u_{0,\varepsilon} \|_{L^2(\mathbb{R}^d)}^2+\iint_{Q_t}n_\varepsilon\nabla \phi \cdot u_\varepsilon  \,dxd\tau\\
\leq& \frac{1}{2}\|u_{0} \|_{L^2(\mathbb{R}^d)}^2+(1+\|\nabla \phi\|_{L^\infty(\mathbb{R}^d)})\int_0^t(\|n_\varepsilon  \|_{L^2(\mathbb{R}^d)}^2+\|u_\varepsilon  \|_{L^2(\mathbb{R}^d)}^2)\,d\tau.
\end{aligned}
\end{equation*}
Note that $\|n_\varepsilon(t) \|_{L^2(\mathbb{R}^d)}^2$ is uniformly bounded in time due to \eqref{nm1var} and $m\geq3$. Using the  Gr\"{o}nwall inequality, one gets \eqref{uL2var} immediately.

Next, we perform the $L^2$-estimate of $\nabla c_\var$. 
Testing $\eqref{app}_2$ by $-\Delta c_\var$, we have
\begin{equation}\label{um1var}
\begin{aligned}
\frac{1}{2}\|\nabla& c_\varepsilon  \|_{L^2(\mathbb{R}^d)}^2+\int_0^t\|\Delta c_\varepsilon  \|_{L^2(\mathbb{R}^d)}^2\,d\tau\\
&=\frac{1}{2}\|\nabla c_{0,\varepsilon} \|_{L^2(\mathbb{R}^d)}^2-\iint_{Q_t} u_\varepsilon  \cdot\nabla c_\varepsilon\Delta c_\varepsilon  \,dxd\tau-\iint_{Q_t}n_\varepsilon  f(c_\varepsilon) \Delta c_\varepsilon  \,dxd\tau\\
&\leq \frac{1}{2}\|\nabla c_{0} \|_{L^2(\mathbb{R}^d)}^2+\frac{1}{4}\int_0^t\|\Delta c_\varepsilon  \|_{L^2(\mathbb{R}^d)}^2\,d\tau\\
&\quad+\iint_{Q_t}n_\var^2f^2(c_\varepsilon)\,dxd\tau-\iint_{Q_t}  u_\varepsilon  \cdot\nabla c_\varepsilon\Delta c_\varepsilon  \,dxd\tau.
\end{aligned}
\end{equation}
For the last term on the right-hand side of \eqref{um1var}, integrating by parts and using \eqref{214}, Young's inequality and $\|\nabla^2 c_\varepsilon\|_{L^2(\mathbb{R}^d)}\leq C\|\Delta c_\varepsilon\|_{L^2(\mathbb{R}^d)}$, we can directly calculate that
\begin{equation}\label{um2var}
    \begin{aligned}
-\iint_{Q_t}  u_\varepsilon  \cdot\nabla c_\varepsilon\Delta c_\varepsilon  \,dxd\tau=&-\sum_{i,j}\iint_{Q_t} u_\varepsilon^i \partial_i c_\varepsilon  \partial_{jj} c_\varepsilon  \,dxd\tau\\
=&\underbrace{\sum_{i,j}\iint_{Q_t} u_\varepsilon^i\partial_{ij} c_\varepsilon  \partial_{j} c_\varepsilon  \,dxd\tau}_{=0}+\sum_{i,j}\iint_{Q_t} \partial_{j}u_\varepsilon^i \partial_{i} c_\varepsilon  \partial_{j} c_\varepsilon  \,dxd\tau\\
=&\underbrace{-\sum_{i,j}\iint_{Q_t} \partial_{ij}  u_\varepsilon^i c_\varepsilon  \partial_{j} c_\varepsilon  \,dxd\tau}_{=0}-\sum_{i,j}\iint_{Q_t} \partial_{j} u_\varepsilon^i c_\varepsilon  \partial_{ij} c_\varepsilon  \,dxd\tau\\
\leq & Cc_B^2\int_0^t \|\nabla u_\varepsilon  \|_{L^2(\mathbb{R}^d)}^2\,d\tau+\frac{1}{4}\int_0^t\|\Delta c_\varepsilon  \|_{L^2(\mathbb{R}^d)}^2\,d\tau.
    \end{aligned}
\end{equation}
The combination of \eqref{um1var} and \eqref{um2var} gives rise to
\begin{equation}\nonumber
    \begin{aligned}
        &\|\nabla c_\varepsilon  \|_{L^2(\mathbb{R}^d)}^2+\int_0^t\|\Delta c_\varepsilon  \|_{L^2(\mathbb{R}^d)}^2\,d\tau\leq\|\nabla c_0 \|_{L^2(\mathbb{R}^d)}^2+ 2 c_B^2\int_0^t\|\nabla u_\varepsilon  \|_{L^2(\mathbb{R}^d)}^2\,d\tau+C\int_0^t\|n_\varepsilon  \|_{L^2(\mathbb{R}^d)}^2\,d\tau.
    \end{aligned}
\end{equation}
Together with \eqref{nm1var}, \eqref{uL2var} and $\|\nabla^2 c_\varepsilon\|_{L^2(\mathbb{R}^d)}\leq C\|\Delta c_\varepsilon\|_{L^2(\mathbb{R}^d)}$, we conclude \eqref{nablacL2var}.
\end{proof}

Finally, we establish uniform estimates of the time derivatives of $(n_\varepsilon,c_\varepsilon,u_\varepsilon)$, which play an indispensable role in proving the strong convergence of $(n_{\varepsilon},c_{\varepsilon},u_{\varepsilon})$.

\begin{lemma}
Let $m\geq 3$ and $s_0>\frac{d}{2}$. Set
$p_m=\frac{(d+2)(m-1)}{d}$ if $d\geq3$ and $p_m=2(m-1)$ if $d=2$.
It holds true that
\begin{equation}\label{n1var}
\begin{aligned}
&\|\partial_t n_\varepsilon\|_{L^{p_m}(0,T;H^{-s_0-2}(\mathbb{R}^d))}\leq C_{m,T},
\end{aligned}
\end{equation}
and
\begin{equation}
\begin{aligned}
&\|\partial_t c_\varepsilon\|_{L^2(0,T;H^{-1}(\mathbb{R}^d))}
+\|\partial_t u_\varepsilon\|_{L^{2}(0,T;H^{-s_0-1}(\mathbb{R}^d))}
\leq C_{T},\label{cutvar}
\end{aligned}
\end{equation}
where $C_{m,T}>0$ is independent of $\varepsilon$, and $C_T>0$ is independent of both $m$ and $\varepsilon$.
\end{lemma}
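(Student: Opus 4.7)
The plan is to bound each time derivative by duality: for $\partial_t c_\var$ we test against $\varphi\in H^1(\mathbb{R}^d)$, for $\partial_t u_\var$ against a divergence-free $\psi\in H^{s_0+1}(\mathbb{R}^d)$ (so that the fluid pressure $\nabla\Pi_\var$ drops out), and for $\partial_t n_\var$ against $\varphi\in H^{s_0+2}(\mathbb{R}^d)$. After integration by parts, every resulting term is a product of factors already controlled by Lemmas \ref{lemma21}--\ref{lemma23}, so the task reduces to bookkeeping. The reason we must lose $s_0+2$ derivatives for $n_\var$ is that the nonlinear diffusion $\Delta n_\var^m$ and the chemotactic drift $\nabla\cdot(\chi(c_\var)n_\var\nabla(J_\var\ast c_\var))$ put second- and first-order derivatives onto the test function, and the Sobolev embedding $H^{s_0}\hookrightarrow L^\infty$ with $s_0>d/2$ is what converts these into $L^\infty$-norms of $\Delta\varphi$ and $\nabla\varphi$.

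For $\partial_t c_\var$, using $\nabla\cdot u_\var=0$ to rewrite the convection as $-\int c_\var u_\var\cdot\nabla\varphi\,dx$, the pointwise bound $c_\var\leq c_B$ from \eqref{214}, the boundedness of $f$ in \eqref{ca}, the uniform $L^\infty_t L^2_x$ bounds on $u_\var$ and on $n_\var$ (the latter from \eqref{nm1var} together with $m\geq3$), and the $L^2(Q_T)$ bound on $\nabla c_\var$ immediately yield the $L^2(0,T;H^{-1})$ estimate with $m$-independent constant. For $\partial_t u_\var$, writing the convective term in divergence form $\int u_\var\otimes u_\var:\nabla\psi\,dx$ and invoking $H^{s_0+1}\hookrightarrow W^{1,\infty}$, each of the resulting factors $\|u_\var\|_{L^2}^2$, $\|u_\var\|_{L^2}$, and $\|\nabla\phi\|_{L^\infty}\|n_\var\|_{L^1}$ is uniformly bounded in $L^\infty(0,T)$, producing the $L^2(0,T;H^{-s_0-1})$ estimate.

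The main obstacle is $\partial_t n_\var$. The three benign terms (convection $\int n_\var u_\var\cdot\nabla\varphi$, the viscous $\var\int n_\var\Delta\varphi$, and the aggregation $\int\chi(c_\var)n_\var\nabla(J_\var\ast c_\var)\cdot\nabla\varphi$) are dispatched exactly as above, each producing at worst an $L^2(0,T)$ bound on the corresponding $H^{-s_0-2}$-norm. The critical term is $\Delta n_\var^m$: by duality $|\langle\Delta n_\var^m,\varphi\rangle|\leq\|n_\var^m\|_{L^1_x}\|\Delta\varphi\|_{L^\infty_x}$, so everything hinges on controlling $\|n_\var^m\|_{L^1_x}=\|n_\var\|_{L^m}^m$ in $L^{2(m-1)}(0,T)$. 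The approach is the H\"older split $\|n_\var^m\|_{L^1}\leq\|n_\var\|_{L^{m-1}}\|n_\var^{m-1}\|_{L^{(m-1)/(m-2)}}$ combined with Sobolev embedding applied to $n_\var^{m-1}$: for $d\geq3$ one has $\|n_\var^{m-1}\|_{L^{2d/(d-2)}}\leq C\|\nabla n_\var^{m-1}\|_{L^2}$, and an analogous Gagliardo--Nirenberg inequality handles $d=2$. Interpolating with the uniform $L^\infty_t L^1_x$ bound on $n_\var^{m-1}$ from \eqref{PL1var} and using $\|\nabla n_\var^{m-1}\|_{L^2(Q_T)}\leq\|\nabla P_\var\|_{L^2(Q_T)}<\infty$ places $\|n_\var^{m-1}\|_{L^{(m-1)/(m-2)}}$ in $L^{2(m-1)}(0,T)$, at the price of a constant depending on $m$ but not on $\var$. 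This yields the claimed $C_{m,T}$. The whole estimate is needed only to pass $\var\to 0$ with $m$ fixed, so the loss of $m$-uniformity in this step is harmless.
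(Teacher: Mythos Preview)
Your proposal is correct and follows essentially the same route as the paper. The only cosmetic differences are that the paper writes $\|n_\var^m\|_{L^1}=\|n_\var^{m-1}\|_{L^{m/(m-1)}}^{m/(m-1)}$ directly before interpolating (whereas you first peel off one factor of $n_\var$ via H\"older with exponent $m-1$), and that for $\partial_t u_\var$ the paper applies the Leray projector $\mathbf{P}$ rather than testing against divergence-free $\psi$; in both cases the resulting estimates are identical.
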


\begin{proof}
Before analyzing $\partial_t n_\var$, we estimate $n_\var^m$. In the case $d\geq3$, since $\frac{2d}{d-2}>\frac{m}{m-1}>1$, it follows from H\"older's and Sobolev's inequalities that
\begin{equation*}
\begin{aligned}
\|n^m_\varepsilon\|_{L^{1}(\mathbb{R}^d)}&=\|n_\varepsilon^{m-1}\|_{L^{\frac{m}{m-1}}(\mathbb{R}^d)}^{\frac{m}{m-1}}\\
&\leq \|n_\varepsilon^{m-1}\|_{L^1(\mathbb{R}^d)}^{(1-\frac{2d}{(d+2)m})\frac{m}{m-1}} \|n_\varepsilon^{m-1}\|_{L^{\frac{2d}{d-2}}(\mathbb{R}^d)}^{\frac{2d}{(d+2)(m-1)}}\\
&\leq C \|n_\varepsilon^{m-1}\|_{L^1(\mathbb{R}^d)}^{(1-\frac{2d}{(d+2)m})\frac{m}{m-1}}  \|\nabla n^{m-1}_\varepsilon\|_{L^2(\mathbb{R}^d)}^{\frac{2d}{(d+2)(m-1)}}.
\end{aligned}
\end{equation*}
In the case $d=2$, applying the Gagliardo-Nirenberg--Sobolev inequality to $g=n_\varepsilon^{m-1}$ with $p=\frac{m}{m-1}$, $q=1$, $r=2$ and $\alpha=\frac1m$, we derive
\begin{equation*}
\begin{aligned}
\|n^m_\varepsilon\|_{L^{1}(\mathbb{R}^d)}=\|n_\varepsilon^{m-1}\|_{L^{\frac{m}{m-1}}(\mathbb{R}^d)}^{\frac{m}{m-1}}\leq C\|n^{m-1}_\varepsilon\|_{L^1(\mathbb{R}^d)}\|\nabla n^{m-1}_\varepsilon\|_{L^2(\mathbb{R}^d)}^{\frac{1}{m-1}}.
\end{aligned}
\end{equation*}
As $m\geq 2$, combining the above two cases with \eqref{PL1var}, we know that
\begin{align}
\|n_\var^m\|_{L^{p_m}(0,T;L^{1}(\mathbb{R}^d))}\leq C_{m,T}.\label{nmLp}
\end{align}
Note that 
$$
\partial_t n_\varepsilon=-\nabla\cdot \big( u_\varepsilon  n_\varepsilon\big)+\Delta n_\var^m+\varepsilon\Delta n_\varepsilon-\nabla\cdot \big(\chi(c_\varepsilon) n_\varepsilon  \nabla(J_\varepsilon\ast c_\varepsilon)\big).
$$
For any $\varphi\in L^{\frac{p_m}{p_m-1}}(0,T;H^{s_0+2}(\mathbb{R}^d))$ with $p_m$ defined above, we deduce from \eqref{nm1var}, \eqref{uL2var}, \eqref{nablacL2var}, \eqref{nmLp} and the embedding property $H^{s_0+2}(\mathbb{R}^d)\hookrightarrow W^{2,\infty}(\mathbb{R}^d)$ that
\begin{equation*}
\begin{aligned}
&\quad \Big|\iint_{Q_T} \partial_t n_\varepsilon   \varphi  \,dxdt \Big|\\
\leq& \|n_\var^m\|_{L^{p_m}(0,T;L^{1}(\mathbb{R}^d))} \|\Delta \varphi\|_{L^{\frac{p_m}{p_m-1}}(0,T;L^{\infty}(\mathbb{R}^d))}+\varepsilon  \|n_\varepsilon\|_{L^{\infty}(0,T;L^1(\mathbb{R}^d))}\|\Delta \varphi\|_{L^{1}(0,T;L^{\infty}(\mathbb{R}^d))}\\
&+\|u_\varepsilon\|_{L^{\infty}(0,T;L^2(\mathbb{R}^d))}\|n_\varepsilon\|_{L^{\infty}(0,T;L^2(\mathbb{R}^d))}\|\nabla\varphi\|_{L^1(0,T;L^{\infty}(\mathbb{R}^d))}\\
&+\sup_{0\leq s\leq c_B}|\chi(s)|\|n_\varepsilon\|_{L^{\infty}(0,T;L^2(\mathbb{R}^d))}\|\nabla c_\varepsilon\|_{L^{\infty}(0,T;L^2(\mathbb{R}^d))} \|\nabla\varphi\|_{L^1(0,T;L^{\infty}(\mathbb{R}^d))}\leq C_{m,T}.
\end{aligned}
\end{equation*}
This implies \eqref{n1var}.

On the other hand, note $\partial_t c_\varepsilon=\Delta c_\varepsilon-n_\varepsilon  f(c_\varepsilon)-\nabla\cdot\big( u_\varepsilon  c_\varepsilon\big)$. It is clear that, due to \eqref{214}, \eqref{nm1var}, \eqref{uL2var} and \eqref{nablacL2var}, $\Delta c_\var$ and $n_\varepsilon  f(c_\varepsilon)$ are uniformly bounded in $L^2(0,T;L^2(\mathbb{R}^d))$, and  we have
\begin{equation}
\begin{aligned}
\big\|\nabla\cdot\big( u_\varepsilon  c_\varepsilon\big)\big\|_{L^{2}(0,T;H^{-1}(\mathbb{R}^d))}&\leq CT^{1/2}\| u_\varepsilon  c_\varepsilon  \|_{L^{\infty}(0,T;L^2(\mathbb{R}^d))}\leq  CT^{1/2}\|c_\varepsilon\|_{L^{\infty}(Q_T)}\|u_\varepsilon\|_{L^{\infty}(0,T;L^{2}(\mathbb{R}^d))}\leq C_T.\nonumber
\end{aligned}
\end{equation}
Thus, we conclude the uniform $L^2(0,T;H^{-1}(\mathbb{R}^d))$ bound of $\partial_t c_\var$ in \eqref{cutvar}.

Furthermore,  we can write
$\partial_t u_\varepsilon=-\mathbf{P}\nabla\cdot\big( u_\varepsilon\otimes u_\varepsilon\big)+\Delta u_\varepsilon- \mathbf{P}(n_\varepsilon  \nabla\phi)$, where the incompressible projection $\mathbf{P}$ is defined by $\mathbf{P}:={\rm Id}+\nabla(-\Delta)^{-1}\nabla\cdot$. We know that $\Delta u_\var$ and $\mathbf{P}(n_\varepsilon  \nabla\phi)$ are, respectively, uniformly bounded in $L^2(0,T;\dot{H}^{-1}(\mathbb{R}^d))$ and $L^{\infty}(0,T;L^2(\mathbb{R}^d))$. For any $\psi\in L^{2}(0,T;H^{s_0+1}(\mathbb{R}^d))$, in view of \eqref{uL2var} and Sobolev's embeddings, we get
\begin{equation*}
\begin{aligned}
\Big|\iint_{Q_T} \mathbf{P}\nabla\cdot \big( u_\varepsilon\otimes u_\varepsilon) \psi \,dx dt\Big|
&\leq CT^{\frac{1}{2}}\|u_\varepsilon\|_{L^{\infty}(0,T;L^2(\mathbb{R}^d))}^2 \|\nabla\psi\|_{L^{2}(0,T;H^{s_0}(\mathbb{R}^d))}\leq CT.
\end{aligned}
\end{equation*}
Hence, we justify the uniform bound of $\partial_t u_\var$ in \eqref{cutvar}.
\end{proof}

\subsection{Proof of Theorem \ref{thm1}}

We are in a position to prove the convergence of the approximate sequence $\{(n_\varepsilon,c_\varepsilon,u_\varepsilon)\}_{0<\varepsilon<1}$. The uniform bounds in Lemmas \ref{lemma21}-\ref{lemma23} ensure that as $\varepsilon\rightarrow 0$, 
there exists a limit $(n,c,u)$ such that, for $m\geq3$, up to subsequences,
\begin{equation}\label{weak}
\left\{
\begin{aligned}
n_\varepsilon  \rightharpoonup n \quad & \text{weakly$^*$} && \text{in } L^{\infty}(0,T;L^{p}(\mathbb{R}^d)),\qquad 1\leq p\leq m-1,\\
c_\varepsilon  \rightharpoonup c \quad & \text{weakly$^*$} && \text{in } L^{\infty}(0,T;L^1(\mathbb{R}^d)\cap L^{\infty}(\mathbb{R}^d)\cap H^1(\mathbb{R}^d)),\\
c_\varepsilon  \rightharpoonup c \quad & \text{weakly}   && \text{in } L^2(0,T;H^2(\mathbb{R}^d)),\\
u_\varepsilon  \rightharpoonup u \quad & \text{weakly$^*$} && \text{in } L^{\infty}(0,T;L^2(\mathbb{R}^d)),\\
u_\varepsilon  \rightharpoonup u \quad & \text{weakly}   && \text{in } L^2(0,T;H^1(\mathbb{R}^d)).
\end{aligned}
\right.
\end{equation}
and for any function $\varphi\in \mathcal{C}_0^\infty(Q_T)$,  
\begin{equation}\label{strong0}
\begin{aligned}
\varepsilon  \Big|\int_{Q_T} n_\varepsilon  \Delta \varphi \,dx dt\Big|&\leq C\varepsilon  \|n_\varepsilon\|_{L^{\infty}(0,T;L^1(\mathbb{R}^d))} \|\Delta \varphi\|_{L^1(0,T;L^{\infty}(\mathbb{R}^d))}\rightarrow 0\text{ as }\varepsilon\to0.
\end{aligned}
\end{equation}

In order to prove the convergence of all nonlinear terms in \eqref{app}, we need the strong convergence of $(n_\varepsilon,c_\varepsilon,u_\varepsilon)$ in a suitable sense. Recall the uniform bounds \eqref{213}, \eqref{uL2var}, \eqref{nablacL2var} for $c_\varepsilon, u_\var$ and \eqref{cutvar} for $\partial_t c_\varepsilon, \partial_{t} u_\var$. Thus, applying the Aubin-Lions-Simon lemma (Lemma \ref{lemmaAubin}) and the Cantor diagonal argument, we prove that, up to a subsequence, as $\varepsilon\rightarrow 0$,
\begin{equation}\label{strong1}
\left\{
\begin{alignedat}{3}
c_\varepsilon  \to c \quad
&\text{strongly}\quad &\text{in}\quad &L^2(0,T;H^1_{\mathrm{loc}}(\mathbb{R}^d)),\\
u_\varepsilon  \to u \quad
&\text{strongly}\quad &\text{in}\quad &L^2(0,T;L^2_{\mathrm{loc}}(\mathbb{R}^d)).
\end{alignedat}
\right.
\end{equation}
From this and the fact that $c_\var$ is uniformly bounded in $L^{\infty}(Q_T)$, we infer that, as $\varepsilon\rightarrow0$, for any $1\leq p<\infty$,
\begin{equation}\label{cstrong}
\left\{
\begin{alignedat}{3}
\chi(c_\varepsilon)\to \chi(c)\quad
&\text{strongly}\quad &\text{in}\quad &L^p_{\mathrm{loc}}(Q_T),\\
f(c_\varepsilon)\to f(c)\quad
&\text{strongly}\quad &\text{in}\quad &L^p_{\mathrm{loc}}(Q_T).
\end{alignedat}
\right.
\end{equation}

The nonlinear diffusion term requires a strong convergence of $n_\var$. To achieve it, one deduces from \eqref{PL1var} and \eqref{n1var}, Lemma \ref{lemmaDub} (Dubinski\"i compactness lemma) and the Cantor diagonal argument that, up to a subsequence, as $\varepsilon\rightarrow 0$,
\begin{equation}
\begin{aligned}
n_\varepsilon\rightarrow n\quad& \text{strongly~~in ~~} L^2_{{\rm{loc}}}(Q_T).\label{strong2}
\end{aligned}
\end{equation}
We now claim that there exists some $r_1, r_2>1$ and a constant $C_{m,T}$ independent of $\var$ such that
\begin{align}
\|n^m_\varepsilon\|_{L^{r_1}(0,T;L^{r_2})}\leq C_{m,T}.\label{323}
\end{align}
Indeed, in the case $d\geq3$, we set $r_1=\frac{2(m-1)}{m}>1$ and $r_2=\frac{2d}{d-2}\frac{m-1}{m}>1$ with $m\geq3$. By virtue of Sobolev's inequality, $\frac{2d(m-1)}{d-2}>m$, we have 
$$
\int_0^T \|n^m_\varepsilon\|_{L^{r_2}(\mathbb{R}^d)}^{r_1}dt\leq \int_0^T \|n^{m-1}_{\varepsilon}\|_{L^{\frac{2d}{d-2}}(\mathbb{R}^d)}^{\frac{2d}{(d-2)r_2}r_1}dt\leq \int_0^T \|\nabla n^{m-1}_{\varepsilon}\|_{L^2(\mathbb{R}^d)}^{2}dt\leq C_{m,T}.
$$
When $d=2$, we let $r_1=\frac{2(m-1)}{m(1-\beta)}>1$ and $r_2=2$ with $\beta=\frac{m-1}{2m}\in(0,1)$. Then, the Gagliardo-Nirenberg-Sobolev inequality (Lemma~\ref{t9}) guarantees that
\begin{align*}
\int_0^T \|n^m_\varepsilon\|_{L^{r_2}(\mathbb{R}^d)}^{r_1}dt&=\int_0^T \|n^{m-1}_{\varepsilon}\|_{L^{\frac{2m}{m-1}}(\mathbb{R}^d)}^{r_1\frac{m}{m-1}}dt\\
&\leq \int_0^T \|n^{m-1}_{\varepsilon}\|_{L^{1}(\mathbb{R}^d)}^{r_1\frac{m}{m-1}\beta}\|\nabla n^{m-1}_{\varepsilon}\|_{L^{2}(\mathbb{R}^d)}^{r_1\frac{m}{m-1}(1-\beta)}dt\\
&\leq \|n^{m-1}_{\varepsilon}\|_{L^{\infty}(0,T;L^{1}(\mathbb{R}^d))}^{r_1\frac{m}{m-1}\beta}\int_0^T \|\nabla n^{m-1}_{\varepsilon}\|_{L^{2}(\mathbb{R}^d)}^{2}dt\leq C_{m,T}.
\end{align*}
Thus, \eqref{323} is proved. Since $n_\var$ converges to $n$ a.e. on any compact subset of $[0,T)\times\mathbb{R}^d$ due to \eqref{strong2}, it holds by \eqref{323} that
\begin{equation}\label{nmstrong}
\begin{aligned}
n^{m}_\varepsilon\rightarrow  n^m \quad& \text{strongly~~in ~~} L^1_{{\rm{loc}}}(Q_T)\text{ as }\varepsilon\to0.
\end{aligned}
\end{equation}
Finally, let $p_c>2$ be given by $p_c=4$ for $d=2$ and $p_c=\frac{2d}{d-2}$ for $d\geq3$. Let $\varphi$ be any smooth function supported in $[0,T)\times K$, where $K$ is a compact subset of $\mathbb{R}^d$. Lemmas \ref{lemma21} and \ref{lemma23} imply that $\nabla c_\var$ is uniformly (in $\var$) bounded in $L^2(0,T;L^2(\mathbb{R}^d)\cap L^{p_c}(\mathbb{R}^d))$. Consequently, we have
\begin{equation}\label{nonlinearstrong}
\begin{aligned}
&\Big|\iint_{Q_T}n_\varepsilon\chi(c_\varepsilon) \nabla( J_{\varepsilon}\ast c_\varepsilon)\cdot \nabla\varphi \,dxdt-\iint_{Q_T}\chi(c)n \nabla c\cdot \nabla\varphi \,dx dt\Big|\\
&\quad\leq \|\chi(c_\varepsilon)- \chi(c)\|_{L^{\frac{2 p_c}{p_c-2}}(0,T;L^{\frac{2 p_c}{p_c-2}}(K))} \|n_\varepsilon\|_{L^{\infty}(0,T;L^2(\mathbb{R}^d))} \|\nabla c_\varepsilon\|_{L^2(0,T;L^{p_c}(\mathbb{R}^d))}\|\nabla\varphi\|_{L^{\infty}(Q_T)}\\
&\quad\quad+\|\chi(c)\|_{L^{\infty}(Q_T)} \|n_\varepsilon- n\|_{L^2(0,T;L^2(K))} \|\nabla c_\varepsilon\|_{L^2(Q_T)} \|\nabla\varphi\|_{L^{\infty}(Q_T)}\\
&\quad\quad+\|\chi(c)\|_{L^{\infty}(Q_T)} \|n\|_{L^{2}(Q_T)} \|\nabla c_\varepsilon  -\nabla c\|_{L^2(0,T;L^2(K))} \|\nabla\varphi\|_{L^{\infty}(Q_T)}\\
&\quad\quad+\|\chi(c)\|_{L^{\infty}(Q_T)} \|n\|_{L^{2}(Q_T)} \|\nabla( J_{\varepsilon}\ast c)-\nabla c\|_{L^2(Q_T)}\|\nabla\varphi\|_{L^{\infty}(Q_T)}\\
&\quad \rightarrow 0\text{ as }\varepsilon\to0.
\end{aligned}
\end{equation}
Combining the convergence results  \eqref{weak}-\eqref{nonlinearstrong}, we prove that in the sense of distributions, the approximate equations \eqref{app} converge to the original equations \eqref{eq1-1} as $\varepsilon\rightarrow 0$, and the limit $(n, c, u)$ is a global  weak solution to the Cauchy problem \eqref{eq1-1}-\eqref{d} in the sense of Definition \ref{ws}.  Indeed, with the uniform estimates \eqref{211}-\eqref{214}, \eqref{PL1var}-\eqref{nm1var} and \eqref{uL2var}-\eqref{nablacL2var} and Fatou's property, the limit $(n, c, u)$ has the regularity properties \eqref{r1} and \eqref{basep}. 
\hfill $\Box$


\section{Hele-Shaw limit} \label{sec:hs}
We first establish some uniform-in-$m$ regularity estimates for the solution $(n_m,c_m,u_m,P_m,\Pi_m)$ of the Cauchy problem \eqref{eqm}. Then, based on these uniform bounds, we verify some desired weak and strong convergence properties and justify the validity of the Hele-Shaw limit as $m\rightarrow \infty$. In addition, we choose a suitable test function acting on the resulting Hele-Shaw type system, which leads to the complementarity relation. 


\subsection{Uniform estimates}

The key uniform-in-$m$ regularity estimates are given as follows. 

\begin{prop}[Uniform regularity estimates]\label{propuee} Let $d\geq 2$, $m\geq\max\{d+1,5\}$. Let the assumptions \eqref{ca}, \eqref{a1} and \eqref{H3} on $f, \chi, \phi$ and the initial data $(n_{m,0},c_{m,0},u_{m,0})$ hold, and let $(n_m,c_m,u_m,P_m,\Pi_m)$ be the global weak solution to the Cauchy problem \eqref{eqm} obtained in Theorem \ref{thm1}. Let $T>0$ be any given time. Then, for $t\in[0,T]$, $(n_m,c_m,u_m,P_m,\Pi_m)(t)$ fulfills the following properties:
\begin{itemize}
\item  {\rm(}Uniform estimates of $n_m$ and $P_m${\rm)}: 
\[
\begin{array}{@{}l@{\qquad\qquad}l@{}}
\|n_m(t)\|_{L^1(\mathbb{R}^d)\cap L^{m+1}(\mathbb{R}^d)}\le C_T,
&
\|n_m^m\|_{L^2(0,T;H^1(\mathbb{R}^d))}\le C_T,\\[0.3em]
m\|(n_m-1)_+\|_{L^2(Q_T)}^2\le C_T,
&
\|P_m\|_{L^2(0,T;H^1(\mathbb{R}^d))}\le C_T.
\end{array}
\]
\item  {\rm(}Uniform estimates of $c_m${\rm)}:
\[
\begin{array}{@{}l@{\qquad\qquad}l@{}}
\|c_m(t)\|_{L^1(\mathbb{R}^d)}\le \|c_{m,0}\|_{L^1(\mathbb{R}^d)},
&
0\le c_m\le c_B,
\\[0.3em]
\|c_m(t)\|_{H^1(\mathbb{R}^d)}\le C_T,
&
\|\nabla c_m\|_{L^2(0,T;H^1(\mathbb{R}^d))}\le C_T.
\end{array}
\]
\item {\rm(}Uniform estimates of $u_m$ and $\Pi_m${\rm)}:
\[
\begin{array}{@{}l@{\qquad\qquad}l@{}}
\|u_m(t)\|_{L^2(\mathbb{R}^d)}\le C_T,
&
\|\nabla u_m\|_{L^2(Q_T)}\le C_T,
\\[0.3em]
\|\Pi_m\|_{L^{\frac{2(d-1)}{d}}(0,T;L^{\frac{d-1}{d-2}}(\mathbb{R}^d))}\le C_T,
&
d\ge 3,
\\[0.3em]
\|\Pi_m\|_{L^{\frac32}(0,T;L^3(\mathbb{R}^2))}\le C_T,
&
d=2.
\end{array}
\]
\item {\rm(}Uniform estimates of time derivatives{\rm)}:
\begin{align*}
&\|\partial_t n_m\|_{L^2(0,T;\dot{H}^{-1}(\mathbb{R}^d))}\leq C_T,&&\\
&\|\partial_tc_m\|_{L^2(0,T;\dot{H}^{-1}(\mathbb{R}^d))}+\|\partial_t u_m\|_{L^{\frac{2(d-1)}{d}}(0,T;\dot{W}^{-1,\frac{d-1}{d-2}}(\mathbb{R}^d))}\leq C_T,&&d\geq 3,\\
&\|\partial_tc_m\|_{L^2(Q_T)}+\|\partial_t u_m\|_{L^{\frac{3}{2}}(0,T;\dot{W}^{-1,3}(\mathbb{R}^2))}\leq C_T, &&d=2.
\end{align*}
Here $C_T>0$ is a constant independent of $m$ but depending on $T$.
\end{itemize}
\end{prop}

We split the proof of Proposition \ref{propuee} into Lemmas \ref{lemma41}-\ref{lemma44} below.

First, due to the uniform bounds obtained in Lemmas \ref{lemma21}-\ref{lemma23} and Fatou's property, we have some basic uniform estimates associated with \eqref{r1}.

\begin{lemma}\label{lemma41}
Under the assumptions of Proposition \ref{propuee}, it holds for any $m\geq3$ and $t\in[0,T]$ that
\begin{align}
&\|n_m(t)\|_{L^{1}(\mathbb{R}^d)\cap L^{m-1}(\mathbb{R}^d)}\le C_T,
&& \label{basic1}\\
&\|P_m(t)\|_{L^1(\mathbb{R}^d)}\le C(m-2),
&&\|\nabla P_m\|_{L^2(0,T;L^2(\mathbb{R}^d))}\le C_T,\label{basic2}\\
&\|c_m(t)\|_{L^1(\mathbb{R}^d)}\le \|c_{m,0}\|_{L^1(\mathbb{R}^d)},
&&0\le c_m\le c_B,\label{basic3}\\
&\|c_m(t)\|_{H^1(\mathbb{R}^d)}\le C_T,
&&\|c_m\|_{L^2(0,T;H^2(\mathbb{R}^d))}\le C_T,\label{basic4}\\
&\|u_m(t)\|_{L^{2}(\mathbb{R}^d)}\le C_T,
&&\|u_m\|_{L^2(0,T;H^1(\mathbb{R}^d))}\le C_T.\label{basic5}
\end{align}
\end{lemma}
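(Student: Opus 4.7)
The plan is that Lemma 4.1 follows immediately from the uniform-in-$m$ estimate \eqref{basep} provided by Theorem \ref{thm1}, together with the hypothesis \eqref{a1} on the initial data and the regularity framework \eqref{r1}. The key observation is that every norm appearing in \eqref{basic1}-\eqref{basic5} is already controlled either by the functional $\mathcal{E}(t)$ from \eqref{entropynew} or by the space-time integral in \eqref{basep}; what remains is only to verify that the resulting constants can be taken independent of $m\geq 3$.

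Concretely, I would proceed as follows. First, the pointwise bounds $n_m\geq 0$, $0\leq c_m\leq c_B$ and the conservation-type bounds $\|n_m(t)\|_{L^1}\leq C_0$, $\|c_m(t)\|_{L^1}\leq C_0$ in \eqref{basic1} and \eqref{basic3} are inherited from the $\var$-level bounds \eqref{211}-\eqref{214} through the a.e.\ and weak-$*$ convergences already established in the proof of Theorem \ref{thm1}, and pass to the limit via Fatou's lemma and lower semicontinuity. Next, $\|P_m(t)\|_{L^1}\leq C(m-2)$ and $\|\nabla P_m\|_{L^2(Q_T)}\leq C$ in \eqref{basic2} are direct reformulations of the $\mathcal{E}(t)$-control in \eqref{basep}; the $c_m$-estimates in \eqref{basic4} follow from the $\mathcal{E}(t)$-control on $\|\nabla c_m(t)\|_{L^2}$ together with the $L^2_t$-bound on $\|\Delta c_m\|_{L^2}$ extracted from \eqref{basep}; and \eqref{basic5} for $u_m$ is obtained in the same way.

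The only mild subtlety is the $m$-independence of the constants in the right-hand side of \eqref{basep}. Thanks to \eqref{a1} we have $\|c_0\|_{L^2}$, $\|\nabla c_0\|_{L^2}$ and $\|u_0\|_{L^2}$ bounded by $C_0$ uniformly in $m$, and the sensitive factor $\frac{1}{m-2}\|n_0\|_{L^{m-1}}^{m-1}\leq\frac{1}{m-2}C_0^{m-1}$ has to be treated with care: for $C_0\leq 1$ it is already uniformly bounded, while for $C_0>1$ one can only conclude $\frac{1}{m-2}\|P_m(t)\|_{L^1}\leq C$, which is exactly the form stated in \eqref{basic2}. To recover the $L^{m-1}$-bound on $n_m$ in \eqref{basic1} one takes the $(m-1)$-th root of $\|P_m(t)\|_{L^1}$ and uses the elementary estimate $((m-2)C_0^2+C_0^{m-1})^{1/(m-1)}\to\max\{1,C_0\}$ as $m\to\infty$, exactly as in the argument following \eqref{nm1var} in the proof of Lemma~3.2; the same argument also shows that the bound is uniform on any compact $m$-interval.

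I do not foresee a genuine obstacle here: Lemma \ref{lemma41} is essentially a repackaging of the energy estimate associated with \eqref{entropynew} already built into Theorem \ref{thm1}, and the proof is a short passage from \eqref{r1}-\eqref{basep} to \eqref{basic1}-\eqref{basic5} through the hypotheses \eqref{a1}. No new a priori estimate is required.
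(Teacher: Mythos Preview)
Your proposal is correct and follows essentially the same approach as the paper: the paper's proof of Lemma~\ref{lemma41} is simply the one-line remark that the bounds follow from the uniform estimates in Lemmas~\ref{lemma21}--\ref{lemma23} together with Fatou's property. Your write-up is in fact more detailed than the paper's, correctly identifying the passage to the limit from the $\varepsilon$-level, the role of \eqref{basep}, and the elementary $(m-1)$-th root argument for the $m$-independence of the $L^{m-1}$ bound.
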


Based on Lemma \ref{lemma41}, we have the uniform estimates of the time derivatives $\partial_t c_m$ and $\partial_t u_m$ and the pressure $\Pi_m$.
\begin{lemma}
Under the assumptions of Proposition \ref{propuee}, we have 
\begin{align}
&\|\partial_t c_m\|_{L^2(0,T;\dot{H}^{-1}(\mathbb{R}^d))}\leq C_T,\label{time1}\\
&\|\Pi_m\|_{L^{\frac{2(d-1)}{d}}(0,T;L^{\frac{d-1}{d-2}}(\mathbb{R}^d))}\leq C_T,\label{time2}\\
&\|\partial_t u_m\|_{L^{\frac{2(d-1)}{d}}(0,T;\dot{W}^{-1,\frac{d-1}{d-2}}(\mathbb{R}^d))}\leq C_T,\label{time3}
\end{align}
for $d\geq3$ and
\begin{align}
&\|\partial_t c_m\|_{L^2(Q_T)}\leq C_T,\label{time4}\\
&\|\Pi_m\|_{L^{\frac{3}{2}}(0,T;L^{3}(\mathbb{R}^2))}\leq C_T,\label{time5}\\
&\|\partial_t u_m\|_{L^{\frac{3}{2}}(0,T;\dot{W}^{-1,3}(\mathbb{R}^2))}\leq C_T,\label{time6}
\end{align}
for $d=2$.
\end{lemma}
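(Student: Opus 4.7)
The plan is to read each bound directly off the corresponding PDE in \eqref{eqm} and plug in the uniform bounds collected in Lemma~\ref{lemma41}, the key input being Gagliardo--Nirenberg interpolation between $u_m\in L^\infty_tL^2_x$ and $u_m\in L^2_t\dot H^1_x$. By Sobolev embedding this puts $u_m$ in any $L^q_tL^p_x$ along the energy-scaling curve $\tfrac{2}{q}+\tfrac{d}{p}=\tfrac{d}{2}$; the choice $(p,q)=\bigl(\tfrac{2(d-1)}{d-2},\tfrac{4(d-1)}{d}\bigr)$ in dimension $d\geq 3$ places $|u_m|^2\in L^{p_1}_tL^{q_1}_x$ with the exponents $\bigl(\tfrac{2(d-1)}{d},\tfrac{d-1}{d-2}\bigr)$ appearing in \eqref{time2}--\eqref{time3}, while the two-dimensional analogue comes from Ladyzhenskaya's inequality and yields $|u_m|^2\in L^{3/2}_tL^{3}_x$.

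The estimates \eqref{time1} and \eqref{time4} on $\partial_t c_m$ then follow directly from $\partial_t c_m=\Delta c_m-\nabla\!\cdot(u_mc_m)-n_mf(c_m)$. When $d\geq 3$, \eqref{basic4} puts $\Delta c_m\in L^2_tL^2_x\subset L^2_t\dot H^{-1}_x$; $c_m\in L^\infty$ together with \eqref{basic5} places $u_mc_m\in L^\infty_tL^2_x$ and hence $\nabla\!\cdot(u_mc_m)\in L^2_t\dot H^{-1}_x$; and $n_mf(c_m)$ is controlled by $\|f\|_\infty$ together with the dual Sobolev embedding $L^{2d/(d+2)}_x\hookrightarrow\dot H^{-1}_x$, whose hypothesis is obtained by interpolating the $L^1$ and $L^{m-1}$ bounds of \eqref{basic1} (this is where the threshold $m\geq 2d+1$ enters). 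In $d=2$ the target is the stronger $L^2(Q_T)$ bound: $\Delta c_m\in L^2(Q_T)$ is immediate, the Ladyzhenskaya-type interpolation gives $u_m\in L^4(Q_T)$ and $\nabla c_m\in L^4(Q_T)$ so their product lies in $L^2(Q_T)$, and $\|n_mf(c_m)\|_{L^2(Q_T)}\lesssim\|n_m\|_{L^2(Q_T)}$ is controlled via \eqref{basic2}, which provides $n_m^{m-1}\in L^2(Q_T)$ and hence $n_m\in L^{2(m-1)}(Q_T)$.

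For the fluid pressure $\Pi_m$, applying the Calder\'on--Zygmund representation to \eqref{pressure} yields $\Pi_m=\mathcal R_i\mathcal R_j(u_m^iu_m^j)+(-\Delta)^{-1}\nabla\!\cdot(n_m\nabla\phi)$; the quadratic piece inherits the $L^{p_1}_tL^{q_1}_x$ bound arranged in the first paragraph, while the lower-order piece is handled by a Hardy--Littlewood--Sobolev estimate that turns the $L^\infty_t(L^1\cap L^{m-1})$ bound on $n_m$, together with $\nabla\phi\in L^\infty$, into the required $L^\infty_tL^{q_1}_x$ control. Finally, for $\partial_t u_m$ we use $\partial_t u_m=\Delta u_m-\nabla\!\cdot(u_m\otimes u_m)-n_m\nabla\phi-\nabla\Pi_m$ and bound each contribution in $\dot W^{-1,q_1}$: duality gives $\|\nabla\!\cdot(u_m\otimes u_m)\|_{\dot W^{-1,q_1}}\lesssim\||u_m|^2\|_{L^{q_1}}$, integrating in $L^{p_1}_t$ as above; $\nabla\Pi_m$ is dominated by the just-derived $\Pi_m$ bound; $n_m\nabla\phi$ is handled by Sobolev embedding combined with \eqref{basic1}; and the dissipation $\Delta u_m=\nabla\!\cdot(\nabla u_m)$ is the easiest piece in the regimes $d=2,3$ where $q_1=2$ reduces the claim to $\nabla u_m\in L^{p_1}_tL^2_x$ by H\"older in time from \eqref{basic5}, while for $d\geq 4$ an additional Gagliardo--Nirenberg interpolation of $\nabla u_m$ against the Sobolev-embedding consequence $u_m\in L^2_tL^{2d/(d-2)}_x$ is required. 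The main obstacle is coordinating space--time exponents so that the single scaling pair $(p_1,q_1)$ simultaneously accommodates the Calder\'on--Zygmund estimate for $\Pi_m$, the dissipation in $\partial_t u_m$, and the convective term; once the energy-scaling pair is identified, everything else is a bookkeeping exercise combining H\"older, Sobolev/HLS, and the uniform bounds of Lemma~\ref{lemma41}.
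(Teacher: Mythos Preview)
Your plan matches the paper's proof almost verbatim: each bound is read off the corresponding equation and closed using Lemma~\ref{lemma41}, Gagliardo--Nirenberg interpolation for $|u_m|^2$ along the energy line, Calder\'on--Zygmund/HLS for the pressure representation \eqref{pressure}, and the dual Sobolev embedding $L^{2d/(d+2)}\hookrightarrow\dot H^{-1}$ for $n_mf(c_m)$. Two minor corrections: the $L^{2d/(d+2)}$ bound on $n_m$ only needs $m-1\geq\frac{2d}{d+2}$, hence $m\geq 3$ suffices (the threshold $m\geq 2d+1$ enters later, in Lemma~\ref{lemma43}, not here), and for $d=2$ the bound $n_m\in L^2(Q_T)$ follows directly from \eqref{basic1} by interpolation rather than from \eqref{basic2}.
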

\begin{proof}
We first analyze the time derivative $\partial_t c_m$ like \eqref{time1} for $d\geq 3$ and \eqref{time4} for $d=2$. 
 For the $d$-dimensional case ($d\geq 3$), as $m\geq 3\geq \frac{2d}{d+2}+1$, it holds by the embedding $L^{\frac{2d}{d+2}}(\mathbb{R}^d)\hookrightarrow \dot{H}^{-1}(\mathbb{R}^d)$ that
\begin{equation*}
\begin{aligned}
\|n_mf(c_m)\|_{L^2(0,T;\dot{H}^{-1}(\mathbb{R}^d))}&\leq C\|n_mf(c_m)\|_{L^2(0,T;L^{\frac{2d}{d+2}}(\mathbb{R}^d))}\\
&\leq CT^{1/2} \|f(c_m)\|_{L^{\infty}(Q_T)}\|n_m\|_{L^{\infty}(0,T; L^{\frac{2d}{d+2}}(\mathbb{R}^d))}\\
&\leq C_T.
\end{aligned}
\end{equation*}
This, combined with  $\eqref{eqm}_2$ and \eqref{basic1}-\eqref{basic5}, yields
\begin{equation*}
\begin{aligned}
\|\partial_t c_m\|_{L^2(0,T;\dot{H}^{-1}(\mathbb{R}^d))}^2\leq &C\|\nabla c_m\|_{L^2(Q_T)}^2+C c_B^2\|u_m\|_{L^2(Q_T)}^2+C\|n_mf(c_m)\|_{L^2(0,T;\dot{H}^{-1}(\mathbb{R}^d))}^2\leq C_T,
\end{aligned}
\end{equation*}
for all $d\geq3$.

Thanks to  $\eqref{eqm}_2$ and \eqref{basic1}-\eqref{basic5},  the estimate \eqref{time4} is given by
\begin{equation*}
\begin{aligned}
\|\partial_t c_m\|_{L^2(Q_T)}\leq &\|\Delta c_m\|_{L^2(Q_T)}+\|n_m\|_{L^2(Q_T)}\|f(c_m)\|_{L^{\infty}(Q_T)}+\|u_m\cdot\nabla c_m\|_{L^2(Q_T)}\leq  C_T,\quad d=2,
\end{aligned}
\end{equation*}
where we used the Gagliardo-Nirenberg-Sobolev inequality (Lemma~\ref{t9}) such that
\begin{equation*}
    \begin{aligned}
  \|u_m\cdot\nabla c_m\|_{L^2(Q_T)}^2\leq& \int_0^T \|u_m\|_{L^4(\mathbb{R}^2)}^2\|\nabla c_m\|_{L^4(\mathbb{R}^2)}^2\,dt\\
  &\leq C\|u_m\|_{L^{\infty}(0,T;L^2(\mathbb{R}^2))}\|\nabla u_m\|_{L^2(Q_T)} \|\nabla c_m\|_{L^{\infty}(0,T;L^2(\mathbb{R}^2))} \|\nabla^2 c_m\|_{L^2(Q_T)} \leq C_T.
    \end{aligned}
\end{equation*}
 Consequently, \eqref{time1} and \eqref{time4} hold.

 Next, the step is to deal with \eqref{time2}-\eqref{time3} for $d\geq3$ and \eqref{time5}-\eqref{time6} for $d=2$.
Hence, by taking the divergence operator on $\eqref{eqm}_3$, we get 
\begin{equation*}
-\Delta \Pi_m
=
\nabla\cdot\nabla\cdot(u_m\otimes u_m)
+
\nabla\cdot(n_m\nabla\phi).
\end{equation*}
Due to the singular integral theory, $\nabla^2(-\Delta)^{-1}=(-\Delta)^{-1}\nabla^2$ maps $L^p(\mathbb{R}^d)$ to $L^p(\mathbb{R}^d)$ for any $1<p<\infty$.  Furthermore, for $d\geq3$, in view of Lemma~\ref{t9} and Sobolev's inequality,  we have
\begin{equation*}
\begin{aligned}
\|u_m^2&\|_{L^{\frac{d-1}{d-2}}(\mathbb{R}^d)}+\|\Pi_m\|_{L^{\frac{d-1}{d-2}}(\mathbb{R}^d)}\\
\leq& \|u_m^2\|_{L^{\frac{d-1}{d-2}}(\mathbb{R}^d)}+\|(-\Delta)^{-1}\nabla\cdot\nabla\cdot(u_m\otimes u_m)\|_{L^{\frac{d-1}{d-2}}(\mathbb{R}^d)}+\|(\Delta)^{-1}\nabla\cdot(n_m\nabla \phi)\|_{L^{\frac{d-1}{d-2}}(\mathbb{R}^d)}\\
\leq& C\|u_m^2\|_{L^{\frac{d-1}{d-2}}(\mathbb{R}^d)}+C\|n_m\nabla\phi \|_{L^{\frac{d^2-d}{d^2-d-1}}(\mathbb{R}^d)}\\
\leq & C\|u_m\|_{L^2(\mathbb{R}^d)}^{\frac{d-2}{d-1}}\|\nabla u_m\|_{L^2(\mathbb{R}^d)}^{\frac{d}{d-1}}+C\|n_m\|_{L^{\frac{d^2-d}{d^2-d-1}}(\mathbb{R}^d)}.
\end{aligned}
\end{equation*}
Since $
\frac{d^2-d}{d^2-d-1}\le 2\leq m-1$, it follows from  \eqref{basic1} and  \eqref{basic5} that
\begin{equation}\label{cp1}
     \begin{aligned}
\|u_m^2&\|_{L^{\frac{2(d-1)}{d}}(0,T;L^{\frac{d-1}{d-2}}(\mathbb{R}^d))}+\|\Pi_m\|_{L^{\frac{2(d-1)}{d}}(0,T;L^{\frac{d-1}{d-2}}(\mathbb{R}^d))}\\
\leq& C\|\nabla u_m\|_{L^2(Q_T)}^{\frac{d}{d-1}}+C\|n_m\|_{L^{\frac{2(d-1)}{d}}(0,T;L^{\frac{d^2-d}{d^2-d-1}}(\mathbb{R}^d))}\leq C_T.
     \end{aligned}
 \end{equation}
 Together with the equation of $u_m$, i.e., $\eqref{eqm}_3$, this gives rise to
\begin{equation}\label{ptdm}
\begin{aligned}
\|\partial_t u_m&\|_{L^{\frac{2(d-1)}{d}}(0,T;\dot{W}^{-1,\frac{d-1}{d-2}}(\mathbb{R}^d))}\\
\leq& C\| \Pi_m\|_{L^{\frac{2(d-1)}{d}}(0,T;L^{\frac{d-1}{d-2}}(\mathbb{R}^d))}+C\|u_m^2 \|_{L^{\frac{2(d-1)}{d}}(0,T;L^{\frac{d-1}{d-2}}(\mathbb{R}^d))}\\
&+C\|n_m\nabla\phi \|_{L^{\frac{2(d-1)}{d}}\big(0,T;L^{\frac{d^2-d}{d^2-d-1}}(\mathbb{R}^d)\big)}\leq C_T,
\end{aligned}
\end{equation}
where we used Sobolev's inequality.

The case $d=2$ can be handled in the same manner. Indeed, similar calculations lead to
\begin{equation*}
    \begin{aligned}
\|u_m^2&\|_{L^3(\mathbb{R}^2)}+\|\Pi_m\|_{L^3(\mathbb{R}^2)}\\
\leq&\|u_m^2\|_{L^3(\mathbb{R}^2)}+ \|(-\Delta)^{-1}\nabla\cdot\nabla\cdot(u_m\otimes u_m)\|_{L^3(\mathbb{R}^2)}+\|(-\Delta)^{-1}\nabla\cdot(n_m\nabla \phi)\|_{L^3(\mathbb{R}^2)}\\
\leq& C\|u_m^2\|_{L^3(\mathbb{R}^2)}+C\|n_m\nabla\phi \|_{L^{\frac{6}{5}}(\mathbb{R}^2)}\\
\leq & C\big(\|u_m\|_{L^2(\mathbb{R}^2)}\big)^{\frac{2}{3}}\big(\|\nabla u_m\|_{L^2(\mathbb{R}^2)}\big)^{\frac{4}{3}}+C\|n_m\|_{L^{\frac{6}{5}}(\mathbb{R}^2)},
    \end{aligned}
\end{equation*}
and
 \begin{equation}\label{cp22}
     \begin{aligned}
\|u_m^2&\|_{L^{\frac{3}{2}}(0,T;L^3(\mathbb{R}^2))}+\|\Pi_m\|_{L^{\frac{3}{2}}(0,T;L^3(\mathbb{R}^2))}\\
\leq& C_T\big(\|\nabla u_m\|_{L^2(Q_T)}\big)^{\frac{4}{3}}+C\|n_m\|_{L^{\frac{3}{2}}(0,T;L^{\frac{6}{5}}(\mathbb{R}^2))}\leq C_T.
     \end{aligned}
 \end{equation}
Hence, it follows that 
\begin{equation}\label{cp222}
    \begin{aligned}
\|\partial_t u_m\|_{L^{\frac{3}{2}}(0,T;\dot{W}^{-1,3}(\mathbb{R}^2))}\leq& C \|u_m^2\|_{L^{\frac{3}{2}}(0,T;L^3(\mathbb{R}^2))}+\|\Pi_m\|_{L^{\frac{3}{2}}(0,T;L^3(\mathbb{R}^2))}\\
&+C\|n_m\|_{L^{\frac{3}{2}}(0,T;L^{\frac{6}{5}}(\mathbb{R}^2))}\leq C_T.
    \end{aligned}
\end{equation}
Combining \eqref{cp1},\eqref{ptdm}, \eqref{cp22} and \eqref{cp222}, we immediately
 infer \eqref{time2}-\eqref{time3} for $d\geq3$ and \eqref{time5}-\eqref{time6} for $d=2$.
\end{proof}

However, the uniform estimates \eqref{basic1}-\eqref{basic5} are not sufficient to carry out a compactness process for justifying the singular limit as $m\rightarrow\infty$ due to the lack of uniform regularity estimates for $\partial_t n_m$ and higher integrability of $n_m$. To overcome this difficulty, we need to establish additional regularity estimates of $n_m$.

\begin{lemma}\label{lemma43}Under the assumptions of Proposition \ref{propuee}, it holds for $m\geq\max\{d+1,5\}$ and $t\in[0,T]$ that
\begin{align}
&\|n_m(t)\|_{L^{m+1}(\mathbb{R}^d)}+\|\nabla n_m^m\|_{L^2(Q_T)}\leq C_T,\label{high1}\\
& \|P_m\|_{L^2(Q_T)}+ \|n_m^m\|_{L^2(Q_T)}\leq C_T,\label{high4}\\
&m\|(n_m-1)_+\|_{L^2(Q_T)}^2\leq C_T.\label{high3}
\end{align}
\end{lemma}

\begin{proof}
To prove \eqref{high1}, multiplying $\eqref{eqm}_1$ by $n_m^m$ and integrating the resulting equation over $Q_T$, we obtain
\begin{equation*}
    \begin{aligned}
\frac{1}{m+1}\sup\limits_{0\leq t\leq T}&\|n_m(t)\|_{L^{m+1}(\mathbb{R}^d)}^{m+1}+\|\nabla n_m^m\|_{L^2(Q_T)}^2\\
&\leq 
\frac{1}{4}\|\nabla n_m^m\|_{L^2(Q_T)}^2+\|n_m\chi(c_m)\nabla c_m\|_{L^2(Q_T)}^2+\frac{1}{m+1}\|n_{m,0}\|_{L^{m+1}(\mathbb{R}^d)}^{m+1},
    \end{aligned}
\end{equation*}
where the Cauchy-Schwarz inequality was used.
As $m-1\geq d$, one deduces from \eqref{basic1}, \eqref{basic4} and Sobolev's inequality that
\begin{equation*}
    \begin{aligned}
\|n_m\chi(c_m)\nabla c_m\|_{L^2(Q_T)}^2\leq& C\iint_{Q_T}n_m^2|\nabla c_m|^2  dxdt\\
\leq& C\int_{0}^T\hskip-4pt\Big(\int_{\mathbb{R}^d}n_m^ddx\Big)^{\frac{2}{d}}\Big(\int_{\mathbb{R}^d}|\nabla c_m|^{\frac{2d}{d-2}}\,dx\Big)^{\frac{d-2}{d}}\,dt\\
\leq&C\iint_{Q_T}|\nabla^2 c_m|^2  dxdt\leq C_T\text{ for }d\geq3.
    \end{aligned}
\end{equation*}
In the case $d=2$ and $m\geq 5$, we use the uniform bound of $n_m$ in
$L^\infty(0,T;L^1(\mathbb{R}^2)\cap L^{m-1}(\mathbb{R}^2))$ to obtain
\begin{equation*}
    \begin{aligned}
\|n_m\chi(c_m)\nabla c_m\|_{L^2(Q_T)}^2
\leq& C\int_{0}^T\hskip-4pt\Big(\int_{\mathbb{R}^2}n_m^4dx\Big)^{\frac{1}{2}}\Big(\int_{\mathbb{R}^2}|\nabla c_m|^{4}\,dx\Big)^{\frac{1}{2}}\,dt\\
\leq&C_T\int_{0}^T\hskip-4pt\Big(\int_{\mathbb{R}^2}|\nabla c_m|^2\,dx\Big)^{\frac{1}{2}}\Big(\int_{\mathbb{R}^2}|\nabla^2 c_m|^2\,dx\Big)^{\frac{1}{2}}\,dt\leq C_T.
    \end{aligned}
\end{equation*} 
Hence, we end up with \eqref{high1}.

 We turn to verify \eqref{high4}.  In the case $d\geq3$, it holds by \eqref{basic2}, \eqref{high1} and Sobolev's inequality that 
\begin{equation*}
\begin{aligned}
\|n_m^{m}\|_{L^2(Q_T)}^2&\leq C\iint_{Q_T}n_m^{2}P_m^2  dxdt\\
&\leq C \int_0^T\hskip-4pt(\int_{\mathbb{R}^d}n_m^ddx\big)^{\frac{2}{d}}\big(\int_{\mathbb{R}^d}P_m^{\frac{2d}{d-2}}\,dx\big)^{\frac{d-2}{d}}\,dt\\
&\leq C\iint_{Q_T}|\nabla P_m|^2  dxdt\leq C_T,\quad m+1\geq d.
\end{aligned}
\end{equation*}
By means of Young's inequality, we further obtain 
\begin{equation*}
\begin{aligned}
\iint_{Q_T}P_m^{2}  dxdt&\leq C\iint_{Q_T}n_m^{2m-2}  dxdt\\
&=C\iint_{Q_T}n_m^{\frac{2}{2m-1}}n_m^{2m\frac{2m-3}{2m-1}}  dxdt\\
&\leq \frac{2}{2m-1}C\iint_{Q_T}n_m  \,dxdt+ \frac{2m-3}{2m-1}C\iint_{Q_T}n_m^{2m}  dxdt\leq C_T,\quad d\geq3.
\end{aligned}
\end{equation*}
Consequently, we have \eqref{high4} for $d\geq3$.

 As for the two-dimensional case, the $L^p(\mathbb{R}^2)$-norm will not be achieved from Sobolev's embedding associated with $\dot{H}^1(\mathbb{R}^2)$. To overcome this difficulty, we observe that $\eqref{211}$ implies for sufficiently large $R$ that  
\[|\{n_m\geq 1\}\cap B_R|\leq \int_{B_R}n_m\,dx\leq \int_{\mathbb{R}^2}n_m\,dx\leq \int_{\mathbb{R}^2}n_{m,0}\,dx=:M,\]
which implies 
\[|\{n_m<1\}\cap B_R|=|B_R|-|\{n_m\geq 1\}\cap B_R|\geq |B_R|-M\geq \pi R^2-M \sim R^2 ~~\text{ for}\quad R\gg 1.\]
It is obvious that
\[(n_m^{m-1}-1)_+=0\quad\text{in }S:=\{n_m<1\}\cap B_R.\]
Furthermore, using Lemma~\ref{ei} with $S$, we deduce for $R\gg1$ that 
\begin{equation*}
    \begin{aligned}
\|(n_m^{m-1}-1)_+\|_{L^2(B_R)}&\leq \frac{CR}{|S|}\|\nabla(n_m^{m-1}-1)_+\|_{L^1(B_R)}\leq \frac{CR}{|S|}\|\nabla n_m^{m-1}\|_{L^1(B_R)}\\
&\leq \frac{C}{R}\|\nabla n_m^{m-1}\|_{L^2(B_R)}|B_R|^{\frac{1}{2}}\leq C\|\nabla n_m^{m-1}\|_{L^2(\mathbb{R}^2)}. 
    \end{aligned}
\end{equation*}
Letting $R\to\infty$, we take the $L^2$ integral in time and use \eqref{basic2} to obtain
\begin{equation*}
 \|(n_m^{m-1}-1)_+\|_{L^2(Q_T)}\leq C\|\nabla n_m^{m-1}\|_{L^2(Q_T)}\leq C_T.   
\end{equation*}
As a direct consequence, it holds by $\min\{n_m^{m-1},1\}\leq n_m$ for $m\geq2$ that  
\begin{equation*}
\begin{aligned}
\|P_m\|_{L^2(Q_T)}
\leq&
\frac{m}{m-1}\big(\|(n_m^{m-1}-1)_+\|_{L^2(Q_T)}+\|\min\{n_m^{m-1},1\}\|_{L^2(Q_T)}\big)
\\
\leq&
\frac{m}{m-1}\big(\|(n_m^{m-1}-1)_+\|_{L^2(Q_T)}+\|n_m\|_{L^2(Q_T)}\big)\\
\leq &C_T.
\end{aligned}
\end{equation*}
Similarly, we infer from \eqref{high1} that
\begin{equation*}
\|n_m^m\|_{L^2(Q_T)}\leq  \|(n_m^m-1)_+\|_{L^2(Q_T)}+\|n_m\|_{L^2(Q_T)}\leq C\|\nabla n_m^{m}\|_{L^2(Q_T)}+\|n_m\|_{L^2(Q_T)}\leq  C_T.
\end{equation*}
Hence, \eqref{high4} for $d=2$ is justified.

We use Taylor's expansion of $n_m^{2m}$ around $1$ and get 
\begin{equation*}
n_m^{2m}\geq \frac{2m(2m-1)}{2}(n_m-1)_+^2.
\end{equation*}
Then, it follows from \eqref{high4} that
\begin{equation}\label{nm-1}
\|(n_m-1)_+\|_{L^2(Q_T)}^2\leq \frac{2}{2m(2m-1)}\|n_m^{m}\|_{L^2(Q_T)}^2\leq \frac{C_T}{m^2}.
\end{equation}
Hence, \eqref{high3} holds.
\end{proof}

With the aid of Lemmas \ref{lemma41} and \ref{lemma43}, we obtain the uniform estimate for the time derivative $\partial_t n_m$ as follows.

\begin{lemma}\label{lemma44}Under the assumptions of Proposition \ref{propuee}, for any  $m\geq\max\{d+1,5\}$, the following estimate holds:
\begin{equation}\label{uwe}
\|\partial_t n_m\|_{L^2(0,T;\dot{H}^{-1}(\mathbb{R}^d))}\leq C_T.
\end{equation}
\end{lemma}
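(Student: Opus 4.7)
The strategy is to recast $\partial_t n_m$ in divergence form and exploit duality $\dot H^{-1}(\mathbb R^d)=(\dot H^{1}(\mathbb R^d))^{*}$, which reduces the desired bound to $L^{2}(Q_T)$-control of three flux terms, all of which have already been obtained in Lemmas~\ref{lemma41} and~\ref{lemma43}.

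First, using $\nabla\cdot u_m=0$, rewrite the $n_m$-equation in $\eqref{eqm}$ as
\[
\partial_t n_m=\nabla\cdot\bigl(\nabla n_m^{m}-\chi(c_m)\,n_m\nabla c_m-u_m n_m\bigr).
\]
For any $\phi\in \dot H^{1}(\mathbb R^d)$ with $\|\nabla\phi\|_{L^2}\leq1$, integration by parts and Cauchy--Schwarz in $x$ give
\[
|\langle\partial_t n_m(t),\phi\rangle|\le \|\nabla n_m^{m}(t)\|_{L^2}+\|\chi(c_m)n_m\nabla c_m(t)\|_{L^2}+\|u_m n_m(t)\|_{L^2},
\]
so after squaring and integrating on $[0,T]$ it suffices to bound each of the three quantities on the right in $L^{2}(Q_T)$ by a constant $C_T$.

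Next, I would invoke the bounds already at hand. The first term satisfies $\|\nabla n_m^m\|_{L^2(Q_T)}\leq C_T$ directly from \eqref{high1}. The second term was in fact computed inside the proof of Lemma~\ref{lemma43}: combining $\chi\in L^\infty([0,c_B])$, the $L^\infty(0,T;L^d(\mathbb R^d))$-bound (resp.\ $L^\infty(0,T;L^4(\mathbb R^2))$-bound) of $n_m$ from \eqref{basic1} and \eqref{high1} together with the $L^{2}(0,T;L^{2d/(d-2)}(\mathbb R^d))$-bound (resp.\ $L^{2}(0,T;L^4(\mathbb R^2))$-bound) of $\nabla c_m$ from \eqref{basic4} and Sobolev embedding yields
\[
\|\chi(c_m)n_m\nabla c_m\|_{L^2(Q_T)}^{2}\leq C_T,
\]
as long as $m-1\geq d$ in dimension $d\geq3$ and $m-1\geq 4$ in dimension $d=2$, both of which are ensured by $m\geq \max\{2d+1,5\}$.

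The remaining term $u_m n_m$ is the only one requiring a fresh estimate. For $d\geq3$, Hölder and Sobolev give
\[
\|u_m n_m\|_{L^2}^{2}\leq \|u_m\|_{L^{2d/(d-2)}}^{2}\|n_m\|_{L^{d}}^{2}\leq C\|\nabla u_m\|_{L^2}^{2}\|n_m\|_{L^{d}}^{2},
\]
and integrating in time together with \eqref{basic5} and the interpolation $\|n_m\|_{L^\infty(0,T;L^d)}\leq C_T$ furnished by \eqref{basic1} and \eqref{high1} (valid since $m+1\geq 2d+2\geq d$) produces the required $C_T$-bound. For $d=2$, Gagliardo--Nirenberg gives $\|u_m\|_{L^4}^{2}\leq C\|u_m\|_{L^2}\|\nabla u_m\|_{L^2}$, so
\[
\|u_m n_m\|_{L^2(Q_T)}^{2}\leq C\int_{0}^{T}\|u_m\|_{L^{2}}\|\nabla u_m\|_{L^{2}}\|n_m\|_{L^{4}}^{2}\,dt\leq C_T,
\]
where $n_m\in L^\infty(0,T;L^4(\mathbb R^2))$ is a consequence of \eqref{high1} under $m\geq 5$. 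Assembling the three estimates yields \eqref{uwe}. The only mild subtlety is choosing Hölder exponents that match the available $L^p$-bounds on $n_m$, which is precisely what dictates the condition $m\geq\max\{2d+1,5\}$ invoked throughout Proposition~\ref{propuee}; no single step poses a genuine obstacle given the previous lemmas.
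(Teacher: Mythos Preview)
Your proposal is correct and follows essentially the same approach as the paper: rewrite $\partial_t n_m$ in divergence form using $\nabla\cdot u_m=0$, then bound each of the three flux terms $\nabla n_m^{m}$, $\chi(c_m)n_m\nabla c_m$, and $u_m n_m$ in $L^{2}(Q_T)$ via the same H\"older/Sobolev (for $d\ge3$) and Gagliardo--Nirenberg (for $d=2$) pairings with the bounds from Lemmas~\ref{lemma41} and~\ref{lemma43}. The paper treats the second and third terms together in a single line rather than separately, but the estimates and the exponent choices are identical to yours.
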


\begin{proof}
By means of the equation~$\eqref{eqm}_1$,  one has
\begin{equation}\label{120000}
\begin{aligned}
\|\partial_t n_m&\|_{L^2(0,T;\dot{H}^{-1}(\mathbb{R}^d))}\\
\leq&\|\Delta n_m^m\|_{L^2(0,T;\dot{H}^{-1}(\mathbb{R}^d))}+\|\nabla\cdot(n_m\chi(c_m)\nabla c_m)\|_{L^2(0,T;\dot{H}^{-1}(\mathbb{R}^d))}+ 
\|\nabla\cdot(u_m n_m)\|_{L^2(0,T;\dot{H}^{-1}(\mathbb{R}^d))}
\\
\leq &C\|\nabla n_m^m\|_{L^2(Q_T)}+C\|n_m\chi(c_m)\nabla c_m\|_{L^2(Q_T)}+C\|u_mn_m\|_{L^2(Q_T)},
\end{aligned}
\end{equation}
where  we used $u_m\cdot\nabla n_m=\nabla\cdot(u_m n_m)$ derived from $\nabla\cdot u_m=0$.

In accordance with \eqref{basic1}, \eqref{basic3}, \eqref{basic5} and the
Gagliardo-Nirenberg-Sobolev inequality (Lemma~\ref{t9}), it holds that, for
$d\geq3$,
\begin{equation*}
\begin{aligned}
&\|n_m\chi(c_m)\nabla c_m\|_{L^2(Q_T)}
+\|u_mn_m\|_{L^2(Q_T)}\\
&\quad\leq
C\|n_m\|_{L^{\infty}(0,T;L^{d}(\mathbb{R}^d))}
\Big(
\|\nabla c_m\|_{L^2(0,T;L^{\frac{2d}{d-2}}(\mathbb{R}^d))}
+\|u_m\|_{L^2(0,T;L^{\frac{2d}{d-2}}(\mathbb{R}^d))}
\Big)\\
&\quad\leq
C\|n_m\|_{L^{\infty}(0,T;L^{d}(\mathbb{R}^d))}
\Big(
\|\nabla^2 c_m\|_{L^2(0,T;L^2(\mathbb{R}^d))}
+\|\nabla u_m\|_{L^2(0,T;L^2(\mathbb{R}^d))}
\Big)
\leq C_T.
\end{aligned}
\end{equation*}
Here we used $d\leq m-1$, which follows from $m\geq d+1$, and hence
\[
\|n_m\|_{L^\infty(0,T;L^d(\mathbb{R}^d))}\leq C_T
\]
by interpolation between $L^1(\mathbb{R}^d)$ and
$L^{m-1}(\mathbb{R}^d)$.

For $d=2$, by \eqref{basic1}, \eqref{high1}, \eqref{basic3}, \eqref{basic4},
\eqref{basic5} and the two-dimensional Gagliardo-Nirenberg inequality, we have
\begin{equation*}
\begin{aligned}
\|n_m\|_{L^\infty(0,T;L^4(\mathbb{R}^2))}
\leq C_T.
\end{aligned}
\end{equation*}
Indeed, since $m+1\geq4$ for $m\geq3$, this follows from the interpolation
between $L^1(\mathbb{R}^2)$ and $L^{m+1}(\mathbb{R}^2)$, together with
\eqref{basic1} and \eqref{high1}. Therefore,
\begin{equation*}
\begin{aligned}
&\|n_m\chi(c_m)\nabla c_m\|_{L^2(Q_T)}
+\|u_mn_m\|_{L^2(Q_T)}\\
&\quad\leq
C\|n_m\|_{L^{\infty}(0,T;L^{4}(\mathbb{R}^2))}
\Big(
\|\nabla c_m\|_{L^2(0,T;L^{4}(\mathbb{R}^2))}
+\|u_m\|_{L^2(0,T;L^{4}(\mathbb{R}^2))}
\Big)\\
&\quad\leq
C\|n_m\|_{L^{\infty}(0,T;L^{4}(\mathbb{R}^2))}
T^{\frac{1}{4}}
\Big(
\|\nabla c_m\|_{L^{\infty}(0,T;L^2(\mathbb{R}^2))}^{\frac{1}{2}}
\|\nabla^2 c_m\|_{L^2(Q_T)}^{\frac{1}{2}}\\
&\quad\quad
+\|u_m\|_{L^{\infty}(0,T;L^{2}(\mathbb{R}^2))}^{\frac{1}{2}}
\|\nabla u_m\|_{L^2(Q_T)}^{\frac{1}{2}}
\Big)
\leq C_T.
\end{aligned}
\end{equation*}
Putting the above estimates and \eqref{high1} into \eqref{120000} leads to
\eqref{uwe}.
\end{proof}

\subsection{Proof of the Hele-Shaw limit} 
We are going to prove Theorem \ref{thm2}. Based on the uniform regularity estimates obtained in Proposition \ref{propuee}, we can obtain the corresponding convergences with respect to $m$ in \eqref{uml}, in which the limits satisfy the Hele-Shaw type system \eqref{eq1-4}-\eqref{ilr}.

\noindent
\begin{proof}[\textbf{\underline{Justification of \eqref{uml}}}] We first explain $\eqref{uml}_1$-$\eqref{uml}_3$ of $c_m$, $u_m$ and $\Pi_m$. Let $(p_1,q_1):=(\frac{2(d-1)}{d},\frac{d-1}{d-2})$ for $d\geq3$ and $(p_1,q_1)=(\frac{3}{2},3)$ for $d=2$. The uniform estimates obtained in Proposition \ref{propuee} indicate that there exist $c_\infty$, $u_\infty$ and $\Pi_\infty$ such that as $m\rightarrow \infty$, up to subsequences, it holds that
\begin{align}
c_m \rightharpoonup c_{\infty}\quad
&\text{weakly}\quad \text{in}\quad L^2(0,T;H^2(\mathbb{R}^d)),\label{441}\\
u_m \rightharpoonup u_{\infty}\quad
&\text{weakly}\quad \text{in}\quad L^2(0,T;H^1(\mathbb{R}^d)),\label{uweak}\\
\Pi_m \rightharpoonup \Pi_{\infty}\quad
&\text{weakly}\quad \text{in}\quad L^{p_1}(0,T;L^{q_1}(\mathbb{R}^d)),\label{piweak}
\end{align}
which proves $\eqref{uml}_3$. In light of the time derivative estimates in Proposition \ref{propuee} and the Aubin-Lions-Simon lemma (Lemma \ref{lemmaAubin}), as $m\rightarrow \infty$, one also has
\begin{align}
c_m \to c_{\infty}\quad
&\text{strongly}\quad \text{in}\quad L^2(0,T;W^{1,p}_{\mathrm{loc}}(\mathbb{R}^d)),\label{cH1}\\
u_m \to u_{\infty}\quad
&\text{strongly}\quad \text{in}\quad L^2(0,T;L^2_{\mathrm{loc}}(\mathbb{R}^d)),\label{uL2}
\end{align}
for any $p\in(1,\frac{2d}{d-2})$, and thus $\eqref{uml}_{1-2}$ is verified. In addition, since $c_m$ is uniformly bounded in $L^{\infty}(0,T;H^1(\mathbb{R}^d))$, employing the Aubin-Lions-Simon lemma (Lemma \ref{lemmaAubin}) again leads to
\begin{align}
c_m\rightarrow c_\infty\quad &\text{strongly~~in ~~} \mathcal{C}([0,T];L_{\rm{loc}}^{2}(\mathbb{R}^d)),\label{cH0}
\end{align}
which, together with the fact that $0\leq c_m, c_\infty\leq c_B$ and $L^q$ interpolation, yields
\begin{align}
c_m \rightarrow c_{\infty}\quad
&\text{strongly}\quad \text{in}\quad L^{\infty}(0,T;L^q_{\mathrm{loc}}(\mathbb{R}^d)),\label{447}\\
\chi(c_m)\rightarrow \chi(c_{\infty})\quad
&\text{strongly}\quad \text{in}\quad L^{\infty}(0,T;L^q_{\mathrm{loc}}(\mathbb{R}^d)),\label{cH2}
\end{align}
for any $q\in[1,\infty)$.

Next, we are in a position to prove the convergence property $\eqref{uml}_{4-5}$. 
Due to the uniform bounds in Proposition \ref{propuee}, after the extraction of subsequences, there exist two limits $P_\infty$ and $Q_\infty$ in $L^2(0,T;H^1(\mathbb{R}^d))$ such that $P_m$ and $n_m^{m}$, respectively, converge weakly to $P_\infty$ and $Q_\infty$ in $L^2(0,T;H^1(\mathbb{R}^d))$ as $m\to\infty$. By Young's inequality, we discover
\begin{equation*}
    P_m\leq \Big(\frac{m}{m-1}\Big)^{m}\frac{1}{m}+\frac{m-1}{m}n_m^{m},
\end{equation*}
which implies 
\begin{equation*}
    P_\infty\leq Q_\infty.
\end{equation*}
Conversely, for any $\eta>0$, we have
\begin{equation*}
n_m^{m}=\chi_{\{n_m<1+\eta\}}n_m^{m}+\chi_{\{n_m\geq 1+\eta\}}n_m^{m}
\leq (1+\eta)\frac{m-1}{m}P_m+\frac{n_m^{2m}}{(1+\eta)^{m}}.
\end{equation*}
Using the uniform $L^1(Q_T)$-bound of $n_m^{2m}$ \eqref{high4} and passing to the limit as $m\to\infty$, it holds in the sense of distributions that
\begin{equation*}
    Q_\infty\leq (1+\eta)P_\infty.
\end{equation*}
Due to the arbitrariness of $\eta>0$, it follows that 
\begin{equation*}
    Q_\infty\leq P_\infty.
\end{equation*}
Hence, we have $Q_\infty=P_\infty$.  As $m\rightarrow \infty$, it holds true that
\begin{align}
P_m\rightharpoonup  P_{\infty}\quad\text{and}\quad n_m^m \rightharpoonup P_{\infty} \quad& \text{weakly~~in ~~} L^2(0,T;H^1(\mathbb{R}^d)).\label{4.46}
\end{align}

\vspace{1mm}

Then, to justify $\eqref{uml}_{6-7}$, the uniform estimates in Proposition \ref{propuee} guarantee that for any $q\in (1,\infty)$, $n_m$ is uniformly bounded in $L^{\infty}(0,T;L^q(\mathbb{R}^d))$ with respect to $m$ when $m$ is suitably large. As a direct consequence, as $m\rightarrow \infty$, after extraction, there exists a limit $n_\infty$ such that
\begin{align}
n_m\rightharpoonup  n_{\infty} \quad& \text{weakly$^*$~~in ~~} L^{\infty}(0,T;L^q(\mathbb{R}^d)).\label{4.48}
\end{align}
Together with \eqref{uwe} and the Aubin-Lions-Simon lemma (Lemma \ref{lemmaAubin}), this yields
\begin{align}
n_m\rightarrow  n_{\infty} \quad& \text{strongly~~in ~~} L^{2}(0,T;\dot{H}^{-1}_{\mathrm{loc}}(\mathbb{R}^d)).\label{4.49}
\end{align}

Therefore, by \eqref{441}-\eqref{4.49} we justify all the properties in \eqref{uml}.
\end{proof}

\vspace{2mm}

 Based on the convergence properties in \eqref{uml}, we establish the Hele-Shaw system \eqref{eq1-4}-\eqref{ilr}. In previous works~\cite{HLP2023,HLP2022,CKY2018}, the complementarity relation \eqref{cr2} is a major challenge. In this paper, we observe that one can directly derive this relation from the Hele-Shaw structure by choosing the special test functions.

\begin{proof}[\textbf{\underline{Justification of \eqref{eq1-4}-\eqref{ilr} and \eqref{cr2}}}] 
By Definition \ref{ws} and the convergence results \eqref{uml}, one can obtain the Hele-Shaw system \eqref{eq1-4}
in the sense of distributions.

We now prove the Hele-Shaw graph relations \eqref{ilr} expressed by 
\begin{equation}\label{pninfty}
0\leq n_\infty\leq 1,\quad (1-n_\infty)P_\infty=0,\quad (1-n_\infty)\nabla P_\infty=0,\quad\text{a.e. in }Q_T.
\end{equation}
The first estimate of \eqref{pninfty} can be directly derived from the property $\|(n_m-1)_+\|_{L^2(Q_T)}\leq Cm^{-1}$ in Proposition \ref{propuee}.  
As $n_m^m=\frac{m-1}{m}n_m P_m$, after extraction, it follows from the weak-strong convergence properties \eqref{4.46} and \eqref{4.49} that 
\begin{equation*}
n_m^m=\frac{m-1}{m}n_m P_m\rightharpoonup n_\infty P_\infty,\quad \text{in }~~\mathcal{D}'(Q_T)\ \text{as }m\to\infty,
\end{equation*}
which, combined with \eqref{4.46}, yields the second estimate of \eqref{pninfty}. In addition, as observed in \cite{HLP2023}, for any $\alpha>0$ one has
\[
n_\infty P_\infty^\alpha=P_\infty^\alpha
\quad\text{a.e. in }Q_T,
\]
as a consequence of $(1-n_\infty)P_\infty=0$. Hence,  it holds for any $\alpha>0$ that
\begin{equation}\label{upinfty}
{n_\infty}\nabla P_\infty^{1+\alpha}=(1+\alpha)n_\infty P_\infty^\alpha  \nabla P_\infty=(1+\alpha) P_\infty^\alpha \nabla P_\infty=\nabla P_\infty^{1+\alpha}.
\end{equation}
If $0<\alpha\leq \frac{1}{3}$, we have $\nabla P_\infty^{1+\alpha}\in L_{{\rm{loc}}}^{\frac{3}{2}}(Q_T)$ due to $P_\infty\in L^2(0,T;H^1(\mathbb{R}^d))$, and further conclude that
\[\nabla P_\infty^{\alpha+1}\rightharpoonup\nabla P_\infty,\quad \text{ in }L_{{\rm{loc}}}^{\frac{3}{2}}(Q_T)\quad \text{as }\alpha\to0^+.\]
Therefore, the third estimate of \eqref{pninfty}  holds after taking the limit $\alpha\to 0^+$ for \eqref{upinfty}.   

Using $\eqref{eq1-4}_1$ and direct computations, we obtain
\begin{equation}\label{hsh-1}
    \begin{aligned}
\|\partial_tn_\infty&\|_{L^2(0,T;\dot{H}^{-1}(\mathbb{R}^d))}\\
\leq& \|\nabla P_\infty\|_{L^2(Q_T)}+\||u_\infty|n_\infty\|_{L^2(Q_T)}+\|n_\infty\chi(c_\infty)|\nabla c_\infty|\|_{L^2(Q_T)}\leq C_T.       
    \end{aligned}
\end{equation}
For any $\varphi\in \mathcal{C}_0^1(Q_T)$ with $\varphi\geq0$,  for any $h>0$ and all $d\geq2$, we have
\[\frac{n_\infty(t+h)-n_\infty(t)}{h}\varphi(t)P_\infty(t)=\frac{n_\infty(t+h)-1}{h}\varphi(t)P_\infty(t)\leq 0,\]
and
\[\frac{n_\infty(t)-n_\infty(t-h)}{h}\varphi(t)P_\infty(t)=\frac{1-n_\infty(t-h)}{h}\varphi(t)P_\infty(t)\geq 0.\]
Since $\partial_t n_\infty\in L^2(0,T;\dot{H}^{-1}(\mathbb{R}^d))$ on account of \eqref{hsh-1} and $\varphi P_\infty\in L^2(0,T;\dot{H}^1(\mathbb{R}^d))$ forms a duality pairing, we take the limit as $h\to0^+$ similarly in \cite[page 21]{noemi2023} and then obtain
\begin{equation}\label{hsh-1p}
\iint_{Q_T}\partial_t n_\infty \varphi P_\infty\, dxdt=0.
\end{equation}
Thus, by a density argument, one can take $\varphi P_\infty$ as the test function for the Hele-Shaw problem $\eqref{eq1-4}_1$ to obtain
\begin{equation*}
\begin{aligned}
    \iint_{Q_T}\partial_t n_\infty \varphi P_\infty\, dxdt-\iint_{Q_T}&n_\infty u_\infty\cdot\nabla(\varphi P_\infty) \,dxdt\\=&-\iint_{Q_T}\nabla P_\infty\cdot\nabla (\varphi P_\infty)+\iint_{Q_T}n_\infty\chi(c_\infty)\nabla c_\infty\cdot\nabla (\varphi P_\infty) \,dxdt.
\end{aligned}
\end{equation*}
Using the Hele-Shaw graph \eqref{pninfty} and the divergence-free condition $\nabla\cdot u_\infty=0$, we also have
\[
\iint_{Q_T}n_\infty u_\infty\cdot\nabla(\varphi P_\infty) \,dxdt=\iint_{Q_T} u_\infty\cdot\nabla(\varphi P_\infty) \,dxdt=-\iint_{Q_T}\nabla\cdot u_\infty \varphi P_\infty\, dxdt=0,
\]
and
\[\iint_{Q_T}n_\infty\chi(c_\infty)\nabla c_\infty\cdot\nabla (\varphi P_\infty) \,dxdt=\iint_{Q_T}\chi(c_\infty)\nabla c_\infty\cdot\nabla (\varphi P_\infty) \,dxdt.\]
We obtain
\begin{equation}\label{scr}
-\iint_{Q_T}\nabla P_\infty\cdot\nabla (\varphi P_\infty)\,dxdt+\iint_{Q_T}\chi(c_\infty)\nabla c_\infty\cdot\nabla (\varphi P_\infty) \,dxdt=0.
\end{equation}
Similarly, \eqref{hsh-1p} and \eqref{scr} hold for any $\varphi\in \mathcal{C}_0^1(Q_T)$ with $\varphi\leq0$. Consequently, \eqref{scr} follows for any $\varphi\in \mathcal{C}_0^\infty(Q_T)$. This verifies the complementarity relation \eqref{cr2} in the sense of distributions. 
\end{proof}

\begin{appendices}

\section{Another proof of the complementarity relation}\label{appendix1}
  As a comparison and to understand the nonlinear diffusion more, we show the complementarity relation \eqref{cr2} by passing to the limit in the equation $\eqref{eqm}_1$ tested by $ n_m^{m}$ as the diffusion exponent $m\to\infty$. The proof of the complementarity relation \eqref{cr2} is equivalent to proving the strong convergence of $\{\nabla n_m^m\}_{m>1}$ in $L^2(Q_T)$. In this section, we make full use of the special structure of the porous medium type equation as achieved in \cite{noemi2023,LX2021} for tumor (tissue) growth and in \cite{HLP2023,HZ2024} for chemotaxis. To this end, we need the following additional assumptions to establish some further regularity estimates:
  \begin{equation}\label{H4}\tag{${\rm{H}}_4$}
\begin{aligned}
&\|n_{m,0}\|_{L^{m+3}(\mathbb{R}^d)}^{m+3}\leq C,\quad \| |x|^2 n_{m,0}\|_{L^1(\mathbb{R}^d)}\leq C,\quad \||x|c_{m,0}\|_{L^2(\mathbb{R}^d)}\leq C,
\end{aligned}
\end{equation}
for some constant $C>0$ independent of $m$.

\begin{lemma}\label{ka} 
Let $(n_m,c_m,u_m)$ be a weak solution for the Cauchy problem \eqref{eqm} obtained in Theorem \ref{thm1} with $m\geq \max\{2d+1,9\}$, and let $\Pi_m$ be the pressure given by \eqref{pressure}, and set $P_m:=\frac{m}{m-1} n_m^{m-1}$. Then, under the assumptions \eqref{ca}, \eqref{a1}, \eqref{H3} and \eqref{H4}, for $t\in[0,T]$, we have
\begin{align}
&\|n_m(t)\|_{L^{m+3}(\mathbb{R}^d)}+\|\nabla n_m^{m+1}\|_{L^2(Q_T)}\leq C_T,\label{gradnm+1}\\
&\|n_m^{m+1}\|_{L^2(0,T;L^2(\mathbb{R}^d))}\leq C_T, \label{nm+1}\\
&\|n_m(t)|x|^2\|_{L^1(\mathbb{R}^d)}\leq C_T,\label{cwe1}\\
&\|c_m(t)|x|\|_{L^2(\mathbb{R}^d)}+\||\nabla c_m||x|\|_{L^2(Q_T)}\leq C_T.\label{cwe}
\end{align}
\end{lemma}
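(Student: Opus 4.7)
The four estimates are independent $L^p$ and weighted-moment bounds, and I would obtain them by four separate energy tests of the equations in \eqref{eqm}, absorbing each remainder into the uniform bounds already collected in Proposition \ref{propuee}.

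For \eqref{gradnm+1} I test the first equation of \eqref{eqm} against $n_m^{m+2}$. Incompressibility kills the convection, the diffusion contributes $\tfrac{m(m+2)}{(m+1)^2}\|\nabla n_m^{m+1}\|_{L^2}^2$, and after writing $n_m^{m+2}\nabla n_m=\tfrac{n_m^2}{m+1}\nabla n_m^{m+1}$ the chemotactic term becomes $\tfrac{m+2}{m+1}\int\chi(c_m) n_m^2\nabla c_m\cdot\nabla n_m^{m+1}\,dx$; Cauchy--Schwarz--Young splits it into a small multiple of $\|\nabla n_m^{m+1}\|_{L^2}^2$ (absorbed into the diffusive coercivity) plus $C\iint n_m^4|\nabla c_m|^2\,dxdt$. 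The latter I control by H\"older in space: for $d\ge 3$, pair $n_m\in L^\infty(0,T;L^{2d})$ (available because $m-1\ge 2d$) with $\nabla c_m\in L^2(0,T;L^{2d/(d-2)})$ (Sobolev from $c_m\in L^2(0,T;H^2)$ in Proposition \ref{propuee}); for $d=2$, pair $n_m\in L^\infty(0,T;L^8)$ (requiring $m\ge 9$) with $\nabla c_m\in L^2(0,T;L^4)$ via Gagliardo--Nirenberg. Integration in time, together with $\|n_{m,0}\|_{L^{m+3}}^{m+3}\le C$ from \eqref{H4}, delivers the $L^2(Q_T)$ bound on $\nabla n_m^{m+1}$ and, on taking $(m+3)$-th roots, the uniform $L^\infty(L^{m+3})$ bound on $n_m$.

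For \eqref{nm+1} I bypass Sobolev applied to \eqref{gradnm+1} and instead factor $n_m^{m+1}=\tfrac{m-1}{m}n_m^2 P_m$, so that $\|n_m^{m+1}\|_{L^2(Q_T)}^2\le\iint n_m^4 P_m^2\,dxdt$. The same H\"older pairing as above, now with $\|P_m\|_{L^2(0,T;L^{2d/(d-2)})}\le C\|\nabla P_m\|_{L^2(Q_T)}$ for $d\ge 3$ (and $\|P_m\|_{L^4}^2\le C\|P_m\|_{L^2}\|\nabla P_m\|_{L^2}$ for $d=2$), together with the uniform $L^\infty(L^{2d})$ (resp.\ $L^\infty(L^8)$) bound on $n_m$, closes the estimate. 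For \eqref{cwe1}, I multiply the $n_m$-equation by $|x|^2$ and integrate by parts to obtain
\[\frac{d}{dt}\!\int n_m|x|^2 dx = 2\!\int n_m u_m\cdot x\,dx + 2d\!\int n_m^m dx + 2\!\int\chi(c_m) n_m x\cdot\nabla c_m\,dx.\]
Each term on the right is bounded by $\delta\int n_m|x|^2\,dx$ plus a time-integrable remainder, using $\int n_m u_m^2\,dx\le C\|n_m\|_{L^{d/2}}\|\nabla u_m\|_{L^2}^2$, $\int n_m|\nabla c_m|^2\,dx\le C\|n_m\|_{L^{d/2}}\|\Delta c_m\|_{L^2}^2$ (with the $d=2$ variants via Gagliardo--Nirenberg), and $\|n_m^m\|_{L^1(Q_T)}\le C_T$ from Proposition \ref{propuee}. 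A Gr\"onwall argument starting from $\||x|^2 n_{m,0}\|_{L^1}\le C$ gives \eqref{cwe1}. Finally, for \eqref{cwe} I test the $c_m$-equation against $c_m|x|^2$; the diffusion produces the coercive $\int|\nabla c_m|^2|x|^2\,dx$ (plus a harmless $d\int c_m^2\,dx$), the transport piece reduces to $\int c_m^2 u_m\cdot x\,dx$ and is handled by Young, and the reaction is bounded by $c_B\|f\|_\infty\int n_m|x|^2\,dx\le C_T$ thanks to \eqref{cwe1}. A second Gr\"onwall from $\||x|c_{m,0}\|_{L^2}\le C$ yields both halves of \eqref{cwe}.

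\noindent\textbf{Main obstacle.} The delicate step is \eqref{nm+1}: applying Sobolev directly to \eqref{gradnm+1} gives only $L^2(0,T;L^{2d/(d-2)})$, and interpolating down to $L^2(Q_T)$ involves intermediate exponents whose dependence on $m$ is incompatible with the uniform-in-$m$ bookkeeping demanded by the Hele--Shaw limit. The factorisation $n_m^{m+1}=\tfrac{m-1}{m}n_m^2 P_m$ sidesteps this by invoking the already-established lower-order gradient bound on $P_m$, and it is precisely this step that forces the hypothesis $m\ge\max\{2d+1,9\}$.
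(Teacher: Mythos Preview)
Your proposal is correct and follows essentially the same approach as the paper: the paper likewise obtains \eqref{gradnm+1} by testing against $n_m^{m+2}$ ``following the same line of \eqref{high1}'', derives \eqref{nm+1} ``by similar arguments to \eqref{high4}'' (your factorisation $n_m^{m+1}=\tfrac{m-1}{m}n_m^2P_m$ is precisely that analogue), and handles the two weighted estimates \eqref{cwe1}--\eqref{cwe} by the same multiplier-plus-Gr\"onwall computations you describe. The only cosmetic difference is that for $d=2$ the paper controls $\|P_m\|_{L^2}$ via the Poincar\'e-type Lemma~\ref{ei} rather than Gagliardo--Nirenberg, but since that bound is already recorded in Proposition~\ref{propuee} your route is equally valid.
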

\begin{proof}
Following the same line of \eqref{high1}, one can show \eqref{gradnm+1}.  Under the condition $m\geq \max\{2d+1,9\}$, by choosing $n_m^{m+2}$ as a test function, the two estimates of \eqref{nm+1} are obtained by similar arguments to \eqref{high4}. The details are omitted.

 To show \eqref{cwe1}, we multiply $\eqref{eqm}_1$  by $|x|^2$ and integrate on $\mathbb{R}^d$, and then attain 
 \begin{equation*}
 \begin{aligned}
     \frac{1}{2}\frac{d}{dt}\int_{\mathbb{R}^d}&n_m|x|^2\,dx\\
     =&\int_{\mathbb{R}^d}n_m u_m\cdot x\,dx+d\int_{\mathbb{R}^d}n_m^m\,dx+\int_{\mathbb{R}^d}n_m\chi(c_m)\nabla c_m\cdot x\,dx\\
     \leq& \int_{\mathbb{R}^d}n_m|x|^2\,dx+C\int_{\mathbb{R}^d}n_m(|u_m|^2+|\nabla c_m|^2)dx+d\int_{\mathbb{R}^d}n_m^m\,dx\\
     \leq &\int_{\mathbb{R}^d}n_m|x|^2\,dx+d\int_{\mathbb{R}^d}n_m^m\,dx\\
     &+\begin{cases}
     C\|n_m\|_{L^{\frac{d}{2}}(\mathbb{R}^d)}(\|\nabla u_m\|_{L^2(\mathbb{R}^d)}^2+\|\nabla^2 c_m\|_{L^2(\mathbb{R}^d)}^2), &d\geq3,\\
      C\|n_m\|_{L^2(\mathbb{R}^2)}(\| u_m\|_{L^2(\mathbb{R}^2)}\|\nabla u_m\|_{L^2(\mathbb{R}^2)}+\|\nabla c_m\|_{L^2(\mathbb{R}^2)}\|\nabla^2 c_m\|_{L^2(\mathbb{R}^2)}),&d=2,
     \end{cases}
\end{aligned}
 \end{equation*}
where the Gagliardo-Nirenberg-Sobolev inequality (Lemma~\ref{t9}) has been used. Together with \eqref{basic1}, \eqref{basic4}, \eqref{basic5} and Gr\"onwall's inequality, this leads to  \eqref{cwe1}.

Finally, multiplying $\eqref{eqm}_2$ by $c_m|x|^2$ and integrating on $\mathbb{R}^d$, it holds by integrating by parts that 
\begin{equation*}
\begin{aligned}
\frac{1}{2}\frac{d}{dt}\int_{\mathbb{R}^d}&c_m^2|x|^2\,dx+\int_{\mathbb{R}^d}|\nabla c_m|^2|x|^2\,dx\\
=&d\int_{\mathbb{R}^d}c_m^2\,dx-\int_{\mathbb{R}^d}n_mf(c_m)c_m|x|^2\,dx+\int_{\mathbb{R}^d}c_m^2 u_m\cdot x\,dx\\
\leq &C+C\int_{\mathbb{R}^d}n_m |x|^2 \,dx+\frac{c_B^2}{8}\int_{\mathbb{R}^d}c_m^2|x|^2\,dx+\int_{\mathbb{R}^d}|u_m|^2\,dx\\
\leq& C_T+C\int_{\mathbb{R}^d}c_m^2|x|^2\,dx,\end{aligned}
\end{equation*}
where we used  \eqref{basic3}, \eqref{basic5} and \eqref{cwe1}. Consequently, \eqref{cwe} holds by means of the initial assumption  \eqref{H4} and Gr\"onwall's inequality, and the proof of Lemma \ref{ka} is finished.
\end{proof}

\begin{lemma}
Under the assumptions of Lemma \ref{ka}, it holds for any $p\in[2,\frac{2d}{d-2})$ and $q\in [1,\infty)$ that
\begin{alignat}{4}
n_m^{m+1} &\rightharpoonup P_\infty
&\;\;\;& \text{weakly}
&\;\;\; & \text{in }
&\;\;\; & L^2(0,T;H^1(\mathbb{R}^d)), \label{rd}\\
c_m &\to c_\infty
&\;\;\;& \text{strongly}
&\;\;\; & \text{in }
&\;\;\; & L^2(0,T;W^{1,p}(\mathbb{R}^d))
      \cap L^{\infty}(0,T;L^q(\mathbb{R}^d)). \label{cm2}
\end{alignat}
\end{lemma}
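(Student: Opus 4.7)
The plan is twofold: (i) for \eqref{rd}, extract a weak $L^2(0,T;H^1)$-limit of $\{n_m^{m+1}\}$ from the bounds of Lemma \ref{ka} and identify it with $P_\infty$ through two matching one-sided Young-type comparisons; (ii) for \eqref{cm2}, upgrade the local strong convergences \eqref{cH1} and \eqref{447} to global ones using the weighted $L^2$-bounds \eqref{cwe} on $c_m$ and $\nabla c_m$ to control the tails uniformly in $m$.

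For \eqref{rd}, the bounds \eqref{gradnm+1}--\eqref{nm+1} give $\|n_m^{m+1}\|_{L^2(0,T;H^1(\mathbb{R}^d))}\le C_T$, so up to a subsequence $n_m^{m+1}\rightharpoonup R_\infty$ weakly in $L^2(0,T;H^1(\mathbb{R}^d))$. To identify $R_\infty=P_\infty$, I would adapt the Young-type splitting used in the body of the paper for $n_m^m$. Decomposing $n_m^{m+1}=n_m^{m+1}\chi_{\{n_m\le 1+\eta\}}+n_m^{m+1}\chi_{\{n_m>1+\eta\}}$, one has $n_m^{m+1}\le(1+\eta)n_m^m$ on the first set and $n_m^{m+1}\le n_m^{2(m+1)}/(1+\eta)^{m+1}$ on the second; the uniform bound $\|n_m^{2(m+1)}\|_{L^1(Q_T)}=\|n_m^{m+1}\|_{L^2(Q_T)}^2\le C_T$ makes the high-density remainder vanish, so testing against any nonnegative $\varphi\in\mathcal{C}_0^\infty(Q_T)$ and letting first $m\to\infty$ and then $\eta\to 0^+$ yields $R_\infty\le P_\infty$. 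Conversely, writing $n_m^m=n_m^m\chi_{\{n_m\ge 1-\eta\}}+n_m^m\chi_{\{n_m<1-\eta\}}$ and using $n_m^m\le n_m^{m+1}/(1-\eta)$ on the first set together with $n_m^m\le(1-\eta)^m\to 0$ uniformly on $\mathrm{supp}\,\varphi$ on the second, one obtains $P_\infty\le R_\infty/(1-\eta)$ and hence $P_\infty\le R_\infty$.

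For \eqref{cm2}, since the local strong convergences in \eqref{cH1} and \eqref{447} are already at hand, the task reduces to uniform-in-$m$ tail control at $|x|\to\infty$. For the gradient, the weighted bound $\|\,|x|\nabla c_m\|_{L^2(Q_T)}\le C_T$ from \eqref{cwe} immediately gives $\|\nabla c_m\|_{L^2(0,T;L^2(|x|>R))}\le C_T R^{-1}$; interpolating this with the uniform Sobolev embedding $\nabla c_m\in L^2(0,T;L^{2d/(d-2)}(\mathbb{R}^d))$ (inherited from $c_m\in L^2(0,T;H^2)$) provides $\|\nabla c_m\|_{L^2(0,T;L^p(|x|>R))}\le C_T R^{-\theta(p)}$ with some $\theta(p)>0$ for every $p\in[2,\tfrac{2d}{d-2})$. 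For $c_m$ itself, the pointwise-in-time estimate $\sup_t\|c_m(t)|x|\|_{L^2(\mathbb{R}^d)}\le C_T$ gives $\sup_t\|c_m(t)\|_{L^2(|x|>R)}\le C_T R^{-1}$, which combined with $0\le c_m\le c_B$, the uniform $L^\infty(0,T;L^1)$-bound, and standard $L^p$-interpolation controls $\sup_t\|c_m(t)\|_{L^q(|x|>R)}$ for every $q\in[1,\infty)$. Combining these tail bounds with the local strong convergences completes \eqref{cm2}.

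The main obstacle is the identification $R_\infty=P_\infty$ in \eqref{rd}: the natural factorization $n_m^{m+1}=n_m\cdot n_m^m$ is a product of two sequences that converge only weakly, so a direct product passage is not available. The pointwise Young envelopes circumvent this, but their success hinges on the uniform bound $\|n_m^{m+1}\|_{L^2(Q_T)}\le C_T$ of Lemma \ref{ka}, which itself rests on the additional initial moment $\|n_{m,0}\|_{L^{m+3}(\mathbb{R}^d)}^{m+3}\le C$ from \eqref{H4}. By contrast, the tail arguments for \eqref{cm2} are comparatively routine once \eqref{cwe1}--\eqref{cwe} are in place.
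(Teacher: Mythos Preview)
Your proposal is correct and follows essentially the same route as the paper. For \eqref{rd} the paper simply refers back to the Young-type two-sided comparison already used for $n_m^m$ (your splitting argument is a faithful reproduction of that), and for \eqref{cm2} the paper likewise combines the weighted bounds \eqref{cwe} with the local strong convergences \eqref{cH1}, \eqref{447} to kill the tails; the only cosmetic difference is that the paper first obtains global $L^2(0,T;H^1)$ convergence and then upgrades to $W^{1,p}$ via Gagliardo--Nirenberg with the uniform $H^2$ bound, whereas you interpolate the $L^p$ tail directly---both are equivalent. One small point you leave implicit (and the paper makes explicit via Fatou) is that the weighted bounds pass to $c_\infty$, which is needed to control $\|\nabla c_\infty\|$ and $\|c_\infty\|$ on $\{|x|>R\}$.
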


\begin{proof}
\eqref{rd}  can be directly proved by \eqref{gradnm+1}-\eqref{nm+1} and a similar argument as in \eqref{4.46}-\eqref{4.48}. Due to \eqref{cwe} and Fatou's property, it holds that
\begin{align*}
\sup\limits_{0\leq t\leq T}\|c_\infty(t)|x|\|_{L^2(\mathbb{R}^d)}+\||\nabla c_\infty||x|\|_{L^2(Q_T)}\leq C_T,
\end{align*}
from which and \eqref{cwe} we infer
\begin{align*}
\iint_{Q_T} (|c_m-c_\infty|^2+|\nabla(c_m-c_\infty)|^2) \,dxdt&\leq \int_0^T \hskip-4pt\int_{B_R} (|c_m-c_\infty|^2+|\nabla(c_m-c_\infty)|^2) \,dxdt+\frac{C_T}{R^2},
\end{align*}
for any $R>0$. Thus, according to \eqref{cH1}, we justify the strong convergence of $c_m$ in $L^2(0,T;H^1(\mathbb{R}^d))$ by first taking the limit as $m\rightarrow \infty$ and then letting $R\rightarrow \infty$. Then, using the Gagliardo-Nirenberg-Sobolev inequality (Lemma~\ref{t9}) and Proposition \ref{propuee}, we further infer for any $p\in (2,\frac{2d}{d-2})$ that
\begin{align*}
\|\nabla(c_m-c_\infty)\|_{L^2(0,T;L^p(\mathbb{R}^d))}\leq C\|\nabla(c_m-c_\infty)\|_{L^2(Q_T)}^{\theta} \|(\nabla^2 c_m, \nabla^2c_\infty)\|_{L^2(Q_T)}^{1-\theta}\rightarrow 0\quad\text{as}\quad m\rightarrow \infty,
\end{align*}
where $\theta\in(0,1)$ is given by $\frac{1}{p}=\frac{1}{2}\theta+(\frac{1}{2}-\frac{1}{d})(1-\theta)$. Similarly, using \eqref{447} and \eqref{cwe}, one has the strong convergence of $c_m$ in $\mathcal{C}([0,T];L^1(\mathbb{R}^d))$. Together with the upper bound of $c_m$ and $L^q$ interpolation, we eventually arrive at \eqref{cm2}.
\end{proof}

We prove the key convergence property of $\nabla n_m^m$ in $L^2(Q_T)$.

 \begin{prop}\label{sg}
    Under the assumptions of Lemma~\ref{ka} with the conditions \eqref{ca}, after the extraction of a subsequence, it holds true that
    \begin{align}
        \nabla n_m^m\to \nabla P_\infty\   \text{ strongly in }L^2(Q_T)\quad \text{ as }m\to\infty.\label{nablan:c}
    \end{align}
\end{prop}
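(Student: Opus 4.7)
The plan is to show $\iint_{Q_T}|\nabla n_m^m - \nabla P_\infty|^2\,dxdt \to 0$ as $m\to\infty$. Expanding the square and invoking the weak convergence $\nabla n_m^m \rightharpoonup \nabla P_\infty$ in $L^2(Q_T)$ from Theorem~\ref{thm2}, the cross term $\iint_{Q_T}\nabla n_m^m\cdot\nabla P_\infty\,dxdt$ converges to $\iint_{Q_T}|\nabla P_\infty|^2\,dxdt$, so the problem reduces to establishing the reverse norm inequality $\limsup_{m\to\infty}\iint_{Q_T}|\nabla n_m^m|^2\,dxdt \leq \iint_{Q_T}|\nabla P_\infty|^2\,dxdt$; together with the weak lower semicontinuity $\liminf_{m\to\infty}\iint_{Q_T}|\nabla n_m^m|^2\,dxdt \geq \iint_{Q_T}|\nabla P_\infty|^2\,dxdt$, this forces norm convergence and hence the desired strong convergence.

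The upper bound comes from an energy identity obtained by multiplying $\eqref{eqm}_1$ by $n_m^m$, integrating over $Q_T$, using $\nabla\cdot u_m=0$ to kill the transport term, and rewriting the chemotactic contribution via $n_m\nabla n_m^m = \frac{m}{m+1}\nabla n_m^{m+1}$:
\[
\iint_{Q_T}|\nabla n_m^m|^2\,dxdt + \frac{1}{m+1}\int_{\mathbb{R}^d}n_m^{m+1}(T,x)\,dx = \frac{1}{m+1}\int_{\mathbb{R}^d}n_{m,0}^{m+1}\,dx + \frac{m}{m+1}\iint_{Q_T}\chi(c_m)\nabla c_m\cdot\nabla n_m^{m+1}\,dxdt.
\]
As in Section~\ref{section2} this is first derived at the approximate $\varepsilon$-level and then passed to $\varepsilon\to 0$. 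The terminal term on the left is non-negative and may simply be discarded. The initial term vanishes in the limit: splitting the integration into $\{n_{m,0}\leq 1\}$ and $\{n_{m,0}>1\}$ and using $\|n_{m,0}\|_{L^1}\leq C$ together with $\|n_{m,0}\|_{L^{m+3}}^{m+3}\leq C$ from~\eqref{H4} yields $\frac{1}{m+1}\int n_{m,0}^{m+1}\,dx \leq \frac{2C}{m+1}\to 0$. Finally, the weak convergence $\nabla n_m^{m+1}\rightharpoonup \nabla P_\infty$ in $L^2(Q_T)$ from~\eqref{rd}, paired with the strong convergence $\chi(c_m)\nabla c_m\to \chi(c_\infty)\nabla c_\infty$ in $L^2(Q_T)$ (a consequence of~\eqref{cm2} and the Lipschitz continuity of $\chi$), delivers
\[
\limsup_{m\to\infty}\iint_{Q_T}|\nabla n_m^m|^2\,dxdt \leq \iint_{Q_T}\chi(c_\infty)\nabla c_\infty\cdot\nabla P_\infty\,dxdt.
\]

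To close the argument I would identify the right-hand side with $\iint_{Q_T}|\nabla P_\infty|^2\,dxdt$. This is precisely the integrated form of the complementarity relation~\eqref{cr2}, which was established in Section~\ref{sec:hs} by testing the limit equation $\eqref{eq1-4}_1$ against $\varphi P_\infty$ and exploiting the Hele-Shaw graph $(1-n_\infty)\nabla P_\infty=0$ from~\eqref{ilr}. Taking $\varphi=\varphi_R$ to be a smooth spatial cutoff with $\varphi_R\equiv 1$ on $B_R$ and $|\nabla\varphi_R|\lesssim R^{-1}$, the remainder terms containing $\nabla\varphi_R$ vanish as $R\to\infty$ because mass conservation forces $\{n_\infty=1\}$ to be bounded and $P_\infty$ is supported there. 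This yields $\iint_{Q_T}\chi(c_\infty)\nabla c_\infty\cdot\nabla P_\infty\,dxdt = \iint_{Q_T}|\nabla P_\infty|^2\,dxdt$, closing the chain of inequalities.

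The principal obstacle is the terminal boundary term $\frac{1}{m+1}\int_{\mathbb{R}^d}n_m^{m+1}(T,x)\,dx$: the uniform bound $\frac{1}{m+1}\|n_m(t)\|_{L^{m+1}}^{m+1}\leq C_T$ does not determine its precise limit, but its non-negativity is exactly what the $\limsup$ step requires. A secondary subtlety is justifying the integration by parts in time at the approximate $\varepsilon$-level and passing $\varepsilon\to 0$ while preserving the uniform-in-$m$ estimates; this follows the machinery already developed in Section~\ref{section2}.
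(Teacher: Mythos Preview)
Your approach is correct as a proof of the stated proposition, but it takes a genuinely different route from the paper and, more importantly, it undermines the purpose of Appendix~\ref{appendix1}. The paper does \emph{not} test $\eqref{eqm}_1$ alone; it tests the \emph{difference equation} $\partial_t(n_m-n_\infty)+\ldots$ against $n_m^m$, producing directly an upper bound for $\iint_{Q_T}|\nabla(n_m^m-P_\infty)|^2$. Each term on the right-hand side is then shown to vanish: the time-derivative piece $\iint \partial_t n_\infty\, n_m^m$ tends to $\iint\partial_t n_\infty P_\infty=0$ via the one-sided difference-quotient argument of \eqref{hsh-1p}; the cross term $\iint\nabla(n_m^m-P_\infty)\cdot\nabla P_\infty$ vanishes by weak convergence; the chemotactic and transport pieces are killed using $n_\infty\nabla P_\infty=\nabla P_\infty$ and $\nabla\cdot u_\infty=0$. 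Crucially, this argument never invokes the complementarity relation \eqref{cr2}. Your proof, by contrast, \emph{feeds} \eqref{cr2} back in at the final step to identify $\iint\chi(c_\infty)\nabla c_\infty\cdot\nabla P_\infty$ with $\iint|\nabla P_\infty|^2$. Since the entire point of Appendix~\ref{appendix1} is to give an \emph{alternative} derivation of \eqref{cr2} \emph{via} Proposition~\ref{sg}, your argument renders the appendix circular: you would be proving Proposition~\ref{sg} from \eqref{cr2} in order to reprove \eqref{cr2}.

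One smaller issue: your claim that ``mass conservation forces $\{n_\infty=1\}$ to be bounded'' is not correct---finite measure does not imply boundedness. The removal of the spatial cutoff $\varphi_R$ in \eqref{scr} should instead be argued from the global integrability $P_\infty\nabla P_\infty,\ P_\infty\chi(c_\infty)\nabla c_\infty\in L^1(Q_T)$ (Cauchy--Schwarz with $P_\infty,\nabla P_\infty,\nabla c_\infty\in L^2(Q_T)$) together with $|\nabla\varphi_R|\lesssim R^{-1}$, so that the remainder terms are $O(R^{-1})$. A time cutoff $\psi_\delta$ is also needed since \eqref{scr} is stated for $\varphi\in\mathcal{C}_0^1(Q_T)$.
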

\begin{proof}

We have the difference equation 
\begin{equation}\label{de}
\begin{aligned}
\partial_t(n_m-n_\infty)&+(u_m\cdot\nabla n_m-u_\infty\cdot\nabla n_\infty)\\
=&\Delta (n_m^m-P_\infty)-\nabla\cdot\big(n_m\chi(c_m)\nabla c_m-n_\infty\chi(c_\infty)\nabla c_\infty\big).
\end{aligned}
\end{equation}
 Let $n_m^m$ be the test function for the equation \eqref{de}. Then it follows that
 \begin{equation}\label{ide}
     \begin{aligned}
\iint_{Q_T}&|\nabla(n_m^m-P_\infty)|^2  dxdt\\
\leq& \iint_{Q_T}\partial_t n_\infty n_m^m  dxdt+\frac{1}{m+1}\int_{\mathbb{R}^d}n_{m,0}^{m+1}\,dx\\
&+\iint_{Q_T}\nabla(n_m^m-P_\infty)\cdot\nabla P_\infty\, dxdt\\
&+\iint_{Q_T}\nabla n_m^m\cdot \big( n_m\chi(c_m)\nabla c_m-n_\infty\chi(c_\infty)\nabla c_\infty\big)\,dxdt\\
&+\iint_{Q_T}(u_m n_m-u_\infty n_\infty)\cdot\nabla n_m^m  dxdt.
     \end{aligned}
 \end{equation}
The first term on the right-hand side of \eqref{ide} vanishes as $m\rightarrow\infty$. Indeed, for any $h>0$ and all $d\geq2$, we have
\[\frac{n_\infty(t+h)-n_\infty(t)}{h}P_\infty(t)=\frac{n_\infty(t+h)-1}{h}P_\infty(t)\leq 0,\]
\[\frac{n_\infty(t)-n_\infty(t-h)}{h}P_\infty(t)=\frac{1-n_\infty(t-h)}{h}P_\infty(t)\geq 0.\]
On account of the facts that $\partial_t n_\infty\in L^2(0,T;\dot{H}^{-1}(\mathbb{R}^d))$ are obtained by \eqref{hsh-1} and $P_\infty\in L^2(0,T;\dot{H}^1(\mathbb{R}^d))$ from \eqref{rd}, one takes the limit as $h\to0^+$ in the sense of duality (see \cite[Page 21]{noemi2023}) and then obtains 
$$\iint_{Q_T}\partial_t n_\infty P_\infty\, dxdt=0.
$$
Consequently, it further holds by $\|n_m^m\|_{L^2(0,T;\dot{H}^1(\mathbb{R}^d))}\leq C_T$ for $d\geq 2$ and the duality relation that 
\begin{equation}\label{k1}\iint_{Q_T}\partial_t n_\infty n_m^m  \,dxdt\to \iint_{Q_T}\partial_t n_\infty P_\infty\, dxdt=0\quad\text{ as }m\to\infty. 
\end{equation}
Concerning the second term,  by means of \eqref{4.46}, we have
\begin{equation}\label{k2}\iint_{Q_T}\nabla(n_m^m-P_\infty)\cdot\nabla P_\infty\, dxdt\to\iint_{Q_T}\nabla(P_\infty-P_\infty)\cdot\nabla P_\infty\, dxdt=0\quad \text{ as }m\to\infty.
\end{equation}
Recalling  $\nabla\cdot u_m=0$, $\nabla\cdot u_\infty=0$, the Hele-Shaw graph relations \eqref{pninfty} and \eqref{4.46}, we obtain
\begin{equation}\label{k3}
\begin{aligned}
&\iint_{Q_T}(u_m n_m-u_\infty n_\infty)\cdot\nabla n_m^m\,dxdt\\
&\quad=\frac{m}{m+1}\iint_{Q_T} u_m\cdot\nabla n_m^{m+1}dxdt-\iint_{Q_T} n_\infty u_\infty\cdot\nabla n_m^mdxdt\\
&\quad=-\iint_{Q_T}n_\infty u_\infty\cdot\nabla n_m^m dxdt\\
&\quad\rightarrow -\iint_{Q_T}n_\infty u_\infty\cdot\nabla P_\infty dxdt=-\iint_{Q_T} u_\infty\cdot\nabla P_\infty dxdt=0\quad\text{as }m\to\infty.
\end{aligned}
\end{equation}
In view of the strong convergence \eqref{cm2} and the structural conditions \eqref{ca}, it follows that
\begin{equation}\label{sc}
\chi(c_m)\nabla c_m \to \chi(c_\infty)\nabla c_{\infty}\quad \text{in }L^2(Q_T)\quad \text{ as }m\to\infty.   \end{equation}
Furthermore, using \eqref{rd},  \eqref{sc} and \eqref{4.46},  it holds by the weak-strong convergence that 
\begin{equation}\label{k4}
\begin{aligned}\iint_{Q_T}&\nabla n_m^m\cdot\big( n_m\chi(c_m)\nabla c_m-n_\infty\chi(c_\infty)\nabla c_\infty\big)\, dxdt\\
&=\iint_{Q_T}\Big(\frac{m}{m+1}\chi(c_m)\nabla c_m\cdot\nabla n_m^{m+1}-n_\infty\chi(c_\infty)\nabla c_\infty\cdot\nabla n_m^m \big) \,dxdt\\
&\to \iint_{Q_T}\chi(c_\infty)\nabla c_\infty\cdot\nabla P_\infty-n_\infty\chi(c_\infty)\nabla c_\infty\cdot\nabla P_\infty\, dxdt\\
&=0\quad \text{ as  }m\to\infty,
\end{aligned}\end{equation} 
where we used $n_\infty\nabla P_\infty=\nabla P_\infty$. Moreover, by the assumption \eqref{H3}, we have
\[
\frac{1}{m+1}\int_{\mathbb{R}^d}n_{m,0}^{m+1}\,dx\to0
\quad\text{as }m\to\infty.
\]
Substituting \eqref{k1}, \eqref{k2}, \eqref{k3}, \eqref{k4} and the above
initial estimate into \eqref{ide}, we obtain \eqref{nablan:c}.
\end{proof}

\vspace{1mm}

\noindent\underline{\textbf{\emph{Proof of the complementarity relation.}}}\ \     Let $n_m^m\varphi$ be a test function with any $\varphi\in \mathcal{C}_0^\infty(Q_T)$ for $\eqref{eqm}_1$, then we have  
\begin{equation*}
\begin{aligned}-\iint_{Q_T}&\frac{n_m^{m+1}}{m+1}\partial_t\varphi  \,dxdt+\iint_{Q_T}\big(|\nabla n_m^m|^2\varphi+n_m^m\nabla n_m^m\cdot\nabla \varphi\big) \,dxdt\\
&=\frac{1}{m+1}\iint_{Q_T}n_m^{m+1}u_m\cdot\nabla\varphi  \,dxdt\\
&\quad +\iint_{Q_T}\Big(\frac{m}{m+1}\chi(c_m)\nabla c_m\cdot\nabla n_m^{m+1}\varphi+n_m^{m+1}\chi(c_m)\nabla c_m\cdot\nabla \varphi\Big) \,dxdt.
\end{aligned}
\end{equation*}
By means of the convergence properties \eqref{rd},\eqref{nablan:c}, \eqref{sc} and \eqref{4.46} as well as the regularity estimates \eqref{basic5} and \eqref{nm+1}, after passing to the limit as $m\rightarrow\infty$,  one deduces that
\begin{equation*}
\begin{aligned}&\iint_{Q_T}\big(|\nabla P_\infty|^2\varphi+P_\infty\nabla P_\infty\cdot\nabla \varphi\big) \,dxdt-\iint_{Q_T}\big(\chi(c_\infty)\nabla c_\infty\cdot\nabla P_\infty\varphi+P_\infty\chi(c_\infty)\nabla c_\infty\cdot\nabla \varphi\big) \,dxdt=0.
\end{aligned}
\end{equation*}
Hence, the complementarity relation \eqref{cr2} holds in the sense of distributions. \hfill $\Box$

\section{Proof of Proposition \ref{propapp}}\label{App:proofapp}

For any $\eta\in(0,1)$, we consider the following regularized problem
\begin{equation}\label{app1}
\left\{
\begin{aligned}
&\partial_t n_\eta+(J_{\eta}\ast u_\eta)\cdot \nabla n_\eta=m\nabla\cdot \Big(\big( n_\eta^{m-1}\ast J_{\eta}\big)\nabla n_\eta\Big)+\varepsilon  \Delta n_\eta -\nabla \cdot \big(n_\eta\chi(c_\eta) \nabla( J_{\varepsilon}\ast c_\eta)\big),\\
&\partial_t c_\eta+ (J_{\eta}\ast u_\eta) \cdot \nabla c_\eta=\Delta c_\eta-n_\eta f(c_\eta),\\
&\partial_t u_\eta +(J_{\eta}\ast u_\eta) \cdot \nabla  u_\eta+\nabla \Pi_\eta
=\Delta u_\eta-n_\eta\nabla (J_{\eta}\ast \phi),\\
&\nabla \cdot u_{\eta}=0,\\
&(n_{\eta},c_{\eta},u_{\eta})(x,0)=(n_{0,\varepsilon},c_{0,\varepsilon}, u_{0,\varepsilon})(x).
\end{aligned}
\right.
\end{equation}
where the initial data $(n_{0,\varepsilon},c_{0,\varepsilon}, u_{0,\varepsilon})$ with $0<\varepsilon<1$ is given by \eqref{appd}, and $J_{\eta}$ and $J_{\varepsilon}$ denote standard mollifiers.

For fixed $0<\varepsilon,\eta <1$, there exists a time $T_\eta$ such that the regularized problem \eqref{app1} has a unique strong solution $(n_\eta,c_\eta,u_\eta)\in C([0,T_{\eta});H^{s_*}(\mathbb{R}^d))$ with $s_*\geq [\frac{d}{2}]+1$. Since the proof of local existence follows a quite standard way, we omit the details for brevity; cf.~\cite{ckl2014,heshouwu}. Due to the property of the mollifier on every nonlinear term in \eqref{app1}, one can prove the uniform-in-time a priori estimates and extend the local solution globally in time. Thus, we obtain a global approximate sequence $\{(n_{\eta},c_{\eta},u_{\eta})\}_{0<\eta<1}$.


Next, we establish the uniform-in-$\eta$ estimates of the global strong solution $(n_\eta,c_\eta,u_\eta)$ for any $0<\eta<1$. First, it is clear that $0\leq c_\eta \leq c_B$ and $\|c_\eta\|_{L^1(\mathbb{R}^d)}\leq \|c_{0,\varepsilon}\|_{L^1}$. Young's inequality for convolutions yields $\|\nabla (J_\varepsilon\ast c_\eta)\|_{L^2(\mathbb{R}^d)}\leq C_\varepsilon  \|c_{0,\varepsilon}\|_{L^1(\mathbb{R}^d)}$.  Via the standard $L^2$ estimate for the parabolic equations, it holds by the Cauchy-Schwarz inequality that
\begin{equation*}
\begin{aligned}
&\frac{1}{2}\frac{d}{dt}\| (n_{\eta},c_{\eta},u_{\eta})\|_{L^2(\mathbb{R}^d)}^2+\int_{\mathbb{R}^d}\Big( \big(\var+m  n_\eta^{m-1}\ast J_{\eta}  \big) |\nabla n_\eta|^2+|\nabla c_{\eta}|^2+|\nabla u_\eta|^2+n_\eta f(c_\eta) c_\eta\Big)\,dx\\
&=\int_{\mathbb{R}^d}\Big(\chi(c_\eta) n_{\eta} \nabla(J_\varepsilon  \ast c_\eta) \cdot \nabla n_\eta-n_\eta u_\eta\cdot \nabla( J_\eta\ast \phi)\Big)\,dx\\
&\leq \frac{\varepsilon}{4} \| \nabla n_\eta\|_{L^2(\mathbb{R}^d)}^2+C_\varepsilon\Big(\sup_{0\leq s\leq c_B}\chi(s)^2\|c_{0,\varepsilon}\|_{L^1(\mathbb{R}^d)}^2+\|\nabla\phi\|_{L^{\infty}}^2\Big)\|n_{\eta}\|_{L^2(\mathbb{R}^d)}^2+C\|u_\eta\|_{L^2(\mathbb{R}^d)}^2.
\end{aligned}
\end{equation*}
Then Grönwall's inequality ensures that
\begin{equation}\label{etaL2}
\begin{aligned}
\sup_{t\in[0,T]}\| (n_{\eta},c_{\eta},u_{\eta})\|_{L^2(\mathbb{R}^d)}^2+\int_0^T \|(\nabla n_{\eta},\nabla c_{\eta},\nabla u_{\eta})\|_{L^2(\mathbb{R}^d)}^2\,dt\leq C_{\varepsilon,T}.
\end{aligned}
\end{equation}
Moreover, from $\eqref{app1}_2$, one arrives at
\begin{equation}
\begin{aligned}
\sup_{t\in[0,T]}\| \nabla c_{\eta} \|_{L^2(\mathbb{R}^d)}^2+\int_0^T \|\nabla^2 c_{\eta}\|_{L^2(\mathbb{R}^d)}^2\,dt\leq C_{\varepsilon,T}.\label{etacH1}
\end{aligned}
\end{equation}
Proving this inequality for $c$ is totally similar to that of \eqref{nablacL2var}. Next, multiplying $\eqref{app1}_1$ with $q n_\eta^{q-1}$ for $m-1<q<\infty$, integrating the resulting equation over $\mathbb{R}^d$ and using $\nabla\cdot u_\eta=0$, we have
\begin{equation}\nonumber
\begin{aligned}
\frac{d}{dt}&\|n_\eta\|_{L^q(\mathbb{R}^d)}^q+\varepsilon  q(q-1)\int_{\mathbb{R}^d} n_\eta^{q-2}|\nabla n_\eta |^2 \,dx+m q(q-1) \int_{\mathbb{R}^d} \big( n_\eta^{m-1}\ast J_{\eta}\big) n_\eta^{q-2} |\nabla n_\eta |^2 \,dx\\
&=q(q-1)\int_{\mathbb{R}^d} \chi(c_\eta) n_\eta^{q-1} \nabla(J_\varepsilon\ast c_\eta ) \cdot \nabla n_\eta \,dx\\
&\leq \frac{1}{2}\varepsilon  q(q-1)\int_{\mathbb{R}^d} n_\eta^{q-2}|\nabla n_\eta |^2 \,dx+C_\varepsilon  q(q-1) \|n_\eta\|_{L^q(\mathbb{R}^d)}^q.
\end{aligned}
\end{equation}
Here we have used $\|\nabla(J_\varepsilon\ast c_\eta)\|_{L^{\infty}(\mathbb{R}^d)}\leq C_\varepsilon  \|c_\eta\|_{L^{\infty}(\mathbb{R}^d)}\leq C_{\varepsilon} $. Therefore, combining with $\|n_\eta\|_{L^1}=\|n_{0,\varepsilon}\|_{L^1}$, we deduce \begin{align}
\sup\limits_{t\in[0,T]}\|n_\eta\|_{L^q(\mathbb{R}^d)}\leq  C_{\varepsilon,T},\quad 1\leq q<\infty.\label{nLp}
\end{align}

With the aid of \eqref{etaL2}, \eqref{etacH1} and \eqref{nLp}, a limit $(n_\varepsilon,c_\varepsilon,u_\varepsilon)$ exists such that as $\eta\rightarrow 0$, for any time $T>0$, it holds up to subsequences that
\begin{equation}\label{weak:app}
\left\{
\begin{alignedat}{4}
n_\eta \rightharpoonup n_\varepsilon  \quad
&\text{weakly}^\ast \quad &\text{in}\quad &L^{\infty}(0,T;L^{p}(\mathbb{R}^d)),\qquad &1\le p<\infty,\\
n_\eta \rightharpoonup n_\varepsilon  \quad
&\text{weakly}       \quad &\text{in}\quad &L^2(0,T;H^1(\mathbb{R}^d)),\\
c_\eta \rightharpoonup c_\varepsilon  \quad
&\text{weakly}^\ast \quad &\text{in}\quad &L^{\infty}(0,T;L^{\infty}(\mathbb{R}^d)\cap H^1(\mathbb{R}^d)),\\
c_\eta \rightharpoonup c_\varepsilon  \quad
&\text{weakly}       \quad &\text{in}\quad &L^2(0,T;H^2(\mathbb{R}^d)),\\
u_\eta \rightharpoonup u_\varepsilon  \quad
&\text{weakly}^\ast \quad &\text{in}\quad &L^{\infty}(0,T;L^2(\mathbb{R}^d)),\\
u_\eta \rightharpoonup u_\varepsilon  \quad
&\text{weakly}       \quad &\text{in}\quad &L^2(0,T;H^1(\mathbb{R}^d)).
\end{alignedat}
\right.
\end{equation}
To justify the strong convergence, one needs to estimate the time derivatives. Arguing similarly as for proving \eqref{cutvar}, for $s_0>\frac{d}{2}$, we can obtain
\begin{equation*}
\begin{aligned}
&\|\partial_t c_\eta\|_{L^2(0,T;H^{-1}(\mathbb{R}^d))}+\|\partial_t u_\eta\|_{L^{2}(0,T;H^{-s_0-1}(\mathbb{R}^d))}\leq C_{\varepsilon,T}.
\end{aligned}
\end{equation*}
For any $\varphi\in L^{2}(0,T;H^{s_0+2}(\mathbb{R}^d))$, one has
\begin{equation*}
\begin{aligned}
\left|\int_0^T\hskip-4pt\int_{\mathbb{R}^d} \partial_t n_\eta  \varphi  \,dxdt\right|
\leq& \|n_\eta^{m-1}\|_{L^{\infty}(0,T;L^{2}(\mathbb{R}^d))}
\|\nabla n_\eta\|_{L^2(0,T;L^2(\mathbb{R}^d))}
\|\Delta \varphi\|_{L^{2}(0,T;L^{\infty}(\mathbb{R}^d))}\\
&+\varepsilon \|n_\eta\|_{L^{\infty}(0,T;L^2(\mathbb{R}^d))}
\|\Delta \varphi\|_{L^{1}(0,T;L^{2}(\mathbb{R}^d))}\\
&+\|u_\eta\|_{L^{\infty}(0,T;L^2(\mathbb{R}^d))}
\|n_\eta\|_{L^{\infty}(0,T;L^2(\mathbb{R}^d))}
\|\nabla\varphi\|_{L^1(0,T;L^{\infty}(\mathbb{R}^d))}\\
&+\sup_{0\leq s\leq c_B}|\chi(s)|
\|n_\eta\|_{L^{\infty}(0,T;L^2(\mathbb{R}^d))}
\|\nabla c_\eta\|_{L^{\infty}(0,T;L^2(\mathbb{R}^d))}
\|\nabla\varphi\|_{L^1(0,T;L^{\infty}(\mathbb{R}^d))}\\
\leq& C_{\varepsilon,T}\|\varphi\|_{L^{2}(0,T;H^{s_0+2}(\mathbb{R}^d))}.
\end{aligned}
\end{equation*}
which implies 
\[ \|\partial_t n_\eta\|_{L^2(0,T;H^{-2-s_0}(\mathbb{R}^d))}\leq C_{\varepsilon,T}. \]
Therefore, up to a subsequence,  as $\eta\rightarrow 0$, we use  the Aubin-Lions-Simon lemma (Lemma~\ref{lemmaAubin}) and obtain 
\begin{equation}\label{strongap}
\left\{
\begin{alignedat}{3}
n_\eta \to n_\varepsilon  \quad
&\text{strongly}\quad &\text{in}\quad &L^2([0,T];L_{\mathrm{loc}}^{2}(\mathbb{R}^d)),\\
c_\eta \to c_\varepsilon  \quad
&\text{strongly}\quad &\text{in}\quad &L^2([0,T];H_{\mathrm{loc}}^{1}(\mathbb{R}^d)),\\
u_\eta \to u_\varepsilon  \quad
&\text{strongly}\quad &\text{in}\quad &L^2([0,T];L_{\mathrm{loc}}^{2}(\mathbb{R}^d)).
\end{alignedat}
\right.
\end{equation}

Note that the strong convergence property $\eqref{strongap}_3$ implies $J_{\eta}\ast u_\eta-u_\varepsilon=J_{\eta}\ast u_\eta-J_{\eta}\ast u_\varepsilon+J_{\eta}\ast u_\varepsilon-u_\varepsilon\rightarrow 0 $ in $L^2(0,T;L^2_{{\rm{loc}}}(\mathbb{R}^d))$ as $\eta\to0$. Consequently, together with \eqref{weak:app}, as $\eta\to0$, it holds by the weak-strong convergence  that 
\begin{equation*}
\begin{alignedat}{2}
&(J_{\eta}\ast u_\eta)\cdot \nabla n_\eta \rightharpoonup u_\varepsilon\cdot \nabla n_\varepsilon
\quad &\text{in}\quad &\mathcal{D}'(Q_T),\\
&(J_{\eta}\ast u_\eta)\cdot \nabla c_\eta \rightharpoonup u_\varepsilon\cdot \nabla c_\varepsilon
\quad &\text{in}\quad &\mathcal{D}'(Q_T),\\
&(J_{\eta}\ast u_\eta)\cdot \nabla u_\eta \rightharpoonup u_\varepsilon\cdot \nabla u_\varepsilon
\quad &\text{in}\quad &\mathcal{D}'(Q_T).
\end{alignedat}
\end{equation*}
Similarly, from \eqref{weak:app}, \eqref{strongap} and the upper bound of $c_\eta$, let $\eta\to0$, the weak-strong convergence yields
\begin{equation*}
\begin{aligned}
\chi(c_\eta)n_\eta \nabla (J_\varepsilon\ast c_\eta)
&\rightharpoonup \chi(c_\varepsilon)n_\varepsilon  \nabla (J_\varepsilon\ast c_\varepsilon)
&\quad \text{in}\quad \mathcal{D}'(Q_T),\\
n_\eta f(c_\eta)
&\rightharpoonup n_\varepsilon  f(c_\varepsilon)
&\quad \text{in}\quad \mathcal{D}'(Q_T).
\end{aligned}
\end{equation*}
After extracting a subsequence, since $n_\eta$ converges to $n_\varepsilon$ a.e. in $(0,T)\times K$ for any compact subset $K$ of $\mathbb{R}^d$, Egorov's theorem indicates that for any given $\delta>0$, there exists a subset $Q_\delta\subset (0,T)\times K$ such that it satisfies $|((0,T)\times K)\setminus Q_\delta|<\delta$ and $n_\eta$ converges to $n_\varepsilon$ uniformly on $Q_\delta$ as $\eta\rightarrow0$. Recalling that $n_\eta$ is uniformly bounded in $L^{\infty}(0,T;L^q(\mathbb{R}^d))$ for any $1\leq q<\infty$, as $\eta\rightarrow0$, we have
\begin{align*}
&\|n_\eta-n_\varepsilon\|_{L^{2(m-1)}((0,T)\times K)}\\
&\quad\leq \|n_\eta-n_\varepsilon\|_{L^{2(m-1)}(Q_\delta)}+\|(n_\eta,n_\varepsilon)\|_{L^{\infty}(0,T;L^{2m})} T^{\frac{1}{2m}} |((0,T)\times K)\setminus Q_\delta|^{\frac{1}{2m(m-1)}}\\
&\quad\leq \|n_\eta-n_\varepsilon\|_{L^{2(m-1)}(Q_\delta)}+C_{T,\varepsilon} \delta^{\frac{1}{2m(m-1)}}\\
&\quad\rightarrow C_{T,\varepsilon}\delta^{\frac{1}{2m(m-1)}}.
\end{align*}
As $\delta$ can be arbitrary, this in particular implies that as $\eta\to0$, $n_\eta^{m-1}$ converges to $n_\varepsilon^{m-1}$ strongly in $L^2(0,T;L^2_{\rm{loc}}(\mathbb{R}^d))$. At the moment, $J_{\eta}\ast n_\eta^{m-1}$ converges to $n_\varepsilon^{m-1}$ strongly in $L^2(0,T;L^2_{\rm{loc}}(\mathbb{R}^d))$ as $\eta\to0$. Consequently, using the weak-strong convergence, one has
\begin{align*}
\big( n_\eta^{m-1}\ast J_{\eta}\big)\nabla n_\eta\rightharpoonup n_\varepsilon^{m-1} \nabla n_\varepsilon\quad\text{in}\quad \mathcal{D}'(Q_T)\text{ as }\eta\to0.
 \end{align*}
The above convergence properties imply that $(n_\varepsilon,c_\varepsilon,u_\varepsilon)$ is indeed a global weak solution to the problem \eqref{app}-\eqref{appd} which obeys \eqref{r100}. The proof of Proposition \ref{propapp} is finished.

\section{Preliminary lemmas}\label{appendix2}
\begin{lemma}[cf.~\cite{gn1983}]\label{ei}Let $\Omega$ be a bounded domain in $\mathbb{R}^d$ with Lipschitz boundary and $p^*:=\frac{dp}{d-p}$ with $1\leq p<d$. Then there is a positive constant $c$, depending on $d$, such that 
\[\|u-u_S\|_{L^{p^*}(\Omega)}\leq \frac{c D^{d+1-\frac{d}{p}}}{|S|^{\frac{1}{p}}}\|\nabla u\|_{L^p(\Omega)},\quad \forall\ u\in W^{1,p}(\Omega),\]
where $S$ is any measurable subset of $\Omega$ with $|S|>0$, $u_S=\frac{1}{|S|}\int_{S}udx$, and $D$ is the diameter of $\Omega$.
\end{lemma}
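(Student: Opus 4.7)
The plan is to derive this Poincar\'e--Sobolev inequality with a subset mean by combining the classical Sobolev--Poincar\'e estimate relative to the full-domain mean $u_\Omega:=|\Omega|^{-1}\int_\Omega u\,dx$ with a H\"older-type control on the discrepancy $|u_\Omega-u_S|$ between the two averages.

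First, I would invoke the classical Sobolev--Poincar\'e inequality for the Lipschitz domain $\Omega$: for $1\leq p<d$, one has $\|u-u_\Omega\|_{L^{p^*}(\Omega)}\leq C\|\nabla u\|_{L^p(\Omega)}$ with a scale-invariant constant $C=C(d)$. This is standard and can be proved via the Lipschitz extension operator together with the Sobolev embedding on $\mathbb{R}^d$, or alternatively via an integral representation $u(x)-u_\Omega=\int_\Omega K(x,y)\nabla u(y)\,dy$ with a Riesz-potential-type kernel followed by Hardy--Littlewood--Sobolev.

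Next, I would control $|u_\Omega-u_S|$ by writing $u_\Omega-u_S=|S|^{-1}\int_S(u_\Omega-u)\,dx$ and applying H\"older's inequality with conjugate exponents $p$ and $p/(p-1)$, obtaining $|u_\Omega-u_S|\leq|S|^{-1/p}\|u-u_\Omega\|_{L^p(\Omega)}$. The classical $L^p$ Poincar\'e inequality $\|u-u_\Omega\|_{L^p(\Omega)}\leq cD\|\nabla u\|_{L^p(\Omega)}$ (linear in the diameter $D$) then yields $|u_\Omega-u_S|\leq cD|S|^{-1/p}\|\nabla u\|_{L^p(\Omega)}$. Combining via the triangle inequality
\[
\|u-u_S\|_{L^{p^*}(\Omega)}\leq\|u-u_\Omega\|_{L^{p^*}(\Omega)}+|u_\Omega-u_S|\,|\Omega|^{1/p^*}
\]
and the elementary estimate $|\Omega|\leq\alpha_d D^d$ (where $\alpha_d$ is the volume of the unit ball in $\mathbb{R}^d$), so that $|\Omega|^{1/p^*}\leq CD^{d/p^*}$, gives a bound of the stated form after absorbing the first term into the second using $|S|\leq|\Omega|\leq\alpha_d D^d$.

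The main obstacle is tracking the precise power of $D$ in the prefactor so that it matches $D^{d+1-d/p}$ as stated (the na\"ive two-step argument above naturally produces $D^{1+d/p^*}=D^{d/p}$). A cleaner route that yields the target exponent is the direct integral representation
\[
u(x)-u_S=\frac{1}{|S|}\int_S\int_0^1\nabla u(x+t(y-x))\cdot(y-x)\,dt\,dy,
\]
valid on convex subdomains (the general Lipschitz case reducing to this via a finite chain of overlapping balls covering $\Omega$); after a change of variable $z=x+t(y-x)$ one arrives at the pointwise Riesz-potential bound
\[
|u(x)-u_S|\leq\frac{CD^d}{|S|}\int_\Omega\frac{|\nabla u(y)|}{|x-y|^{d-1}}\,dy.
\]
The Hardy--Littlewood--Sobolev inequality then converts this into the $L^{p^*}$ bound on the left-hand side, and a H\"older redistribution between the $D^d$ and $|S|^{-1}$ factors (using $|S|\leq\alpha_d D^d$) yields the desired $D^{d+1-d/p}|S|^{-1/p}$ form; this is the route taken in the cited reference \cite{gn1983}.
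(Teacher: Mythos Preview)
The paper does not give its own proof of this lemma; it is listed in the appendix of preliminary tools with a bare citation to \cite{gn1983} and no argument, so there is nothing in the paper to compare your sketch against.

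On the sketch itself: the Riesz-potential route is the standard one, and your pointwise bound $|u(x)-u_S|\leq CD^d|S|^{-1}I_1(|\nabla u|)(x)$ (for convex $\Omega$, then extended) followed by Hardy--Littlewood--Sobolev is correct and yields the prefactor $D^d/|S|$. Your final ``H\"older redistribution using $|S|\leq\alpha_d D^d$'' does not work in the direction you need, however: to pass from $D^d/|S|$ to $D^{d+1-d/p}/|S|^{1/p}$ as an upper bound one would need $D^{d-p}\leq C|S|^{p-1}$, which fails for small $|S|$. In fact the printed prefactor $D^{d+1-d/p}|S|^{-1/p}$ is not scale-invariant---under $x\mapsto\lambda x$ both $\|u-u_S\|_{L^{p^*}}$ and $\|\nabla u\|_{L^p}$ scale by $\lambda^{d/p-1}$, whereas this prefactor scales by $\lambda^{d+1-2d/p}$---so the exponent of $D$ in the lemma as stated appears to be a transcription slip from the cited source rather than a defect in your argument.
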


\begin{lemma}[Gagliardo-Nirenberg-Sobolev inequality, cf. \cite{N}]\label{t9}
	Let $d\geq1$, $q,r$ satisfy $1\leq q,r \leq \infty$ and $j,m\in \mathbb{Z}^+$
satisfy $0\leq j<m$. For any $f\in \mathcal{C}_0^{\infty}(\mathbb{R}^d)$, we then have
	\begin{equation*}
	\|D^jf\|_{L^p(\mathbb{R}^d)}\leq
C\|D^mf\|_{L^r(\mathbb{R}^d)}^{\alpha}\|f\|_{L^q(\mathbb{R}^d)}^{1-\alpha},
	\end{equation*}
	where
$\frac{1}{p}-\frac{j}{d}=\alpha(\frac{1}{r}-\frac{m}{d})+(1-\alpha)\frac{1}{q}$, $\frac{j}{m}\leq\alpha\leq1$
and $C>0$ depends on $m,d,j,q,r,\alpha$. 

\end{lemma}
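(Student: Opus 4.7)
The plan is to follow Nirenberg's classical three-step scheme; since the statement is the standard Gagliardo--Nirenberg--Sobolev inequality, the task is really to recall how the proof assembles from two endpoint estimates and a scaling argument.

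\textbf{Step 1 (Necessity via scaling).} First I would check that the identity $\frac{1}{p}-\frac{j}{d}=\alpha\bigl(\frac{1}{r}-\frac{m}{d}\bigr)+(1-\alpha)\frac{1}{q}$ is forced by dilation invariance. Applying the inequality to $f_{\lambda}(x):=f(\lambda x)$, both sides must scale identically in $\lambda>0$; equating the powers of $\lambda$ gives precisely the displayed constraint. This confirms the exponent condition and shows that any constant $C$ obtained must be dimensionless.

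\textbf{Step 2 (The two endpoint estimates).} The inequality reduces to two base cases.
\emph{(a)} The pure Sobolev embedding ($j=0$, $m=1$, $\alpha=1$, $1\le r<d$):
\[
\|f\|_{L^{dr/(d-r)}(\mathbb{R}^d)}\le C\|\nabla f\|_{L^{r}(\mathbb{R}^d)}.
\]
For $r=1$ this follows from the pointwise bound $|f(x)|\le \int_{\mathbb{R}}|\partial_i f|\,dx_i$ combined with the Loomis--Whitney inequality after taking the geometric mean over $i=1,\dots,d$; the case $r>1$ is obtained by applying the $r=1$ version to $|f|^{\gamma}$ for an appropriate $\gamma$ and invoking H\"older.
\emph{(b)} The Landau--Kolmogorov--Hadamard interpolation ($\alpha=j/m$):
\[
\|D^j f\|_{L^{p}(\mathbb{R}^d)}\le C\|f\|_{L^{q}(\mathbb{R}^d)}^{1-j/m}\|D^m f\|_{L^{r}(\mathbb{R}^d)}^{j/m},
\]
where $\tfrac{1}{p}=\tfrac{j}{m}\tfrac{1}{r}+(1-\tfrac{j}{m})\tfrac{1}{q}$. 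This is proved by induction on $m$: for $m=2$, integration by parts gives $\int|Df|^{p}\le C\int|f|\,|D^{2}f|\,|Df|^{p-2}$ plus lower-order terms, and H\"older closes the loop; the general case follows by iterating and applying the bound to $D^{j-1}f$, $D^{j}f$, $D^{j+1}f$ chain by chain.

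\textbf{Step 3 (Interpolation to general $\alpha$).} For $j/m\le\alpha\le 1$, I would write $\alpha=\theta\cdot 1+(1-\theta)\cdot\tfrac{j}{m}$ with $\theta\in[0,1]$, apply (a) to the $j$-th derivative in the form $\|D^j f\|_{L^{p_1}}\le C\|D^{m} f\|_{L^{r}}$ (valid for a unique exponent $p_1$ dictated by Step~1 with $\alpha=1$), apply (b) with $\alpha=j/m$ to produce the second exponent $p_0$, and then interpolate via H\"older's inequality between $L^{p_0}$ and $L^{p_1}$ to land in $L^p$ with the prescribed $\alpha$. The scaling identity in Step~1 guarantees the H\"older exponent balances correctly.

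\textbf{Main obstacle.} The delicate part is the boundary behaviour. The endpoint Sobolev estimate fails when $(m-j)r=d$, and the $r=\infty$ or $q=\infty$ limits must be handled by approximation (e.g.\ mollification and truncation, staying within $\mathcal{C}_0^\infty(\mathbb{R}^d)$). The restriction $j/m\le \alpha\le 1$ in the statement already eliminates the genuinely exceptional cases (e.g.\ the BMO-type borderline that appears when one tries to push $\alpha$ below $j/m$), but one still needs to keep the constant from blowing up as exponents approach these thresholds; I would address this by choosing an admissible $\theta\in[0,1]$ that stays away from the forbidden endpoints whenever the hypotheses on $(p,q,r,j,m,d)$ permit.
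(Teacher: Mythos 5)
The paper does not prove this lemma at all --- it is quoted as a classical preliminary tool in Appendix C with a citation to Nirenberg's original paper --- so there is no in-paper argument to compare against. Your three-step outline (dilation scaling to force the exponent relation, the two endpoint estimates given by the Sobolev embedding via the $L^1$/Loomis--Whitney argument and the Landau--Kolmogorov interpolation at $\alpha=j/m$, then H\"older interpolation in $L^p$ between the two endpoints to reach general $\alpha\in[j/m,1]$) is precisely the standard Nirenberg scheme, and as a sketch it is sound, including your correct identification of the critical cases $(m-j)r=d$ and the $r=\infty$, $q=\infty$ limits as the only genuinely delicate points.
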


We recall the classical Aubin-Lions-Simon compactness lemma.

\begin{lemma}[Aubin-Lions-Simon compactness lemma, cf.~\cite{simon1986}]\label{lemmaAubin}
Let $\Omega\subset \mathbb{R}^d$ ($d\geq1$) be a bounded domain with $\partial\Omega \in \mathcal{C}^{0,1}$. Let the spaces $X, Y$ and the Banach space $B$ be defined on $\Omega$ and satisfy that $X$ embeds compactly in $B$, which in turn embeds continuously in $Y$.  For some $1\leq p,r\leq \infty$ such that $p<\infty$ or $r>1$, assume that the sequence $\{f^\varepsilon\}_{0<\varepsilon<1}$ is uniformly bounded in $L^p(0,T;X)$ and $\{\partial_t f^\varepsilon\}_{0<\varepsilon<1}$ is uniformly bounded in $L^r(0,T;Y)$. Then 
\begin{itemize}
    \item $\{f^\varepsilon\}_{0<\varepsilon<1}$ is relatively compact in $L^{p}(0,T;B)$.
    \item If $p=\infty$ and $r>1$, then $\{f^\varepsilon\}_{0<\varepsilon<1}$ is relatively compact in $\mathcal{C}([0,T];B)$.
\end{itemize}
\end{lemma}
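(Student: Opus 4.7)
The plan is to prove the Aubin–Lions–Simon compactness lemma following Simon's classical 1986 argument. The proof combines two ingredients: an Ehrling-type interpolation inequality, which exploits the compact embedding $X\Subset B$ to trade a little spatial regularity for the weaker norm $Y$, and a uniform estimate on time translations in $Y$ derived from the bound on $\partial_t f^\var$. I will interpret the assumption in the standard way, namely that $\{f^\var\}$ is uniformly bounded in $L^p(0,T;X)$ (with $X\hookrightarrow B$ supplying the corresponding bound in $L^p(0,T;B)$).

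First I would establish the Ehrling lemma: for every $\eta > 0$, there exists $C_\eta > 0$ such that $\|u\|_B \leq \eta \|u\|_X + C_\eta \|u\|_Y$ for all $u \in X$. The proof is by contradiction: if no such $C_\eta$ existed for some $\eta_0 > 0$, one could find $\{u_n\}\subset X$ with $\|u_n\|_B = 1$, $\|u_n\|_X \leq 1/\eta_0$, and $\|u_n\|_Y \to 0$; the compact embedding $X\Subset B$ yields a convergent subsequence in $B$, whose limit must vanish since $B\hookrightarrow Y$, contradicting $\|u_n\|_B = 1$.

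Next I would derive the key time-translation estimate. For $0 < h < T$ and $t \in [0, T-h]$, Bochner calculus applied to $\partial_t f^\var \in L^r(0,T;Y)$ yields $\|f^\var(t+h) - f^\var(t)\|_Y \leq \int_t^{t+h}\|\partial_s f^\var(s)\|_Y\, ds$. Combining with Ehrling gives
\begin{equation*}
\|f^\var(t+h) - f^\var(t)\|_B \leq \eta\bigl(\|f^\var(t+h)\|_X + \|f^\var(t)\|_X\bigr) + C_\eta\int_t^{t+h}\|\partial_s f^\var(s)\|_Y\, ds.
\end{equation*}
Raising to the $p$-th power, integrating in $t$, and using the uniform $L^p(0,T;X)$ bound together with H\"older's inequality (for $r>1$, producing a factor $h^{1-1/r}$) or Fubini (for $r=1$ with $p<\infty$, producing a factor $h$), I obtain the uniform equi-continuity
\begin{equation*}
\sup_\var\int_0^{T-h}\|f^\var(t+h) - f^\var(t)\|_B^p\, dt \longrightarrow 0 \quad \text{as } h \to 0^+,
\end{equation*}
after choosing $\eta = \eta(h) \to 0$ at a suitable rate (so that $C_{\eta(h)}$ times the vanishing $h$-factor still tends to zero).

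The first bullet then follows from Simon's Fr\'echet–Kolmogorov criterion in Bochner spaces: a subset of $L^p(0,T;B)$ is relatively compact provided it is (i) uniformly bounded in $L^p(0,T;B)$, (ii) satisfies the vanishing time-translation property above, and (iii) its time averages $\bigl\{\int_{t_1}^{t_2} f^\var(s)\, ds\bigr\}$ are relatively compact in $B$ for every $0 \leq t_1 < t_2 \leq T$. Conditions (i) and (iii) follow from the uniform $L^p(0,T;X)$ bound combined with the continuous embedding $X\hookrightarrow B$ and the compact embedding $X\Subset B$ respectively (after applying Bochner's inequality to the time averages). For the second bullet ($p=\infty$, $r>1$), the time-translation estimate rephrases as uniform H\"older continuity in $\mathcal{C}([0,T];B)$, while pointwise relative compactness in $B$ at each $t$ is furnished by the $L^\infty(0,T;X)$ bound together with Ehrling, and Arzel\`a–Ascoli concludes. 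The main technical obstacle is the endpoint analysis: when $r=1$ one has $h^{1-1/r}=1$, which does not vanish pointwise, so one must instead use the Fubini-based averaging trick on the $L^p$ time-integral to recover smallness; a secondary subtlety is that $B$ is only assumed to be Banach without further geometric structure, which forces reliance on Simon's Bochner-averaging criterion rather than a direct vector-valued Kolmogorov–Riesz statement.
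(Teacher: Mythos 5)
The paper does not prove this lemma: it is quoted verbatim as a classical preliminary result with the citation to Simon's 1986 paper, so there is no in-paper argument to compare yours against. Your reconstruction is the standard Simon proof and is essentially correct: the Ehrling interpolation inequality, the Bochner time-translation estimate in $Y$, the $\eta$-then-$h$ limiting order (with H\"older for $r>1$ and the Fubini averaging trick at the endpoint $r=1$), and the reduction to Simon's compactness criterion via relative compactness of the time averages in $B$ are exactly the right ingredients, and your handling of the $p=\infty$, $r>1$ case via Arzel\`a--Ascoli is also sound. One point worth highlighting: you correctly identified that the hypothesis as printed in the paper (uniform boundedness in $L^p(0,T;B)$) cannot be what is meant --- a time-independent sequence bounded in $B$ with no convergent subsequence and zero time derivative satisfies all the stated hypotheses yet is not compact in $L^p(0,T;B)$ --- so the bound must be taken in $L^p(0,T;X)$, which is how the lemma is actually applied throughout the paper (e.g.\ with $X=H^1_{\rm loc}$, $B=L^2_{\rm loc}$, $Y=H^{-s}$). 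Two cosmetic imprecisions that do not affect validity: the translation estimate gives H\"older continuity in $Y$ but only uniform equicontinuity (not H\"older continuity) in $B$ after the Ehrling step, and the pointwise relative compactness in $B$ at each $t$ in the second bullet follows directly from the compact embedding $X\Subset B$ without needing Ehrling.
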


The Dubinski\"i compactness lemma is useful for proving the compactness of $n$ with nonlinear diffusion.

\begin{lemma}[Dubinski\"i compactness lemma,  cf.~\cite{Dunbinski, carrillo2012cross}]\label{lemmaDub} Let $\Omega\subset \mathbb{R}^d$ {\rm(}$d\geq1${\rm)} be a bounded domain with $\partial\Omega \in C^{0,1}$. Assume that the sequence $\{f^\varepsilon\}_{0<\varepsilon<1}$ satisfies
$$
\|\partial_t f_\varepsilon\|_{L^1(0,T; H^{-s}(\Omega))}+\|f_\var^p\|_{L^  q(0,T;H^1(\Omega))}\leq C,
$$
for some $p, q\geq 1$, $s>0$ and constant $C>0$ independent of $\var$. Then $\{f^\varepsilon\}_{0<\varepsilon<1}$ is relatively compact in $L^{pl}(0,T;L^r(\Omega))$ for any $r<\infty$ and $l<q$.
 \end{lemma}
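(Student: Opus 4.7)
The goal is to establish relative compactness of $\{f_\var\}$ in $L^{pl}(0,T;L^r(\Omega))$. The essential difficulty is a regularity mismatch: the hypothesis provides spatial $H^1$-control only on the nonlinear image $f_\var^p$, while the time-derivative bound lives at the level of $f_\var$ itself, so classical Aubin--Lions does not apply directly. My plan is to apply the Kolmogorov--Riesz--Fr\'echet compactness criterion in the space-time sense, for which it suffices to verify uniform boundedness of $\{f_\var\}$ in $L^{pl}(0,T;L^r(\Omega))$ together with uniform vanishing of spatial and temporal translations. The bridge between the two regularity levels will be the elementary inequality $|a-b|^p\leq|a^p-b^p|$ valid for all $a,b\geq 0$ and $p\geq 1$, which follows from the super-additivity of $t\mapsto t^p$ on $[0,\infty)$ and is exactly what makes Dubinskii-type arguments succeed for the degenerate nonlinear diffusion structure.

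\textbf{Spatial side.} From $\|f_\var^p\|_{L^q(0,T;H^1(\Omega))}\leq C$ and the Sobolev embedding $H^1(\Omega)\hookrightarrow L^{r_0}(\Omega)$ (any $r_0<\infty$ if $d\leq 2$, $r_0<\frac{2d}{d-2}$ if $d\geq3$), one deduces $\|f_\var\|_{L^{pq}(0,T;L^{pr_0}(\Omega))}\leq C^{1/p}$, which combined with boundedness of $\Omega$ and H\"older's inequality in time yields a uniform $L^{pl}(0,T;L^r(\Omega))$ bound for all $l<q$ and $r$ in the admissible range dictated by the embedding. For spatial equi-continuity, the compact embedding $H^1(\Omega)\hookrightarrow\hookrightarrow L^{r_0}(\Omega)$ combined with the $L^q(0,T;H^1)$-bound on $f_\var^p$ yields, via the classical Kolmogorov--Riesz criterion applied fibre-wise in time,
\[\sup_\var\|\tau_h f_\var^p - f_\var^p\|_{L^q(0,T;L^{r_0}(\Omega))}\longrightarrow 0\quad\text{as}\ |h|\to 0,\]
where $\tau_h$ denotes translation by $h\in\mathbb{R}^d$. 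Applying $|a-b|^p\leq|a^p-b^p|$ pointwise, raising to the $r_0$-th power, and integrating in space and time then produces
\[\|\tau_h f_\var - f_\var\|_{L^{pq}(0,T;L^{pr_0}(\Omega))}\leq \|\tau_h f_\var^p - f_\var^p\|_{L^q(0,T;L^{r_0}(\Omega))}^{1/p}\to 0\]
uniformly in $\var$, which transfers spatial equi-continuity from $\{f_\var^p\}$ to $\{f_\var\}$ in the target space $L^{pl}(L^r)$.

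\textbf{Temporal side and conclusion.} The hypothesis $\|\partial_t f_\var\|_{L^1(0,T;H^{-s}(\Omega))}\leq C$ directly yields
\[\|f_\var(\cdot+\tau) - f_\var(\cdot)\|_{L^1(0,T-\tau;H^{-s}(\Omega))}\leq C\tau\to 0\quad\text{as}\ \tau\to 0,\]
uniformly in $\var$, but only in the low-regularity norm $H^{-s}$. To upgrade to the target $L^r$ norm in space and $L^{pl}$ norm in time, I would interpolate against the spatial $L^{pr_0}$-bound from the previous step via an inequality of the form $\|v\|_{L^r(\Omega)}\leq C\|v\|_{H^{-s}(\Omega)}^{\theta}\|v\|_{L^{pr_0}(\Omega)}^{1-\theta}$ for a suitable $\theta\in(0,1)$, combined with H\"older's inequality in time. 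The loss of integrability in time that this interpolation entails is precisely what forces the restriction $l<q$. With boundedness, spatial equi-continuity, and temporal equi-continuity all verified uniformly in $\var$, the space-time Kolmogorov--Riesz--Fr\'echet criterion yields the asserted relative compactness in $L^{pl}(0,T;L^r(\Omega))$.

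\textbf{Main obstacle.} The crux of the argument is the mismatch between the regularity carried by $f_\var^p$ (spatial $H^1$) and that carried by $f_\var$ (temporal $\partial_t$): a straightforward Aubin--Lions would require one of the two to be upgraded, and neither is \emph{a priori} available. The nonnegativity of $f_\var$ and the algebraic inequality $|a-b|^p\leq|a^p-b^p|$ resolve this by allowing the $p$-th power to be ``undone'' at the level of moduli of continuity, thereby bridging the two regularity levels. Secondary care is needed in the exponent tracking so as to reach the full stated range of $r$ and $l$; in dimensions $d\geq 3$ this requires combining the Sobolev range $r_0<2d/(d-2)$ with the exponent inflation from the $p$-th power, and the interpolation between $H^{-s}$ and $L^{pr_0}$ in the temporal step must be chosen consistently with both Hölder in time and the final target space.
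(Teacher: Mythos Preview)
The paper does not prove this lemma; it is recorded in Appendix~\ref{appendix2} as a preliminary tool with citations to \cite{Dunbinski,carrillo2012cross} and is invoked as a black box in Section~\ref{section2} to extract the strong convergence \eqref{strong2} of the density~$n_\var$. There is therefore no in-paper proof to compare your attempt against.

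Your outline follows the standard Dubinski\u{\i} strategy and is sound in its essentials. The pointwise inequality $|a-b|^p\le|a^p-b^p|$ for $a,b\ge0$ and $p\ge1$ is exactly the mechanism that lets one pull spatial oscillation control from $f_\var^p$ back to $f_\var$, and the interpolation between the $H^{-s}$ time-shift estimate and the $L^{pr_0}$ spatial bound (an Ehrling-type step in the cited references) is the correct way to obtain temporal equicontinuity; the loss in the time exponent is indeed what forces $l<q$. Two remarks are in order. First, you tacitly assume $f_\var\ge0$ so that $f_\var^p$ is well defined and the algebraic inequality applies; this is not stated in the lemma but is satisfied in the paper's application ($f_\var=n_\var$ is a nonnegative density). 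Second, your own caveat about the range of $r$ in dimensions $d\ge3$ is justified: from $H^1(\Omega)\hookrightarrow L^{2d/(d-2)}(\Omega)$ one only reaches $r<\tfrac{2pd}{d-2}$, not every $r<\infty$. This appears to be a mild imprecision in the lemma as quoted rather than a defect in your argument, and it is harmless for the paper's purposes since only $L^2_{\rm loc}$ compactness is actually used.
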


\end{appendices}

\noindent \textbf{Acknowledgments} 
We are very thankful to the referees for carefully reading
our paper and giving us insightful suggestions/comments,
which greatly improve the  presentation of our manuscript. L.-Y. Shou is supported by National Natural Science Foundation of China (Grant No. 12301275). L. Wu is partially supported by National Natural Science Foundation of China (Grant No. 12401133) and the Guangdong Basic and Applied Basic Research Foundation (2025B151502069).

\vspace{2mm}

\noindent \textbf{Conflict of interest.} The authors do not have any possible conflicts of interest.

\vspace{2mm}

\noindent \textbf{Data availability statement.}
 Data sharing is not applicable to this article, as no datasets were generated or analyzed during the current study.

\bibliographystyle{abbrv} 
\parskip=0pt
\small
\bibliography{MQBH_bib}

\vspace{4mm}
\newpage

\noindent(Q. He)\par\nopagebreak
\noindent\textsc{Sorbonne Universit{\'e}, CNRS, Laboratoire Jacques-Louis Lions, F-75005 Paris, France.}

\noindent Email address: {\tt qyhe.cnu.math@qq.com}

\vspace{3ex}

\noindent (L.-Y. Shou)\par\nopagebreak
\noindent\textsc{School of Mathematical Sciences, Ministry of Education Key Laboratory of NSLSCS, and Key Laboratory of Jiangsu Provincial Universities of FDMTA, Nanjing Normal University, Nanjing, 210023, P. R. China}

\noindent $^{*}$Corresponding author. Email address: {\tt shoulingyun11@gmail.com}

\vspace{3ex}

\noindent(L. Wu)\par\nopagebreak
\noindent\textsc{School of Mathematics, South China University of Technology,
Guangzhou, 510640, P. R. China}

\noindent Email address: {\tt leyunwu@scut.edu.cn}

\end{document}